\newcommand{\A}{\mathcal{A}}
\newcommand{\SL}{\mathcal{SL}}
\newcommand{\E}{\mathcal{E}}
\newcommand{\dist}{\textup{dist}}
\newtheoremstyle{the}{13pt}{12pt}{\it}{}{\bfseries}{.}{ }{}
\newtheoremstyle{def}{13pt}{12pt}{}{}{\bfseries}{.}{ }{}
\newtheoremstyle{rem}{13pt}{12pt}{}{}{\it}{.}{ }{}
\newtheoremstyle{cla}{13pt}{12pt}{\it}{}{$\quad$}{.}{ }{}
\theoremstyle{the} \newtheorem{theorem}{Theorem}[section]}
\theoremstyle{the} \newtheorem{lemma}[theorem]{Lemma}}
\theoremstyle{the} \newtheorem{proposition}[theorem]{Proposition}}
\theoremstyle{the} \newtheorem{corollary}[theorem]{Corollary}}
\theoremstyle{the} \newtheorem*{conjecture}{Conjecture}}
\theoremstyle{the} \newtheorem{observation}[theorem]{Observation}}
\theoremstyle{def} \newtheorem{definition}[theorem]{Definition}}
\theoremstyle{def} }
\theoremstyle{rem} \newtheorem*{remark}{Remark}}
\theoremstyle{rem} }
\theoremstyle{cla} \newtheorem{claim}{Claim}}
\newenvironment{proofclaim}[1][Proof]{\begin{proof}[#1]}{\end{proof}}
\tikzstyle{noeud}=[circle,inner sep=1, minimum size =1 pt, line width = 1pt, draw=black, fill=black]
\newcommand{\convexpath}[2]{
[   
    create hullnodes/.code={
        \global\edef\namelist{#1}
        \foreach [count=\counter] \nodename in \namelist {
            \global\edef\numberofnodes{\counter}
            \node at (\nodename) [draw=none,name=hullnode\counter] {};
        }
        \node at (hullnode\numberofnodes) [name=hullnode0,draw=none] {};
        \pgfmathtruncatemacro\lastnumber{\numberofnodes+1}
        \node at (hullnode1) [name=hullnode\lastnumber,draw=none] {};
    },
    create hullnodes
]
($(hullnode1)!#2!-90:(hullnode0)$)
\foreach [
    evaluate=\currentnode as \previousnode using \currentnode-1,
    evaluate=\currentnode as \nextnode using \currentnode+1
    ] \currentnode in {1,...,\numberofnodes} {
-- ($(hullnode\currentnode)!#2!-90:(hullnode\previousnode)$)
  let \p1 = ($(hullnode\currentnode)!#2!-90:(hullnode\previousnode) - (hullnode\currentnode)$),
    \n1 = {atan2(\y1,\x1)},
    \p2 = ($(hullnode\currentnode)!#2!90:(hullnode\nextnode) - (hullnode\currentnode)$),
    \n2 = {atan2(\y2,\x2)},
    \n{delta} = {-Mod(\n1-\n2,360)}
  in 
    {arc [start angle=\n1, delta angle=\n{delta}, radius=#2]}
}
-- cycle
}
\title{The Avoider-Enforcer game on hypergraphs of rank 3\thanks{This research was partly supported by the ANR project P-GASE (ANR-21-CE48-0001-01)}}
\author[1]{Florian Galliot}
\author[2]{Valentin Gledel}
\author[3]{Aline Parreau}
\date{March 2025}
\affil[1]{Aix-Marseille Université, CNRS, Centrale Marseille, I2M, UMR 7373, 13453 Marseille, France}
\affil[2]{Université Savoie Mont Blanc, CNRS UMR5127, LAMA, Chambéry, F-73000, France}
\affil[3]{Univ Lyon, CNRS, INSA Lyon, UCBL, Centrale Lyon, Univ Lyon 2, LIRIS, UMR5205, F-69622 Villeurbanne, France.}
\begin{document}

\maketitle

\begin{abstract}
    In the Avoider-Enforcer convention of positional games, two players—Avoider and Enforcer—take turns selecting vertices from a hypergraph $H$. Enforcer wins if, by the time all vertices of $H$ have been selected, Avoider has completely filled an edge of $H$ with her vertices; otherwise, Avoider wins. In this paper, we first give some general results, in particular regarding the outcome of the game and disjoint unions of hypergraphs. We then determine which player has a winning strategy for all hypergraphs of rank~2, and for linear hypergraphs of rank~3 when Avoider plays the last move. The structural characterisations we obtain yield polynomial-time algorithms.
\end{abstract}

\section{Introduction}\strut
\indent Positional games have been introduced by Hales and Jewett \cite{Hales1963} and popularised by Erd\H{o}s and Selfridge~\cite{erdos}. The game board is a hypergraph $H$ with vertex set $V(H)$ and edge set $E(H)$. Two players take turns picking a previously unpicked vertex of the hypergraph. 
There are four main {\em conventions} for determining the winner.
%Players can either want to complete a edge (achievement games), or, on the contrary, not want to complete one (avoidance games).
In {\em strong games}, both players have the same objective: in the {\em Maker-Maker} convention, whoever fills an edge (i.e. picks all its vertices) wins the game, whereas in the {\em Avoider-Avoider} convention, whoever first fills an edge loses the game. In {\em weak games}, the players' objectives are complementary: in the {\em Maker-Breaker} convention, Maker aims at filling an edge while Breaker aims at preventing her from doing so, whereas in the {\em Avoider-Enforcer} convention, Avoider aims at avoiding filling an edge while Enforcer aims at forcing her to do so. The Maker-Maker and Maker-Breaker conventions characterise {\em achievement games}, whereas the Avoider-Avoider and Avoider-Enforcer conventions characterise {\em avoidance games}. Since all these conventions define finite two-player games with perfect information, either there exists a winning strategy for one of the players, or optimal play leads to a draw (this can only happen in strong games).

The first convention to be introduced was the Maker-Maker convention \cite{Hales1963}, which is the most natural one since it models the famous game of Tic-Tac-Toe. It can be noted, using a strategy-stealing argument \cite{Hales1963}, that there are no situations where the players would rather pass their turn, and therefore either the first player has a winning strategy or optimal play leads to a draw. Apart from that, few general results are known, as this convention is hard to tackle. There is no monotonicity (adding edges may harm the first player), and it is not known how to compute a winning strategy for the first player on a disjoint union of two games given a winning strategy on each of them. These difficulties are mainly due to the fact that the players pursue two goals at once: filling an edge, and preventing the opponent to fill one first.

This is the reason why the Maker-Breaker convention was introduced by Chv\'atal and Erd\H{o}s  \cite{Chvatal}. Constructive results are generally easier to achieve compared to the Maker-Maker convention: for instance, it is very easy to design optimal strategies on a disjoint union of two games from optimal strategies on each separate game. These games are monotonous, since adding an edge can clearly only benefit Maker. Some other general results are known, such as the ``Erd\"os-Selfridge criterion" or ``pairing strategies" which give sufficient conditions for Breaker to have a winning strategy. The Maker-Breaker convention is actually by far the most studied (see for example the books \cite{beck,positionalgames}). 
Maker-Breaker games on hypergraphs of small rank have been much investigated. Hypergraphs of rank at most 2 are trivially solved. Hypergraphs of rank 3 were first studied in the linear case (i.e. when any two edges share at most one vertex) by Kutz, who obtained a polynomial-time algorithm to decide which player has a winning strategy \cite{kutz}. This result has been recently extended to all hypergraphs of rank 3 by Galliot, Gravier and Sivignon \cite{MBrank3}. On the other hand, it has been known since 1978 that deciding the winner of a Maker-Breaker game is PSPACE-complete even if the edges have size 11 \cite{schaefer}. This result has since then been improved to edges of size~6~\cite{Rahman20206UniformMG} and very recently to edges of size~5~\cite{MBrank5}.

The other two conventions, which revolve around avoiding filling an edge, have received less attention but are now being increasingly studied \cite{HgameAE,gledeloijid,milos2,milos1}.
%\florian{je n'ai gardé que les ref $\geq 2019$ pour le côté "engouement récent"... les autres ref sur les avoidance games sont \cite{AEgames,AEdisjoint,FastAE,starAE,acyclicAE,Hefetz2007} si jamais vous voulez inclure du plus vieux dans la liste (toutes sont déjà citées ailleurs dans notre papier)}.
The way in which avoidance games are played is quite different. Indeed, in these conventions, players would often rather pass than pick a vertex. As a consequence, there are cases where the second player has a winning stragegy. All in all, methods used for achievement games mostly do not transfer to avoidance games. As an illustration, it took until 2023 to prove that deciding which player has a winning strategy is PSPACE-complete for the Avoider-Enforcer convention, over 40 years later than for the Maker-Breaker convention.
Among the two avoidance conventions, Avoider-Avoider games, being strong games, present a similar predicament as Maker-Maker games. Deciding which player (if any) has a winning strategy on a hypergraph of rank 2 is already PSPACE-complete in this convention \cite{colpspace2}.

This paper focuses on the Avoider-Enforcer convention, which owes its first general formulation to Lu \cite{Lu}. Since this is a weak convention i.e. the two players have complementary goals, one can hope that these games satisfy some convenient general properties, similar to those of Maker-Breaker games (despite the differences in the way these games are played).
For example, it is already known that the Erd\"os-Selfridge criterion can be adapted to the Avoider-Enfocer convention~\cite{AEgames}. Furthermore, pairing strategies have been used to prove the PSPACE-completeness of Avoider-Enforcer games \cite[Lemma 6]{gledeloijid}. That last paper illustrates how complexity proofs for Maker-Breaker games can often be adapted to Avoider- Enforcer games.
Beside these works, the Avoider-Enforcer convention seems to have been studied mostly for specific games \cite{FastAE,HgameAE,acyclicAE,starAE} and not for general hypergraphs, except those with pairwise-disjoint edges which are entirely solved \cite{AEdisjoint}.

\

{\bf Our contribution.} The aim of this paper is to give general properties of Avoider-Enforcer games and study this convention in hypergraphs of small rank.
An important property that we establish is that, in contrast with the Maker-Breaker convention, the relevant distinction to make when studying Avoider-Enforcer games is based on who plays the last move of the game (when all vertices are picked) rather than who plays the first move. Indeed, any player that has a winning strategy playing last also has a winning strategy playing second-to-last. This result leads us to define the {\em outcome} of a hypergraph as the identity of the player that has a winning strategy in each case (Avoider playing last or Enforcer playing last). We then show that the outcome of a disjoint union of games is uniquely determined by the outcomes of its components.
We then consider hypergraphs of small rank. For hypergraphs of rank 2, we give a structural characterisation of the outcome. As for hypergraphs of rank 3, we initiate their study by investigating the linear case, which was the first case to be solved in the Maker-Breaker convention \cite{kutz}. We completely solve the subcase where Avoider plays last, by providing a structural charaterisation of hypergraphs on which Avoider has a winning strategy playing last. All our structural characterisations can straightforwardly be recognised in polynomial time.
Definitions and general results are given in Section \ref{sec:general}. Section \ref{sec:rank2} is dedicated to hypergraphs of rank 2, and Section \ref{section_rank3} to linear hypergraphs of rank 3 when Avoider is the last player.

\section{General results on the Avoider-Enforcer game}\label{sec:general}
\subsection{Definitions and notations}

\paragraph{Hypergraphs.} Let $H$ be a hypergraph. We denote its vertex set by $V(H)$ and its edge set by $E(H)\subseteq 2^{V(H)}$. We say that $H$ is {\em even} (resp. {\em odd}) if it has an even (resp. odd) number of vertices. A {\em neighbour} of a vertex $x$ in $H$ is a vertex $y \neq x$ such that some edge of $H$ contains both $x$ and $y$. The {\em degree} of a vertex in $H$ is the number of edges it belongs to. A vertex is called {\em isolated} if it has degree 0. A {\em leaf-edge} of $H$ is an edge with at most one vertex of degree greater than 1. A {\em subhypergraph} of $H$ is a hypergraph $H'$ such that $V(H') \subseteq V(H)$ and $E(H') \subseteq E(H)$. A {\em transversal} of $H$ is a set of vertices that intersects every edge of $H$. \newline
Let $k \in \mathbb{N}$. An edge is a {\em $k$-edge} if it contains exactly $k$ vertices.
A hypergraph has {\em rank $k$} if all its edges have at most $k$ vertices, and it is {\em $k$-uniform} if every edge contains exactly $k$ vertices.  \newline
A hypergraph is {\em linear} if any two distinct edges share at most one vertex. A {\em linear tree} (following the definition from \cite{lineartree}) is recursively defined as a hypergraph that either is an isolated vertex or can be obtained from a linear tree $T$ by adding an edge that shares exactly one vertex with $T$. \newline
A {\em walk} in a hypergraph $H$ is a sequence $(e_1,\ldots,e_{\ell})$ of edges of $H$ such that $e_i \cap e_{i+1} \neq \varnothing$ for all $1 \leq i \leq \ell-1$. We say a walk $(e_1,\ldots,e_{\ell})$ is {\em simple} if $e_i \cap e_j = \varnothing$ for all $1 \leq i,j \leq \ell$ such that $|i-j|\geq 2$. 
%We say a walk $(e_1,\ldots,e_{\ell})$ is {\em linear} if $|e_i \cap e_j| \leq 1$ for all $1 \leq i \neq j \leq \ell$. 
We say a hypergraph $H$ is {\em connected} if, for all distinct $a,b \in V(H)$, there exists a walk $(e_1,\ldots,e_{\ell})$ in $H$ such that $a \in e_1$ and $b \in e_{\ell}$. The {\em connected components} of $H$ are the maximal connected subhypergraphs of $H$. If $H$ is connected, then a vertex $y \in V(H)$ is called a {\em cut vertex} of $H$ if $H^{-y}$ is disconnected or a {\em non-cut vertex} otherwise. \newline
The {\em union} of two hypergraphs $H_1$ and $H_2$, denoted by $H_1 \cup H_2$, is the hypergraph with vertex set $V(H_1)\cup V(H_2)$ and edge set $E(H_1)\cup E(H_2)$. This is a {\em disjoint union} if $V(H_1)$ and $V(H_2)$ are disjoint. \newline
For further definitions and notations on hypergraphs, we refer the reader to the reference books \cite{berge} and \cite{bookhypergraph}.
\paragraph{Avoider-Enforcer Games.} Let $H$ be a hypergraph. An Avoider-Enforcer game on $H$ is played by two players, Avoider and Enforcer. In turn, the players pick a vertex of $H$ that has not been picked yet. The game ends when all the vertices have been picked. Enforcer has won if Avoider has {\em filled} an edge of $H$ (i.e. Avoider has picked all the vertices of some edge), otherwise Avoider has won. By convention, if $H$ contains the empty edge, then Enforcer wins.
A {\em round} is a sequence of two moves, one for the first player and one for the second player.
We say that {\em Avoider wins on $H$ as first} (resp. {\em second}, resp. {\em last}, resp. {\em second-to-last}) {\em player} if Avoider has a winning strategy for the Avoider-Enforcer game played on $H$ when Avoider is the first (resp. second, resp. last, resp. second-to-last) player. We introduce analogous definitions for Enforcer. Being the last player means playing the last move of the game: in other words, the last player is the first player if $H$ is odd or the second player if $H$ is even.

\noindent It can be noticed that the opposite of Avoider filling an edge is Enforcer picking at least one vertex in each edge i.e. filling a transversal. Therefore, Avoider is an ``enforcer of transversals", while Enforcer is an ``avoider of transversals". This role-reversal argument yields the following observation.

\begin{observation}\label{obs_dual}
	Let $H$ be a hypergraph, and denote by $H^{T}$ the hypergraph with same vertex set as $H$ whose edges are the transversals of $H$. Avoider wins on $H$ as last player (resp. second-to-last player) if and only if Enforcer wins on $H^T$ as last player (resp. second-to-last player).
\end{observation}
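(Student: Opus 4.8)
The plan is to build a payoff-preserving correspondence between plays of the Avoider--Enforcer game on $H$ and plays of the Avoider--Enforcer game on $H^T$, exploiting the fact that $H$ and $H^T$ have the same vertex set: in both games a position is merely a set of already-picked vertices, a legal move is picking an unpicked vertex, so the two games share the very same game tree and differ only in how the final board is scored.

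First I would record the elementary reformulation of the outcome. At the end of any play of the game on $H$, Avoider owns a set $A \subseteq V(H)$ and Enforcer owns $B = V(H)\setminus A$; by definition --- and consistently with the empty-edge convention, since $\varnothing \subseteq A$ always holds --- Avoider wins if and only if no edge of $H$ is contained in $A$, that is, if and only if every edge of $H$ meets $B$, i.e.\ if and only if $B$ is a transversal of $H$. Next I would prove the purely set-theoretic fact that, for any $S \subseteq V(H)$, the set $S$ is a transversal of $H$ if and only if $S$ contains an edge of $H^T$: if $S \supseteq e$ with $e \in E(H^T)$ then $e$ is a transversal of $H$ and hence so is its superset $S$; conversely, if $S$ is a transversal of $H$ then $S$ is itself an edge of $H^T$. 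Combining the two, Avoider wins the play on $H$ if and only if $B$ contains an edge of $H^T$; reading $B$ instead as the set that would be owned by Avoider in a play of the game on $H^T$, this says exactly that ``Avoider wins the play on $H$'' is the negation of ``the owner of $B$ fills an edge of $H^T$''.

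Then I would transfer strategies. Fixing the parity of $|V(H)|$, the phrases ``last player'' and ``second-to-last player'' designate fixed turn-indices, and these are the same in the game on $H$ and in the game on $H^T$ because the two hypergraphs share their vertex set (this is precisely why ``last'' matches ``last'' and not ``second-to-last''). Put Avoider of $H$ and Enforcer of $H^T$ into the designated position --- last, resp.\ second-to-last --- and their opponents into the other position. Since legal moves coincide, any strategy in one game is, verbatim, a strategy in the other; and along two plays that follow the same sequence of vertex picks, the set owned by Avoider of $H$ equals the set owned by Enforcer of $H^T$ (they occupy the same turns), while the set owned by Enforcer of $H$ equals the set owned by Avoider of $H^T$. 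Hence, on such a pair of plays, Avoider of $H$ wins if and only if Enforcer of $H$'s set is a transversal of $H$, if and only if Avoider of $H^T$'s set contains an edge of $H^T$, if and only if Avoider of $H^T$ fills an edge of $H^T$, if and only if Enforcer of $H^T$ wins. Therefore the same function is a winning strategy for Avoider on $H$ in the designated position exactly when it is a winning strategy for Enforcer on $H^T$ in that position, which gives both the ``last player'' and the ``second-to-last player'' equivalences simultaneously.

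I do not expect a genuine obstacle here: the mathematical content is just the reformulation of the outcome through transversals together with the duality ``$S$ is a transversal of $H$ if and only if $S$ contains an edge of $H^T$''. The only things requiring a touch of care are the bookkeeping around the conventions --- one should check that the argument still reads correctly when $\varnothing \in E(H)$, in which case $H^T$ has no edge and both sides of the equivalence fail, and symmetrically when $E(H) = \varnothing$, in which case $\varnothing \in E(H^T)$ and both sides hold by the empty-edge convention --- and making fully explicit that the ``designated position'' is literally the same turn-index in both games.
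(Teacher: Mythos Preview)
Your proposal is correct and follows exactly the role-reversal idea the paper sketches in the sentence preceding the observation (``Avoider is an `enforcer of transversals', while Enforcer is an `avoider of transversals'''); the paper treats this as self-evident and gives no further proof, whereas you have simply written out the details carefully, including the edge cases with the empty edge.
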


\noindent A convenient property of weak positional games is that there is never a need to consider hypergraphs where vertices have already been picked, as this case reduces to the case where the game has not started yet. Indeed, let us introduce the following notations, where $H$ is any hypergraph:
\begin{itemize}[noitemsep,nolistsep]
	\item For all $x \in V(H)$, we denote by $H^{+x}$ the hypergraph defined by $V(H^{+x})=V(H)\setminus\{x\}$ and $E(H^{+x})=\{e \setminus \{x\} \mid e \in E(H)\}$.
	\item For all $y \in V(H)$, we denote by $H^{-y}$ the hypergraph defined by $V(H^{-y})=V(H)\setminus\{y\}$ and $E(H^{-y})=\{e \in E(H) \mid y \not\in e\}$.
\end{itemize}
\noindent The operators ${}^{+x}$ and ${}^{-y}$ can be used to update the hypergraph during the game, so that the edges of the updated hypergraph represent what remains of the initial edges. Whenever Enforcer picks a vertex $y$, any edge containing $y$ is ``killed" since it will never be filled by Avoider. Using these notations, after one round of play of the Avoider-Enforcer game played on $H$ with Avoider picking $x$ and Enforcer picking $y$, it is as if a new game starts on the updated hypergraph $H^{+x-y}$. Therefore, all results on hypergraph classes which are stable under the operators ${}^{+x}$ and ${}^{-y}$ also hold for hypergraphs on which some moves have already been made, since we can always reduce to the case of a ``fresh" hypergraph using the above updating method. The winner of the game can be defined by induction. For example, when Enforcer is the first player: if $|V(H)|\leq 1$ then Enforcer wins on $H$ if and only if $\varnothing \in E(H)$, otherwise Enforcer wins on $H$ if and only if there exists $y \in V(H)$ (Enforcer's pick) such that for all $x \in V(H)\setminus\{y\}$ (Avoider's answer) Enforcer wins on $H^{-y+x}$.

\subsection{Outcome and importance of the last move}\label{sec:lastmove}

\noindent In Maker-Breaker games, it is well known (see for example \cite{positionalgames}) that any player who has a winning strategy as second player also has a winning strategy as first player. This leads to only three possible outcomes: Maker wins as first or second player, Breaker wins as first or second player, or the first player wins. On the contrary, in Avoider-Enforcer games, winning as second player is not a guarantee that one can win as first player, nor the other way around. As illustrated in Figure \ref{fig:second_player_wins}, all four outcomes exist when it comes to who plays first.

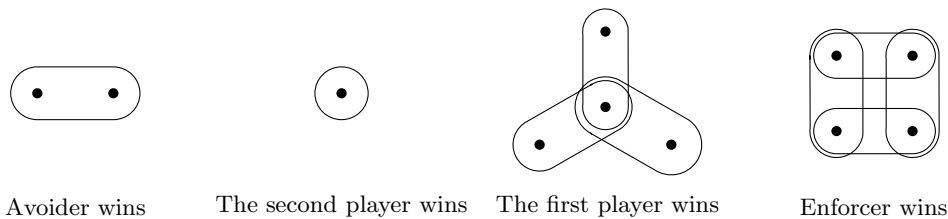
\begin{figure}[ht]
    \centering
        \begin{tikzpicture}
        
        \node at (0,0){
            \begin{tikzpicture}
            	\node[noeud] (a) at (-1,0){}; 
				\node[noeud] (b) at (0,-0){};				
				\draw \convexpath{a,b}{0.35cm};
            \end{tikzpicture}
        };
        
        \node at (3.5,0){
            \begin{tikzpicture}
				\node[noeud] (a) at (0,0){};			
				\draw (a) circle (0.35cm);
            \end{tikzpicture}
        };        
        
		\node at (7,0){
			\begin{tikzpicture}
				\node[noeud] (o) at (0,0){};
				\node[noeud] (a) at (90:1){};
				\node[noeud] (b) at (210:1){};
				\node[noeud] (c) at (330:1){};				
				\draw \convexpath{o,a}{0.3cm};				
				\draw \convexpath{o,b}{0.35cm};				
				\draw \convexpath{o,c}{0.4cm};
			\end{tikzpicture}
		};  
        
        \node at (10.5,0){
            \begin{tikzpicture}
				\node[noeud] (a) at (0,0){};
				\node[noeud] (b) at (1,0){};
				\node[noeud] (c) at (1,1){};
				\node[noeud] (d) at (0,1){};				
				\draw \convexpath{a,b}{0.3cm};				
				\draw \convexpath{b,c}{0.35cm};				
				\draw \convexpath{c,d}{0.3cm};				
				\draw \convexpath{d,a}{0.35cm};
            \end{tikzpicture}
        };        

        \node at (0,-1.5) {\footnotesize Avoider wins};
        \node at (3.5,-1.5) {\footnotesize The second player wins};
        \node at (7,-1.5) {\footnotesize The first player wins};
        \node at (10.5,-1.5) {\footnotesize Enforcer wins};
        
    \end{tikzpicture}
    \caption{Examples of hypergraphs for each possible outcome.}
    \label{fig:second_player_wins}
\end{figure}

\noindent However, a result similar to that of Maker-Breaker games can still be obtained, by considering the last move rather than the first move:

\begin{theorem}\label{theo:last}
Let $H$ be a hypergraph. If Avoider (resp. Enforcer) wins on $H$ as last player, then Avoider (resp. Enforcer) also wins on $H$ as second-to-last player.
\end{theorem}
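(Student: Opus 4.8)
\emph{Proof proposal.} The plan is to establish the statement for Avoider; the one for Enforcer then follows verbatim after replacing ``Avoider's set contains no edge'' by ``Enforcer's set is not a transversal'', a property behaving identically under the manipulations below (both survive when the relevant set loses a vertex). So suppose Avoider wins on $H$ as last player, write $n=|V(H)|$, and let us build a winning strategy for Avoider playing second-to-last, splitting according to the parity of $n$ (the cases with $n\le 1$ being trivial).

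\emph{From first player to second player ($n$ odd).} When $n$ is odd, the last player is the first player and the second-to-last player is the second player. Let $\sigma$ be a winning strategy for Avoider as first player, and $m$ its prescribed opening move. Playing second, Avoider runs in her head a \emph{virtual} game on $H$ in which she moves first according to $\sigma$, kept permanently one Avoider-move ahead of the real game: she opens the virtual game with $m$, and then, each time Enforcer really plays a vertex $y$, she inserts $y$ as Enforcer's next virtual move (or, should $y$ already be taken virtually, lets the virtual Enforcer take an arbitrary free virtual vertex instead), reads off $\sigma$'s answer, and plays that as her own next real move. The invariant to maintain is that Avoider's real set of claimed vertices is always contained in her virtual one; granting it, at the end her real set sits inside her virtual set, which is edge-free because $\sigma$ wins, so Avoider wins.

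\emph{From second player to first player ($n$ even).} When $n$ is even, the last player is the second player and the second-to-last player is the first player; I would reduce this to the odd case using a ``free'' vertex. Let $\widehat H$ be $H$ with a new isolated vertex $\omega$ added, so $|V(\widehat H)|$ is odd. First, Avoider wins on $\widehat H$ as first player: she plays $\omega$ (which changes nothing, $\omega$ lying in no edge), leaving exactly a game on $H$ with Enforcer to move, where she applies her second-player winning strategy. Since $|V(\widehat H)|$ is odd, playing first on $\widehat H$ is playing last on $\widehat H$; so by the odd case applied to $\widehat H$, Avoider also wins on $\widehat H$ as second player, via some $\rho$. This in turn yields a winning strategy for her as first player on $H$: she runs a virtual game on $\widehat H$ in which she plays second according to $\rho$, pretending that Enforcer's first virtual move was $\omega$; her opening real move on $H$ is $\rho$'s answer to $\omega$, and afterwards she feeds every real Enforcer move into the virtual game and copies $\rho$'s answers. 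Her real set on $H$ then equals her virtual set on $\widehat H$, which is edge-free in $\widehat H$ — hence in $H$, as $\omega$ is isolated — because $\rho$ wins.

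The routine verifications throughout are that the prescribed Avoider moves are always legal, i.e.\ that the virtual strategy never names an already-claimed real vertex; this will follow from the bookkeeping invariants (in the odd case one additionally needs that every vertex Enforcer has really claimed is claimed by someone in the virtual game). I expect the genuine difficulty — and the reason the two parities are treated separately — to be exactly the direction of the coupling: the ``one move ahead'' simulation succeeds because the phantom vertex ends up in Avoider's \emph{virtual} set, and enlarging Avoider's virtual set can only work against her; the mirror simulation, with Avoider a move \emph{behind}, would push the wrong way, which is precisely why the even case has to be funnelled through the odd one by spending the extra vertex $\omega$.
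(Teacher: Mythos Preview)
Your proof is correct and is at heart the same strategy-stealing argument as the paper's, just organised differently. The paper works on Enforcer's side and treats both parities by the same ``forget a vertex'' trick: playing first, Enforcer makes an arbitrary move and forgets it; playing second, Enforcer forgets Avoider's opening move. You instead work on Avoider's side, handle the odd case by running the virtual game one Avoider-move ahead (the dual of the paper's ``forget the opponent's first move''), and then reduce the even case to the odd case by adjoining an isolated vertex $\omega$. Unwinding your reduction, Avoider-first on $H$ simulates Avoider-second on $\widehat H$, which via the odd case simulates Avoider-first on $\widehat H$, which in turn is ``play $\omega$, then Avoider-second on $H$''; so the net effect is exactly the paper's direct ``make an arbitrary move and forget it'', just with an extra layer of indirection. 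Your bookkeeping claim (every real-claimed vertex is virtually claimed, hence $\sigma$'s output is always real-legal) is the right invariant and does go through.
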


\begin{proof}

By Observation \ref{obs_dual}, it suffices to consider Enforcer's side of things. Suppose Enforcer wins on $H$ as last player. We will use strategy stealing to show that Enforcer can basically apply the same strategy when he is the second-to-last player. The proof will distinguish between two cases depending on the parity of $H$.

First assume that $H$ is even, meaning Enforcer has a winning strategy $\Sigma$ as second (and last) player.%\todo[inline]{j'aurais bien aimé qu'il y ait une preuve qui ne dépende pas de la parité. Globalement la preuve est la même dans les deux cas. Enforcer oublie le coup du premier joueur et applique sa strategie de dernier joueur. L'ensemble des coups d'Avoider contient ceux de la partie imaginaire donc Enforcer gagne. Aline: on peut pas dire que quitte à prendre n'importe quel coup d'avoider et le graphe reduit ensuite, ca revient à Enforcer qui commence ?}
When playing first, Enforcer can pick an arbitrary vertex $u$, ``forget" this move, and then act as second player applying the strategy $\Sigma$. In other words, Enforcer imagines a ``fictitious game" where he is the second player and $u$ has not been played, and where he applies $\Sigma$. Enforcer copies each Avoider move into the fictitious game, and then copies the answer given by $\Sigma$ in the real game. The only exception is whenever $\Sigma$ requires that Enforcer picks the forgotten vertex, in which case he picks another arbitrary vertex instead in the real game, which becomes the new forgotten vertex. The fact that $\Sigma$ is a winning strategy for Enforcer as second player ensures that Avoider fills an edge at some point in the fictitious game. Since the set of vertices picked by Avoider is the same in both games, Avoider also fills an edge in the real game, which concludes.

Assume now that $H$ is odd, meaning Enforcer has a winning strategy $\Sigma$ as first (and last) player. When playing second, Enforcer will ``forget" the vertex $u$ that Avoider picked as her first move, and then act as first player applying the strategy $\Sigma$. Same as before, whenever $\Sigma$ requires that Enforcer picks the forgotten vertex, he picks another arbitrary vertex instead in the real game, which becomes the new forgotten vertex. The fact that $\Sigma$ is a winning strategy for Enforcer as first player ensures that Avoider fills an edge at some point in the fictitious game. Since the set of vertices picked by Avoider in the real game is a superset of what it is in the fictitious game (it is the same plus the vertex $u$), Avoider also fills an edge in the real game, which concludes.
	%Now, suppose Avoider is the one who wins on $H$ as last player. The same strategy stealing argument that we used above for Enforcer actually works for Avoider, and shows that she wins on $H$ as second-to-last player. Indeed, since Avoider never fills an edge in the fictitious game, it means the set of vertices picked by Enforcer in the fictitious game will hit every edge at some point, so the same will be true in the real game where Avoider is the second-to-last player.
\end{proof}

\noindent Theorem~\ref{theo:last} invites us to consider outcomes in terms of the player playing last, rather than the player playing first as is done for achievement games. In the following, we will denote by $o(H)$ the outcome of the game played on a hypergraph $H$.
Since any player who has a winning strategy as last player also has a winning strategy as second-to-last player, there are only three possible values for $o(H)$: either Avoider wins in all cases (denoted by $o(H)=\A$), or Enforcer wins in all cases (denoted by $o(H)=\E$), or the player playing second-to-last wins (denoted by $o(H)=\SL$).
Note that one can recover the traditional outcomes by considering the parity of $H$: if $o(H)=\SL$, then the first player has a winning strategy if $H$ is even, and the second player has a winning strategy if $H$ is odd.

\noindent In the Maker-Breaker convention, it is possible to reduce to the case where, say, Maker starts. Indeed, if Breaker starts then it suffices to consider all possibilities for Breaker's first move $y$, after which a new game begins on the updated hypergraph $H^{-y}$ with Maker as first player. Similarly, in the Avoider-Enforcer convention, where the key information is who plays last rather than who plays first, we would like to be able to choose the last player and always reduce to that case. For this, an idea would be to consider all possibilities, not for the first move (chosen by the first player), but for the last move (forced by the second-to-last player). Indeed, the second-to-last player has the luxury of avoiding any particular vertex $x$ of their choice: by never picking $x$, they will force the last player to eventually pick $x$. Therefore, an intuition for a reduction would be the following: if $x$ is the ``worst" vertex to pick, then the second-to-last player could just as well give $x$ to their opponent before the game starts, and then play as last player on what remains. One direction of this reasoning does hold:

\begin{proposition}\label{prop:lastmove}
	Let $H$ be a hypergraph.
	\begin{itemize}[noitemsep,nolistsep]
		\item If there exists $y \in V(H)$ such that Avoider wins on $H^{-y}$ as last player, then Avoider wins on $H$ as second-to-last player.
		\item If there exists $x \in V(H)$ such that Enforcer wins on $H^{+x}$ as last player, then Enforcer wins on $H$ as second-to-last player.
	\end{itemize}
\end{proposition}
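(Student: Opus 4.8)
The plan is to prove the second item directly, by a strategy-stealing argument in the spirit of the proof of Theorem~\ref{theo:last}, and to deduce the first item from it by duality. For the duality: by Observation~\ref{obs_dual}, the first item is equivalent to the statement ``if Enforcer wins on $(H^{-y})^{T}$ as last player, then Enforcer wins on $H^{T}$ as second-to-last player'', and this is precisely the second item applied to the hypergraph $H^{T}$ with the vertex $y$, once we observe that $(H^{-y})^{T}=(H^{T})^{+y}$. This identity holds because a set $t$ is a transversal of $H$ if and only if $t\setminus\{y\}$ meets every edge of $H$ not containing $y$ (for the converse implication, replace $t$ by $t\cup\{y\}$); hence the edges of $(H^{T})^{+y}$, namely the sets $t\setminus\{y\}$ with $t$ a transversal of $H$, are exactly the transversals of $H^{-y}$, i.e. the edges of $(H^{-y})^{T}$.

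Now fix $x\in V(H)$ together with a winning strategy $\Sigma$ for Enforcer on $H^{+x}$ as last player. As second-to-last player on $H$, Enforcer adopts the rule of never picking $x$: this is always possible, since being second-to-last he never makes the last move, so at least two vertices remain at each of his turns, one of them distinct from $x$; in return, Avoider is eventually forced to pick $x$. Alongside the real game, Enforcer maintains a fictitious game on $H^{+x}$ in which he follows $\Sigma$, keeping the two games synchronized by mirroring moves between them, with the following provisions. The move in which Avoider picks $x$ is a ``no-op'' that is not reflected in the fictitious game; to repair the resulting one-move parity shift, Enforcer's immediately following move is an arbitrary available vertex, which he declares to be the current \emph{forgotten vertex} and likewise does not reflect in the fictitious game. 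Thereafter, exactly as in the proof of Theorem~\ref{theo:last}, whenever $\Sigma$ instructs Enforcer to pick the forgotten vertex he picks a fresh arbitrary available vertex instead, which becomes the new forgotten vertex. When the real game ends, one vertex of $H^{+x}$ is still unpicked in the fictitious game — the last forgotten vertex — and Enforcer picks it there, completing the fictitious game. By construction, Avoider's picks in the fictitious game are exactly her real picks with $x$ deleted; since $\Sigma$ is winning she fills some edge $e\setminus\{x\}$ of $H^{+x}$, with $e\in E(H)$, and as her real picks contain both $e\setminus\{x\}$ and $x$ she fills $e$ in the real game. Hence Enforcer wins.

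The substantive part of the argument is the synchronization bookkeeping. Simply deleting Avoider's $x$-move from the fictitious game would leave two consecutive Enforcer moves there; the buffer ``forgotten vertex'' move is exactly what compensates for this, and after it the two games must be checked to proceed in lock-step with matching turn order. One must also verify that Enforcer always has a legal vertex to play (again because he is second-to-last), and that the move counts balance so that the fictitious game is completed exactly, with the forgotten vertex reclaimed as its final move. Finally, the degenerate case in which Avoider is forced to take $x$ only on the very last move should be treated on its own; there no forgotten vertex ever arises, and the fictitious game is just the real game with that last move erased. I expect this parity/turn-order accounting to be the main difficulty; transferring the outcome from $H^{+x}$ back to $H$ is then immediate.
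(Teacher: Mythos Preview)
Your argument is correct, but it follows a genuinely different route from the paper's. The paper proves the \emph{first} assertion by contrapositive: assuming Enforcer wins on $H$ as last player with strategy $\Sigma$, it shows that for every $y$ Enforcer wins on $H^{-y}$ as second-to-last player, by having Enforcer simulate $\Sigma$ on $H$ and, at the moment $\Sigma$ tells him to pick $y$, observing that the two positions coincide and invoking Theorem~\ref{theo:last} to absorb the resulting turn-order shift. The second assertion is then declared analogous. You instead prove the \emph{second} assertion directly, by a forgotten-vertex bookkeeping that mimics the \emph{proof} of Theorem~\ref{theo:last} rather than its statement, and then derive the first assertion from the second via the duality identity $(H^{-y})^{T}=(H^{T})^{+y}$ together with Observation~\ref{obs_dual}. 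The paper's approach is shorter and more modular, treating Theorem~\ref{theo:last} as a black box; yours is more self-contained (it does not rely on Theorem~\ref{theo:last}) and has the pleasant feature of making explicit that the two items of the proposition are dual to one another, rather than merely ``analogous''.
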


\begin{proof}
	We only prove the first assertion, as the second is proved analogously. We show the contrapositive. Suppose that Enforcer has a winning strategy $\Sigma$ on $H$ as last player. Let $y \in V(H)$: we want to show that Enforcer wins on $H^{-y}$ as second-to-last player. Note that the first player is the same in both games, since the last player and the parity are different. Initially, Enforcer follows the strategy $\Sigma$ in the fictitious game played on $H$, and copies the moves in the real game played on $H^{-y}$. In the fictitious game, being the last player, Enforcer will inevitably pick $y$ at some point. At that moment, the updated hypergraph $H'$ is the same in both games. By optimality of $\Sigma$, Enforcer wins on $H'$ as last player, therefore Enforcer also wins on $H'$ as second-to-last player by Theorem \ref{theo:last}. In conclusion, Enforcer wins the real game as a whole.
\end{proof}

\noindent Unfortunately, neither converse holds in Proposition \ref{prop:lastmove} (see Figure \ref{fig:counterexamples} for counterexamples). 
%As counterexamples, which can easily be checked by brute force, consider $H=C_5$ (2-uniform hypergraph forming a cycle on 5 vertices) for the first assertion, or $H$ defined by $V(H)=\{x,y,z,u,v\}$ and $E(H)=\{\{x,y,z\},\{z,u,v\},\{x,v\}\}$ for the second assertion.
This is not surprising: the ``worst vertex" might change depending on what happens during the game, so it might not be a good idea for the second-to-last player to identify one permanently before the game begins.

\begin{figure}[h]
	\centering
	\includegraphics[scale=.6]{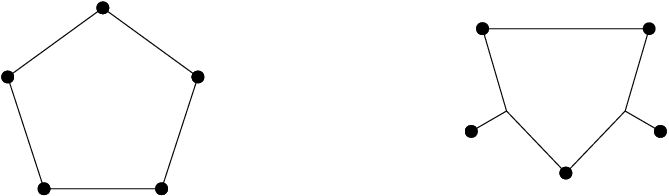}
	\caption{Counterexamples to the converse of the first (left) and the second (right) assertions of Proposition \ref{prop:lastmove}. As in all figures from now, we represent edges of size 2 with a straight line and edges of size 3 with a ``claw" shape. }\label{fig:counterexamples}
\end{figure}

\noindent All in all, we do not know of an easy way to reduce to the case where Avoider or Enforcer is the last player, so we will have to consider both possibilities in our studies. However, we do have a simplification when a 1-edge is involved.

\begin{proposition}\label{prop:1-edge}
	Let $H$ be a hypergraph containing a 1-edge $\{y\}$. Then:
	\begin{itemize}[noitemsep,nolistsep]
		\item Avoider loses on $H$ as last player.
		\item Avoider wins on $H$ as second-to-last player if and only if Avoider wins on $H^{-y}$ as last player.
	\end{itemize}
\end{proposition}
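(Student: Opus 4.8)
The plan is to prove the two assertions separately, using the updating mechanism ${}^{+x-y}$ and appealing to Theorem~\ref{theo:last}.

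\medskip

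\textbf{First assertion.} Suppose Avoider is the last player on $H$, and $H$ contains the $1$-edge $\{y\}$. I claim Enforcer wins by the simple strategy of never picking $y$. Since Enforcer is the second-to-last player here, he has the power to avoid any single vertex he likes throughout the game (whenever it is his turn and $y$ is still available, he picks something else; this is always possible as long as at least two vertices remain, and when only $y$ remains it is Avoider's turn). Hence Avoider is forced to pick $y$ at some point, which fills the edge $\{y\}$, so Enforcer wins. This handles the first bullet.

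\medskip

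\textbf{Second assertion, easy direction.} Now suppose Avoider is the second-to-last player on $H$. First I would show: if Avoider wins on $H^{-y}$ as last player, then Avoider wins on $H$ as second-to-last player. This is immediate from the first bullet of Proposition~\ref{prop:lastmove} (with the vertex $y$): if Avoider wins on $H^{-y}$ as last player, then Avoider wins on $H$ as second-to-last player. So nothing new is needed here.

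\medskip

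\textbf{Second assertion, hard direction.} The substantive part is the converse: if Avoider wins on $H$ as second-to-last player, then Avoider wins on $H^{-y}$ as last player. I would argue the contrapositive: assume Enforcer wins on $H^{-y}$ as last player, and show Enforcer wins on $H$ as second-to-last player. The strategy for Enforcer on $H$ is: pick $y$ on his very first move, then play the winning last-player strategy on the remaining board. One must check the bookkeeping. Note that $H$ and $H^{-y}$ have opposite parities, and in both games Avoider is the second-to-last player of $H$; since Enforcer picks $y$ first, after that first move of Enforcer the board is $H^{-y}$ with Avoider still to move, and Enforcer is now exactly in the position of the last player on $H^{-y}$ (he has just "wasted" a move, which is fine — formally, removing $y$ from the vertex set and picking it are the same for the purpose of what edges Avoider can still fill, since $y$ lies in no edge of $H^{-y}$, and the only edge of $H$ through $y$ that is not an edge of $H^{-y}$... wait, there could be several edges of $H$ containing $y$). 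The key observation making this clean: once Enforcer picks $y$, every edge of $H$ containing $y$ is killed, so the relevant board is precisely $H^{-y}$, and Avoider fills an edge of $H$ during the rest of the game if and only if she fills an edge of $H^{-y}$. Thus Enforcer's last-player winning strategy on $H^{-y}$ transfers verbatim. The one subtlety to address carefully is the turn order: I must verify that "Enforcer picks $y$ first, then is the last player on $H^{-y}$" is consistent, i.e. that Enforcer is indeed the first player on $H$ in the scenario where Avoider is second-to-last — this depends on the parity of $H$, and I would spell out both the even and odd cases (in one case Enforcer is first on $H$ and becomes last on $H^{-y}$; this is the only consistent scenario, and one checks the parities match). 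The main obstacle is precisely getting this parity/turn-order bookkeeping exactly right; the game-theoretic content is otherwise routine.
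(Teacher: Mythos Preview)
Your first assertion and the ``if'' direction of the second assertion are correct and match the paper's proof.

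For the ``only if'' direction, you take the contrapositive (the paper argues directly from Avoider's side), and here there is a genuine gap, not just bookkeeping. Your strategy ``Enforcer picks $y$ on his very first move'' works cleanly only when $H$ is odd: then Enforcer is first on $H$, picks $y$, and is left as second player on the even hypergraph $H^{-y}$, which is indeed the last-player role you assumed he wins. But when $H$ is even, Enforcer is \emph{second} on $H$; Avoider plays some $x$ first, then Enforcer picks $y$, and now it is Avoider's turn again. Restricted to $H^{-y}$, Avoider has thus played twice in a row before Enforcer plays at all. You assumed Enforcer wins on $H^{-y}$ as last player, which here (since $H^{-y}$ is odd) means as \emph{first} player --- so you cannot simply ``play the winning last-player strategy on the remaining board''. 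Closing this requires an extra-moves argument (Lemma~\ref{lem:many_moves} does the job: Enforcer winning as first player on $H^{-y}$ still wins after Avoider makes two opening moves), or else a different strategy (have Enforcer save $y$ for his \emph{final} move and play his second-player strategy on $H^{-y}$ in the meantime, which works since Theorem~\ref{theo:last} gives him one).

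The paper sidesteps all of this by arguing directly: if Avoider has a winning strategy $\Sigma$ on $H$ as second-to-last player, then $\Sigma$ never picks $y$, and since the first player is the same on $H$ and on $H^{-y}$ (opposite last player, opposite parity), Avoider can simply run $\Sigma$ on $H^{-y}$ verbatim. This avoids the parity split entirely. Your contrapositive route can be made to work, but it is not the routine check you suggest.
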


\begin{proof}
	The first assertion is obvious: since Enforcer is the second-to-last player, he can force Avoider to pick $y$ eventually (this can also be seen as an application of Proposition \ref{prop:lastmove}, indeed Enforcer trivially wins on $H^{+y}$ which contains the empty edge). Let us now address the second assertion. The ``if" direction is simply Proposition \ref{prop:lastmove}, so assume Avoider has a winning strategy $\Sigma$ on $H$ as second-to-last player. We want to show that Avoider wins on $H^{-y}$ as last player. Note that the first player is the same in both games, since the last player and the parity are different. Therefore, and since $\Sigma$ obviously never has Avoider picking $y$, Avoider can simply apply $\Sigma$ playing on $H^{-y}$ as last player. By optimality of $\Sigma$, Avoider never fills an edge of $H$, so she never fills an edge of the subhypergraph $H^{-y}$ either.
\end{proof}

\subsection{Outcome of a disjoint union}

\noindent In Maker-Breaker games, it is easy to deduce the outcome of a disjoint union from the outcome of its components. For instance, if Maker wins as first or second player on $H_1$, then the same holds on any disjoint union $H_1 \cup H_2$, since Maker may disregard $H_2$ and play all her moves inside $H_1$. In this example, it is key that playing twice in a row in some component cannot harm Maker. Such arguments do not hold in Avoider-Enforcer games, which makes the study of disjoint unions less straightforward. However, we now show that the Avoider-Enforcer convention also has the convenient property that the outcome of a disjoint union is uniquely determined by the outcome of its components. We start by proving a useful lemma.

\begin{lemma}
\label{lem:many_moves}
    Let $H$ be a hypergraph.
    \begin{itemize}[nolistsep,noitemsep]
    	\item If a player (Avoider or Enforcer) wins on $H$ as first player, than that same player would have a winning strategy if their opponent was allowed to start by making any even number of moves.
    	\item If a player (Avoider or Enforcer) wins on $H$ as second player, than that same player would have a winning strategy if their opponent was allowed to start by making any odd number of moves.
    \end{itemize} 
\end{lemma}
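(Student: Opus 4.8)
The plan is to reduce both items to the single case of \emph{two} extra moves, and to treat that case by a strategy-stealing argument generalising the proof of Theorem~\ref{theo:last}. Concretely, I would first isolate the statement $(\ast)$: \emph{if a player $X$ wins on $H$ as first player, then $X$ also wins on $H$ when its opponent makes its first two moves consecutively}. As explained above, once the opponent has made its first two moves $u_1,u_2$ (which go unanswered), the position is the fresh game, with $X$ to move, on the updated hypergraph $H^{-u_1-u_2}$ if the opponent is Enforcer and $H^{+u_1+u_2}$ if it is Avoider — in both cases a hypergraph with vertex set $V(H)\setminus\{u_1,u_2\}$. So $(\ast)$ says exactly: if $X$ wins on $H$ as first player, then $X$ wins as first player on the corresponding updated hypergraph, for every pair of distinct vertices $u_1,u_2$. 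Granting $(\ast)$, the first item follows by induction on $k$ (trivial for $k=0$): after the opponent's first two moves one lands, by $(\ast)$, on a hypergraph with two fewer vertices on which $X$ again wins as first player, and the rest of the play is ``$X$ wins there when the opponent makes $2(k-1)$ moves first'', covered by the induction hypothesis. The second item follows as well: if $X$ wins on $H$ as second player, then $X$ wins as first player on the updated hypergraph resulting from the opponent's first move, for every possible such move $u$; hence, by the first item, $X$ wins on each of these hypergraphs when the opponent makes $2k$ further moves first, which is precisely ``$X$ wins on $H$ when the opponent makes $2k+1$ moves first''.

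To prove $(\ast)$, fix a winning strategy $\Sigma$ for $X$ on $H$ as first player and let $u_1,u_2$ be the opponent's first two moves, so that $X$ must now win the fresh, $X$-first game on the updated hypergraph $H'$, with $V(H')=V(H)\setminus\{u_1,u_2\}$. As in the proof of Theorem~\ref{theo:last}, $X$ maintains a fictitious $X$-first game on $H$ played according to $\Sigma$: $X$ relays the opponent's moves into the fictitious game, plays there the reply dictated by $\Sigma$, and copies it into the real game — unless $\Sigma$ dictates a vertex unavailable in the real game (in particular $u_1$ or $u_2$, which are absent from $H'$), in which case $X$ instead plays an arbitrary available vertex of $H'$ in the real game, which becomes the new ``forgotten'' vertex exactly as in Theorem~\ref{theo:last}; in addition $X$ plays one of $u_1,u_2$ for the opponent in the fictitious game at an opportune moment, so that the fictitious game runs to its natural end. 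Since $\Sigma$ is followed faithfully in the fictitious game, the Avoider-player fills an edge of $H$ there if and only if $X$ is Enforcer. The crucial feature of the construction is that the set of vertices picked by the Avoider-player in the real game is contained in the set she picks in the fictitious game, the two differing only by at most one of $u_1,u_2$; since the edges of $H'$ are exactly the traces on $V(H')$ of the edges of $H$ when $X$ is Enforcer, respectively the edges of $H$ disjoint from $\{u_1,u_2\}$ when $X$ is Avoider, it follows in both cases that $X$ wins the real game, which establishes $(\ast)$.

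The main obstacle is making the strategy of the previous paragraph precise. Unlike in Theorem~\ref{theo:last}, where a single forgotten vertex keeps the real and fictitious games within one move of each other throughout, here the two foreign vertices $u_1,u_2$ have to be threaded into the fictitious game, and one must check that, with the right choices of the arbitrary ``forgotten'' moves, the number of forgotten vertices never exceeds one and the move counts stay compatible (the fictitious game having exactly one more move for each player), so that in the end the Avoider-player's two picked sets really do differ only by one of $u_1,u_2$. This is a routine but slightly delicate invariant-tracking argument; once it is settled, $(\ast)$ — and hence, via the two reductions above, the whole lemma — follows.
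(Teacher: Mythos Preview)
Your high-level structure is sound: reducing the second item to the first via the opponent's first move, and reducing the first item to the case of two extra moves by induction on $k$, are both correct. The statement $(\ast)$ is also true. However, your proof of $(\ast)$ is left incomplete (as you acknowledge), and the forgotten-vertex mechanism you borrow from Theorem~\ref{theo:last} is not the natural tool here: that mechanism is designed for the situation where the real and fictitious games have the \emph{same} vertex set but a different first player, whereas here the vertex sets differ by two and the first player is the same. This mismatch is exactly why you end up needing both a forgotten vertex \emph{and} an inserted opponent move, and why the invariant becomes ``slightly delicate''.

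The paper takes a cleaner route that handles all $2k$ extra moves in one shot, with no induction and no forgotten vertex. By Observation~\ref{obs_dual} it suffices to treat the case where $X$ is Enforcer. Pair the opponent's $2k$ initial moves arbitrarily into pairs $\{u_i,v_i\}$, forget they have been played, and run $\Sigma$ on $H$, copying moves back and forth with the real game. Whenever $\Sigma$ tells Enforcer to play some vertex of $X$ (say $u_i$), do not make a real move at all: instead imagine, in the fictitious game only, that Enforcer picks $u_i$ and Avoider immediately answers with its partner $v_i$ --- a complete fictitious round --- and then consult $\Sigma$ again. Because each such insertion is a whole round, the two games stay in lockstep on $V(H)\setminus X$ throughout; the fictitious Avoider's picked set is exactly the real Avoider's picks on $V(H)\setminus X$ together with some of the $v_i$'s, hence a subset of Avoider's actual picked set (which contains all of $X$). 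The conclusion follows immediately. Your induction, your forgotten-vertex bookkeeping, and your separate treatment of the two players are thus all bypassed.
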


\begin{proof}
    By Observation \ref{obs_dual}, it suffices to address Enforcer's side of things.
    \begin{itemize}[nolistsep,noitemsep]
    	\item Suppose that Enforcer has a winning strategy $\Sigma$ on $H$ as first player. Now consider the game played on $H$ where Avoider starts by playing on a subset $X$ of $V(H)$ with $|X|$ even. Enforcer first arbitrarily partitions $X$ into pairs $\{u_i,v_i\}$. Then, he ``forgets" that these vertices have been played, and he applies the strategy $\Sigma$. If, at some point during the game, $\Sigma$ requires Enforcer to play inside $X$ (say, some $u_i$), then he imagines that he picks $u_i$ and that Avoider answers by picking $v_i$. Since the set of vertices actually picked by Avoider is a superset of the imaginary one, which contains an edge since $\Sigma$ is a winning strategy, it also contains an edge.
    	\item Suppose that Enforcer wins on $H$ as second player: this means that, whatever Avoider's first move $x$, Enforcer wins on $H^{+x}$ as first player. Now, if Avoider makes an even number of moves additionally to $x$ (i.e. an odd total number of moves at the start including $x$), Enforcer still wins according to the even case which we have already addressed. \qedhere
    \end{itemize}
\end{proof}

%\begin{remark}
%Note that, since the result holds even if vertices have already been played, if a player has a winning strategy as the second player then, if the other player plays an odd number of moves in a row, they still have a winning strategy. Indeed, they can just assume that one of these vertices were already played and apply the strategy described above.
%\end{remark}

\begin{theorem}\label{theo:union}
    Let $H_1$ and $H_2$ be two hypergraphs with disjoint vertex sets. The outcome of $H_1 \cup H_2$ is obtained from the outcomes of $H_1$ and $H_2$ according to the following table:
%    \begin{itemize}
%        \item If $o(H_1) = \E$ or $o(H_2) = \E$, then $o(H_1 \cup H_2) = \E$;
%        \item If $o(H_1) = \SL$ and $o(H_2) = \SL$, then $o(H_1 \cup H_2) = \E$;
%        \item If $o(H_1) = \A$ and $o(H_2) = \A$, then $o(H_1 \cup H_2) = \A$;
%        \item Otherwise, $o(H_1 \cup H_2) = \SL$.
%    \end{itemize}

\begin{center}
%\begin{table}[h] \centering
\begin{tabular}{c|c|c|c}
  \diagbox{$H_2$}{$H_1$} & $\A$ & $\SL$ & $\E$  \\ \hline
  $\A$ & $\A$ & $\SL$ & $\E$ \\ \hline
  $\SL$ & $\SL$ & $\E$ & $\E$ \\ \hline
  $\E$ & $\E$ & $\E$ & $\E$ \\
\end{tabular}
%\caption{Outcome of a disjoint union.}\label{tab:union}
%\end{table}
\end{center}

\end{theorem}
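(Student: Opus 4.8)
The plan is to prove the nine entries of the table by reducing everything to statements about who wins as first/second player on each component, and then combining winning strategies across the disjoint union using the ``extra moves'' freedom granted by Lemma \ref{lem:many_moves}. The first observation is that $o(H)=\A$ means Avoider wins as both first and second player; $o(H)=\E$ means Enforcer wins as both first and second player; and $o(H)=\SL$ means Avoider wins as second player but loses as first, \emph{and} Enforcer wins as second player but loses as first (equivalently, in each of the two parity-scenarios the second-to-last player wins). I would first record this unpacking and also note the symmetry of the table (it is symmetric in $H_1,H_2$, as it must be since $H_1\cup H_2\cong H_2\cup H_1$), so only six distinct entries need genuine work.

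The core idea for each entry is a strategy that plays component-wise, using Lemma \ref{lem:many_moves} to absorb the ``wrong-parity'' moves the opponent makes in the other component. Concretely, when it is a player's turn in the union they will respond in the same component as the opponent's last move; a player who has a winning strategy as (say) first player on $H_i$ can maintain it even though the opponent has meanwhile made some number of moves in $H_{3-i}$, \emph{provided} the number of opponent moves inside $H_i$ that this player has had to ``skip'' has the right parity — which is exactly what Lemma \ref{lem:many_moves} guarantees, since skipping a block of opponent moves in $H_i$ is the same as letting the opponent open with that many extra moves there. I would first do the easy corner $\A{+}\A=\A$: Avoider plays in whichever component Enforcer just played, so in each component Avoider is effectively the second player (after possibly one initial free move she spends on a component of her choice when she is the overall first player), which by $o(H_i)=\A$ and the parity-absorption lemma keeps both components edge-free. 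The dual argument gives $\E{+}\A=\E$ and $\E{+}\E=\E$ from Enforcer's side: Enforcer ignores the $\A$-component entirely (never filling a transversal there is free) and uses his winning strategy on the $\E$-component, again absorbing parity via Lemma \ref{lem:many_moves}; for $\E{+}\E$ he picks either component. These three, plus symmetry, cover the whole first column/row and the bottom row.

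The remaining nontrivial cells are $\SL{+}\A=\SL$ and $\SL{+}\SL=\E$ (and, by symmetry, $\A{+}\SL$). For $\SL{+}\A$: I claim the second-to-last player of $H_1\cup H_2$ wins. The strategy is ``mirror the opponent's component, and in the $\SL$-component play so as to be its second-to-last player.'' The point is that the second-to-last player of the union can guarantee being the second-to-last player within the $\SL$-component: by responding in the opponent's component, the move-parity inside the $\A$-component is controlled, and since $o(H_2)=\A$ that component is harmless to whoever must move there; meanwhile in $H_1$ the designated winner applies their $\SL$-winning strategy, with Lemma \ref{lem:many_moves} used to skip over blocks of opponent moves in $H_1$ of controlled parity. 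For $\SL{+}\SL=\E$ the claim is that Enforcer wins \emph{regardless} of parity. Here Enforcer's strategy is: always respond in the component where Avoider just moved. Then within each component Enforcer is the ``second player'' in the induced sub-play — and since $o(H_i)=\SL$ means Enforcer wins as second player on $H_i$, and again the parity of skipped Avoider-moves in a component is even (Enforcer immediately answers each Avoider move in the same component, so between two consecutive Enforcer moves in $H_i$ there is exactly one Avoider move there — wait, this needs care), Enforcer forces Avoider to fill an edge in whichever component Avoider is forced to ``complete.'' The bookkeeping of exactly which player is ``first'' in each component, and handling the overall first move, is where I expect the main obstacle: one must argue that Enforcer can always mirror (he is never stuck because Avoider just moved, so the same component still has an unpicked vertex unless that component is finished, in which case Avoider's move was in the other one), and that the parity of the leftover moves in each component is compatible with the second-player winning strategies there. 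I would handle this by a clean invariant: \emph{after each of Enforcer's moves, both components have an even number of picked vertices}, maintained by the mirroring rule, with the one subtlety being the game-opening move when Enforcer is second — which he absorbs using the ``forget one vertex'' trick exactly as in Theorem \ref{theo:last}. Once the invariant is in place, restricting the play to a component gives a legitimate play of the component game in which Enforcer is the second player, so his $\SL$-strategy there applies verbatim and forces an edge. The hardest part is thus not any single strategy but setting up the right parity invariant so that all nine cases follow from Lemma \ref{lem:many_moves} uniformly; I would present that invariant once and then treat the cases as corollaries.
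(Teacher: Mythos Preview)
Your high-level plan (mirror the opponent's component and use Lemma~\ref{lem:many_moves} to absorb parity mismatches) is the same as the paper's, but there is a genuine conceptual error that makes your $\SL+\SL$ argument break down. You write that $o(H)=\SL$ ``means Avoider wins as second player but loses as first, and Enforcer wins as second player but loses as first.'' That is false: $\SL$ means the \emph{second-to-last} player wins, and whether that player is first or second depends on the parity of $H$. In particular, if $H_i$ is even and $o(H_i)=\SL$, then Enforcer wins on $H_i$ as \emph{first} player, not as second. Your mirroring invariant (``after each Enforcer move both components have an even number of picked vertices'') makes Enforcer the \emph{second} player inside each component, which is exactly the losing role when both $H_1,H_2$ are even. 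The paper handles this sub-case by having Enforcer \emph{cross over}: if Avoider opens in $H_1$, Enforcer plays his $\Sigma_2$-first-move in $H_2$, so that he is effectively first in $H_2$ and can invoke Lemma~\ref{lem:many_moves} for $H_1$ later. Your uniform invariant cannot produce this cross-over, and the ``forget one vertex'' trick from Theorem~\ref{theo:last} does not fix it either (that trick lets Enforcer absorb an \emph{extra} move of his own, whereas here he needs to manufacture being \emph{first} in a component). You also only discuss the subtlety ``when Enforcer is second'', but when exactly one $H_i$ is odd the total is odd and Enforcer (as last player) is \emph{first}, so he must open somewhere and your invariant is violated from the outset.

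The $\SL+\A$ case is also underspecified in a way that matters. Your sentence ``the second-to-last player of the union can guarantee being the second-to-last player within the $\SL$-component'' is not a strategy and is in fact not how the paper proceeds: the paper shows Enforcer wins as second-to-last by dumping all his moves into $H_2$ first and then applying Lemma~\ref{lem:many_moves} on $H_1$; for Avoider's side it runs a genuine four-way parity split, with two of the sub-cases \emph{reduced} to the already-proved odd/even sub-case by identifying a good first move $x$ and passing to $H_i^{+x}$ (or $H_i^{-y}$). The reason you cannot avoid the parity casework is precisely the point above: which role (first/second) is winning on each component depends on that component's parity, so the mirroring rule must sometimes be broken by a deliberate cross-over or an initial ``sacrificial'' move. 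Once you correct the meaning of $\SL$ and accept the parity case split, your outline becomes essentially the paper's proof.
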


\begin{proof}

The proof will distinguish cases depending on the outcomes of $H_1$ and $H_2$ as well as their parity. 

    \begin{enumerate}[label={\arabic*.}]
        \item $o(H_1) = \E$ (symmetric to the case $o(H_2) = \E$).

        Suppose that Enforcer is the last player on $H_1 \cup H_2$, and consider the following strategy. Enforcer plays arbitrary moves, all of them inside $H_2$. We can assume that, by the point all vertices of $H_2$ have been picked, Avoider has not yet filled an edge. Note that, since $o(H_1) = \E$, Enforcer has a winning strategy on $H_1$ as both first or second player. Therefore, according to Proposition~\ref{lem:many_moves}, he has a winning strategy on what remains of $H_1$, no matter the parity of the number of moves that Avoider has played in it. Enforcer can now follow this strategy to win the game.
        
        \item $o(H_1) = o(H_2) = \SL$.

        Suppose that Enforcer is the last player. We will give different strategies for Enforcer depending on the possible parities of $H_1$ and $H_2$:

        \begin{itemize}
            \item If $H_1$ and $H_2$ are both odd, then $H_1 \cup H_2$ is even, so Enforcer is the second player. Moreover, Enforcer has winning strategies $\Sigma_1$ and $\Sigma_2$ on $H_1$ and $H_2$ respectively as second player. Therefore, a winning strategy for Enforcer in $H_1 \cup H_2$ as second player is to play each move in the same hypergraph Avoider has just played in, according to $\Sigma_1$ or $\Sigma_2$ respectively. Avoider will fill an edge in one of the two hypergraphs: at the latest, this happens as she picks the last remaining vertex of $H_1$ or $H_2$.
            \item If $H_1$ is even and $H_2$ is odd, then $H_1 \cup H_2$ is odd, so Enforcer is the first player. Moreover, Enforcer has winning strategies $\Sigma_1$ on $H_1$ as first player and $\Sigma_2$ on $H_2$ as second player. Therefore, if Enforcer plays his first move in $H_1$ according to $\Sigma_1$ and then plays each subsequent move in the same hypergraph Avoider has just played in according to $\Sigma_1$ or $\Sigma_2$ respectively, Enforcer will win similarly to the previous case.
            	\\ The case where $H_1$ is odd and $H_2$ is even is symmetric.
            \item If $H_1$ and $H_2$ are both even, then $H_1 \cup H_2$ is even, so Enforcer is the second player. Moreover, Enforcer has winning strategies $\Sigma_1$ and $\Sigma_2$ on $H_1$ and $H_2$ respectively as first player. Consider the following strategy for Enforcer. Without loss of generality, assume that Avoider starts in $H_1$. Enforcer then plays his first move in $H_2$, according to $\Sigma_2$. We know from Proposition~\ref{lem:many_moves} that, if Avoider was the next player to pick a vertex in $H_1$, then Enforcer would have a winning strategy on the remainder of $H_1$. Therefore, Enforcer now plays each subsequent move in the same hypergraph Avoider has just played in, according to that strategy or to $\Sigma_2$ respectively. Avoider will fill an edge in one of the two hypergraphs: at the latest, this happens as she picks the last remaining vertex of $H_1$ or $H_2$.
        \end{itemize}

        In all cases, Enforcer wins on $H_1 \cup H_2$ as last player, $o(H_1 \cup H_2) = \E$.
        
        \item $o(H_1) = o(H_2) = \A$.

        Suppose that Avoider is the last player. We will give different strategies for Avoider depending on the possible parities of $H_1$ and $H_2$:
        
        \begin{itemize}
            \item If $H_1$ and $H_2$ are both even, then $H_1 \cup H_2$ is even, so Avoider is the second player. Moreover, Avoider has winning strategies $\Sigma_1$ and $\Sigma_2$ on $H_1$ and $H_2$ respectively as second player. Therefore, when playing on $H_1 \cup H_2$, Avoider can simply play each move in the same hypergraph Enforcer has just played in, according to $\Sigma_1$ or $\Sigma_2$ respectively. Since both hypergraphs are even, Avoider will play the last move in both of them, which ensures that no one plays twice in a row in $H_1$ or $H_2$. Therefore, for each $i \in \{1,2\}$, the game restricted to $H_i$ is a normal game where Avoider manages to avoid filling an edge.
            \item If $H_1$ is even and $H_2$ is odd, then $H_1 \cup H_2$ is odd, so Avoider is the first player. Moreover, Avoider has winning strategies $\Sigma_1$ on $H_1$ as second player and $\Sigma_2$ on $H_2$ as first player. Therefore, when playing on $H_1 \cup H_2$, Avoider can simply play her first move in $H_2$ according to $\Sigma_2$, and then play each subsequent move in the same hypergraph Enforcer has just played in, according to $\Sigma_1$ or $\Sigma_2$ respectively. Similarly to the previous case, she will avoid filling an edge in both hypergraphs and thus win the game.
            \\ The case where $H_1$ is odd and $H_2$ is even is symmetric.
            \item If $H_1$ and $H_2$ are both odd, then $H_1 \cup H_2$ is even, so Avoider is the second player. Moreover, Avoider has winning strategies $\Sigma_1$ and $\Sigma_2$ on $H_1$ and $H_2$ respectively as first player. Without loss of generality, assume Enforcer starts in $H_1$. Avoider plays her first move in $H_2$ according to $\Sigma_2$, and then plays each subsequent move in the same hypergraph Enforcer has just played in, using $\Sigma_2$ in $H_2$ and Proposition \ref{lem:many_moves} in $H_1$. Indeed, since the number of remaining vertices after the first round is even in both hypergraphs, Alice will play the last move in both, and the next move made in $H_1$ will be by Enforcer. After two Enforcer moves have been made in $H_1$, Avoider still has a winning strategy on what remains of $H_1$ by Proposition \ref{lem:many_moves}. In conclusion, except for the first two moves made in $H_1$ which that are taken care of by Proposition~\ref{lem:many_moves}, the order of moves in both hypergraphs will be normal and Avoider will avoid filling an edge in either.
            
        \end{itemize}

        In all cases, Avoider wins on $H_1 \cup H_2$ as last player, $o(H_1 \cup H_2) = \A$.
        
        \item $o(H_1) = \SL$ and $o(H_2) = \A$.

        First of all, we exhibit a winning strategy for Enforcer on $H_1 \cup H_2$ as second-to-last player. Enforcer plays arbitrary moves, all of them inside $H_2$. We can assume that, by the point all vertices of $H_2$ have been picked, Avoider has not yet filled an edge. It is now Enforcer's turn: he has not played a single move in $H_1$ yet, and the number of remaining vertices in $H_1$ is even since Enforcer is the second-to-last player. If $H_1$ is even, then Enforcer has a winning strategy on $H_1$ as first player and Avoider has played an even number of moves in $H_1$, so Enforcer wins according to Proposition~\ref{lem:many_moves}. If $H_1$ is odd, then Enforcer has a winning strategy on $H_1$ as second player and Avoider has played an odd number of moves in $H_1$, so Enforcer also wins according to Proposition~\ref{lem:many_moves}.

        Finally, it remains to show that Avoider wins on $H_1 \cup H_2$ as second-to-last player. To do so, we will distinguish between the possible parities of $H_1$ and $H_2$: 
        %\florian{Florian : preuve terminée le 05/04/24}

        \begin{itemize}

        	\item If $H_1$ is odd and $H_2$ is even, then $H_1 \cup H_2$ is odd, so Avoider is the second player. Moreover, Avoider has winning strategies $\Sigma_1$ and $\Sigma_2$ on $H_1$ and $H_2$ respectively as second player. When playing on $H_1 \cup H_2$, Avoider can, as long as possible, play each move in the same hypergraph Enforcer has just played in, according to $\Sigma_1$ or $\Sigma_2$ respectively. First suppose that all vertices in $H_2$ are picked before all vertices in $H_1$ are: in this case, parity ensures that Avoider can perform this strategy until the end, thus avoiding filling an edge in either hypergraph and winning the game. Now, suppose that Enforcer picks the last remaining vertex in $H_1$ before all vertices in $H_2$ have been picked. In particular, Avoider has successfully avoided filling an edge so far, and it is now her turn. The remainder of $H_2$ at this point is even, and Avoider has a winning strategy on it as second player, so she also has a winning strategy on it as first player by Theorem \ref{theo:last}. Therefore, Avoider can follow that strategy to win the game.

            \item If $H_1$ and $H_2$ are both even, then $H_1 \cup H_2$ is even, so Avoider is the first player. Since $o(H_1)=\SL$ and $H_1$ is even, there exists $x \in V(H_1)$ such that Avoider has a winning strategy as second player on $H_1^{+x}$.
            Note that Avoider does not have a winning strategy as first player on $H_1^{+x}$ by Proposition \ref{lem:many_moves}, so $o(H_1^{+x})=\SL$. Since $H_1^{+x}$ is odd and $H_2$ is even, the odd/even case which we have already addressed yields $o(H_1^{+x} \cup H_2)=\SL$, so Avoider has a winning strategy on $H_1 \cup H_2$ as second-to-last player where she picks $x$ as her first move.

            \item If $H_1$ and $H_2$ are both odd, then $H_1 \cup H_2$ is even, so Avoider is the first player. Since $o(H_2)=\A$ and $H_2$ is odd, there exists $x \in V(H_2)$ such that $o(H_2^{+x})=\A$. Since $H_1$ is odd and $H_2^{+x}$ is even, the odd/even case which we have already addressed yields $o(H_1 \cup H_2^{+x})=\SL$, so Avoider has a winning strategy on $H_1 \cup H_2$ as second-to-last player where she picks $x$ as her first move.
        
            \item If $H_1$ is even and $H_2$ is odd, then $H_1 \cup H_2$ is odd, so Avoider is the second player. As his first move, Enforcer picks $y \in V(H_i)$ for some $i \in \{1,2\}$. Note that, since Avoider has a winning strategy on $H_i$ as first player, she has a winning strategy on $H_i^{-y}$ as second player by Proposition \ref{lem:many_moves}, so $o(H_1^{-y})=\SL$ if $i=1$ or $o(H_2^{-y})=\A$ if $i=2$. Now, since Avoider has a winning strategy on $H_{3-i}$ as first player, there exists $x \in V(H_{3-i})$ such that Avoider has a winning strategy on $H_{3-i}^{+x}$ as second player. This means $o(H_1^{+x})=\SL$ if $i=2$ or $o(H_2^{+x})=\A$ if $i=1$. All in all, the odd/even case which we have already addressed yields $o(H_i^{-y} \cup H_{3-i}^{+x})=\SL$, so Avoider has a winning strategy after picking $x$. \qedhere
            
        \end{itemize}

    \end{enumerate}
\end{proof}

\begin{remark}
	When $o(H_1) = o(H_2) = \E$, not only does Enforcer win on $H_1 \cup H_2$ as last player, but he can actually force Avoider to fill two edges, one in each hypergraph. Indeed, we have $o(H_1^T) = o(H_2^T) = \A$ by Observation \ref{obs_dual}, hence $o(H_1^T \cup H_2^T)=\A$ by Theorem \ref{theo:union}, so Enforcer as last player can avoid filling a transversal in $H_1$ and $H_2$ at once, meaning Avoider fills an edge in both.
\end{remark}

\begin{corollary}\label{coro:isolated}
	Adding or removing any number of isolated vertices in a hypergraph does not modify the outcome.
\end{corollary}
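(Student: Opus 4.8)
The plan is to reduce the statement entirely to Theorem~\ref{theo:union}. The first step is to record the trivial fact that an edge-free hypergraph has outcome $\A$: if $J$ is a hypergraph with $E(J) = \varnothing$, then Avoider can never fill an edge of $J$ (there are none), so Avoider wins every play on $J$ no matter its parity or who moves first, hence $o(J) = \A$. In particular, for any $k \geq 0$, the hypergraph $I_k$ consisting of $k$ isolated vertices and no edges satisfies $o(I_k) = \A$.

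Next I would phrase the operation in question in terms of a disjoint union. Adding $k$ isolated vertices to a hypergraph $H$ produces exactly $H \cup I_k$, where $I_k$ is placed on $k$ fresh vertices so that $V(H) \cap V(I_k) = \varnothing$. Applying Theorem~\ref{theo:union} with $H_1 = H$ and $H_2 = I_k$ and reading off the column $o(H_2) = \A$ of the table gives $o(H \cup I_k) = o(H)$ in each of the three cases $o(H) \in \{\A, \SL, \E\}$; that is, the $\A$ row and column act as the identity. Removal of isolated vertices is just the inverse situation: if $H'$ is obtained from $H$ by deleting a set of isolated vertices, then $H = H' \cup I_k$ with $I_k$ edge-free, and the same invocation of Theorem~\ref{theo:union} yields $o(H) = o(H')$.

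There is no real obstacle here: the only ingredient beyond Theorem~\ref{theo:union} is the base fact $o(I_k) = \A$, which is immediate from the definition of the game. This fact also gracefully covers the degenerate case where $H$ itself has no edges (then $o(H) = \A$ as well, and everything remains consistent). The corollary therefore follows directly from the disjoint-union table, with no case analysis of its own.
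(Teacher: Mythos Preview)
Your proof is correct and follows essentially the same approach as the paper: observe that an edge-free hypergraph (in the paper, a single isolated vertex) has outcome $\A$, and then invoke Theorem~\ref{theo:union} to see that $\A$ is neutral for disjoint union. The only cosmetic difference is that you handle all $k$ isolated vertices at once via $I_k$, whereas the paper phrases it for one isolated vertex.
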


\begin{proof}
	An isolated vertex obviously has outcome $\A$, which we know is neutral for the disjoint union by Theorem \ref{theo:union}.
\end{proof}

\subsection{Some basic strategic principles}

\noindent A crucial property of the Maker-Breaker convention is subhypergraph monotonicity: if Maker wins on some subhypergraph of $H$, then Maker also wins on $H$. A similar result holds for Enforcer in the Avoider-Enforcer convention. Let us specify that we will never consider induced subhypergraphs in this paper: a subhypergraph of $H$ is anything that can be obtained from $H$ by removing vertices and/or edges.

\begin{proposition}[Monotonicity Principle]\label{prop:monotonicity}
	Let $H$ be a hypergraph, and let $H'$ be a subhypergraph of $H$. If Enforcer wins on $H'$ as last player (resp. as second-to-last player), then Enforcer wins on $H$ as last player (resp. as second-to-last player).
\end{proposition}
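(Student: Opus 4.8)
The plan is to reduce the statement about an arbitrary subhypergraph to two elementary operations — deleting a single edge (keeping all vertices) and deleting isolated vertices — and to treat each separately before composing.

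First I would introduce the auxiliary hypergraph $H''$ defined by $V(H'')=V(H)$ and $E(H'')=E(H')$. Since every edge of $H'$ is a subset of $V(H')$, no edge of $H''$ meets the set $W:=V(H)\setminus V(H')$. Hence $H''$ is exactly $H'$ with the isolated vertices of $W$ adjoined, and it is also exactly $H$ with the edges of $E(H)\setminus E(H')$ removed. So it suffices to establish the desired implication for the two passages $H'\to H''$ and $H''\to H$, and then compose them.

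For $H'\to H''$, I would invoke Corollary~\ref{coro:isolated}: since $H''$ is obtained from $H'$ by adding isolated vertices, $o(H'')=o(H')$. Because ``Enforcer wins as last player'' means precisely $o=\E$, and ``Enforcer wins as second-to-last player'' means precisely $o\in\{\E,\SL\}$, this equality of outcomes yields exactly (both directions of) the required implications. For $H''\to H$, I would note that $V(H'')=V(H)$, so the two games have the same parity and hence the same notion of who is the last (resp. second-to-last) player; Enforcer can then replay his winning strategy $\Sigma$ from $H''$ verbatim in the game on $H$, every prescribed move being legal and the sequence of picks being identical to the one it would generate on $H''$. When all vertices have been picked, optimality of $\Sigma$ forces the set of Avoider's picks to contain some $e\in E(H'')\subseteq E(H)$, so Avoider has filled an edge of $H$ and Enforcer wins.

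The only subtlety — and the reason for routing the vertex deletions through isolated vertices rather than doing a single ``remove one vertex or one edge'' induction — is parity: deleting a vertex flips the parity of the board and therefore interchanges which physical player plays last, which would break the alignment of hypotheses between the two cases. Corollary~\ref{coro:isolated} absorbs that flip cleanly, and once the vertex deletions are dealt with, the edge-deletion step is routine since it leaves the vertex set, and hence the turn structure, untouched.
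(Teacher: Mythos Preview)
Your proof is correct and follows essentially the same approach as the paper: both decompose the passage from $H'$ to $H$ into first adding the missing vertices as isolated vertices (handled by Corollary~\ref{coro:isolated}) and then adding the missing edges (handled by replaying Enforcer's strategy verbatim on the same vertex set). Your version is more detailed---you explicitly name the intermediate hypergraph $H''$ and spell out the strategy-replay argument---and your closing remark on parity nicely explains why the decomposition is ordered this way, but the underlying idea is identical.
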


\begin{proof}
	It is possible to build $H$ from the subhypergraph $H'$ in two steps, by adding the missing vertices first and then the missing edges. The first step is outcome-neutral by Corollary \ref{coro:isolated}. The second step cannot harm Enforcer: obviously, a winning Enforcer strategy on a given hypergraph also wins on any hypergraph with the same vertex set and additional edges.
\end{proof}

\noindent From Avoider's side of things, pairing strategies are a simple but powerful tool \cite{Hefetz2007}. A {\em pairing} in a hypergraph $H$ is a set $\Pi$ of pairwise disjoint pairs in $V(H)$ such that, for all $e \in E(H)$, there exists $\pi \in \Pi$ satisfying $\pi \subseteq e$. %If $H$ admits a pairing, then Avoider wins by forcing Enforcer to pick at least one vertex in each pair of the pairing, which ensures that Enforcer hits every edge by definition of a pairing:

\begin{proposition}[Pairing Principle]\label{prop:pairing}
	Let $H$ be a hypergraph. If $H$ admits a pairing, then $o(H)=\A$.
\end{proposition}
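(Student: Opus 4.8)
The plan is to show that a pairing $\Pi$ in $H$ gives Avoider a winning strategy as last player; by Theorem~\ref{theo:last} this yields $o(H)=\A$. First I would have Avoider use the classical "follow the pair" response: whenever Enforcer picks a vertex $v$, if $v$ belongs to some pair $\pi=\{v,w\}\in\Pi$ and $w$ is still available, Avoider picks $w$; in every other case (i.e. $v$ is unpaired, or $v$'s partner has already been picked, or it is Avoider who must move without a pending pair to complete) Avoider picks an arbitrary available vertex. Since the pairs in $\Pi$ are pairwise disjoint, this is a well-defined strategy, and the key invariant I would maintain is: after each of Avoider's moves, for every pair $\pi\in\Pi$, Avoider has picked at least as many vertices of $\pi$ as Enforcer has. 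More precisely, no pair ever has both of its vertices picked by Enforcer.

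Next I would verify that invariant by induction on the rounds. The only way a pair $\pi=\{v,w\}$ could end up entirely in Enforcer's hands is if Enforcer picks one of them, say $v$, at a moment when the other, $w$, is already picked by Enforcer — but then $w$ was picked on some earlier Enforcer move, at which time $v$ was still available (it is being picked only now), so Avoider's strategy would have responded to that earlier move by picking $v$ herself, a contradiction. The subtlety here — the step I expect to be the main obstacle — is the bookkeeping around who moves first and the "arbitrary" moves: I need to make sure that when Enforcer picks $v\in\pi$ and the partner $w$ is still free, the partner really is still free at the moment Avoider replies (it cannot have been taken by Avoider as an arbitrary move, since Avoider only plays arbitrarily when no pending pair needs completing, and it cannot have been taken by Enforcer by the invariant). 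One clean way to handle this is to observe that the set of "Avoider-owned" vertices, restricted to $\bigcup\Pi$, always forms a transversal of the hypergraph of pairs completed so far, and to argue Enforcer can never be the one to close a pair.

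Finally I would conclude: at the end of the game every vertex is picked, and for each $\pi\in\Pi$ the invariant forces at least one vertex of $\pi$ to be Avoider's. Now take any edge $e\in E(H)$; by the definition of a pairing there is some $\pi\in\Pi$ with $\pi\subseteq e$, and since Avoider owns a vertex of $\pi$, she owns a vertex of $e$, so $e$ is not filled by Avoider. As this holds for every edge, Avoider wins; hence Avoider wins on $H$ as last player, and by Theorem~\ref{theo:last} also as second-to-last player, so $o(H)=\A$. (Edge case: if $H$ admits a pairing then $H$ has no empty edge, since $\varnothing$ cannot contain a pair, so the convention "Enforcer wins if $\varnothing\in E(H)$" does not interfere.)
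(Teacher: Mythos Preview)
There is a genuine gap: your final implication is backwards. You conclude that because Avoider owns a vertex of each pair $\pi\subseteq e$, she ``owns a vertex of $e$, so $e$ is not filled by Avoider.'' But an edge is filled by Avoider precisely when she owns \emph{all} of its vertices; her owning one vertex of $e$ is no obstacle whatsoever to her owning all of them. What Avoider actually needs is that \emph{Enforcer} owns at least one vertex of each pair, so that Enforcer owns a vertex of each edge and hence no edge is entirely Avoider's.

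This is not merely a slip in the last line: your invariant is set up for the wrong conclusion, and your strategy does not deliver the right one. Your invariant ``no pair is entirely Enforcer's'' is indeed correctly established by the response rule, but it is useless here. And with ``pick an arbitrary available vertex'' as the fallback, nothing prevents Avoider from picking \emph{both} vertices of some pair as two separate arbitrary moves (neither pick creates a pending response), after which that pair --- and possibly the edge containing it --- is entirely Avoider's. The fix, which is what the paper does, is to restrict Avoider's fallback moves to vertices \emph{outside} $\bigcup\Pi$, and then to argue via parity that such a vertex is always available: since Avoider is last, the number of remaining vertices is odd whenever it is her turn, while under the response rule the number of remaining paired vertices is even whenever no pair is pending, so some unpaired vertex must remain. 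With this refinement Enforcer is forced to be the first to enter each pair, yielding the correct invariant that Enforcer owns a vertex of every pair.
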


\begin{proof}
	Let $\Pi$ be a pairing of $H$. It suffices to address the case where Avoider is the last player. Her strategy is as follows: whenever Enforcer has just picked some $y \in \{x,y\} \in \Pi$, she answers by picking $x$, otherwise she picks an arbitrary vertex outside $\Pi$. Since $\Pi$ involves an even number of vertices and Avoider is the last player, she can always follow this strategy and Enforcer is forced to play first inside each pair from $\Pi$. At the end of the game, Enforcer has picked at least one (actually: exactly one) vertex in each pair from $\Pi$, which ensures that he has filled a transversal of $H$ by definition of a pairing. Therefore, Avoider wins.
\end{proof}

\noindent This principle may also be applied locally, for instance if we identify a pair of vertices that are equivalent in the following sense:

\begin{definition}
	Let $H$ be a hypergraph, and let $x,y \in V(H)$ be distinct. We say $x$ and $y$ are {\em indistinguishable} in $H$ if the following two conditions are both satisfied:
	\begin{itemize}[noitemsep,nolistsep]
		\item For all $e \in E(H)$ such that $x \in e$ and $y \not\in e$, we have $(e \setminus \{x\}) \cup \{y\} \in E(H)$.
		\item For all $e \in E(H)$ such that $y \in e$ and $x \not\in e$, we have $(e \setminus \{y\}) \cup \{x\} \in E(H)$.
	\end{itemize}
\end{definition}

If two vertices are indistinguishable, we can assume that the two players will each pick one of the vertices. This is the principle of the so-called “Super Lemma”, proved in Oijid's PhD thesis \cite{oijid2024}. For the sake of completeness, we give a proof here.
Note that if $x$ and $y$ are indistinguishable vertices of an hypergraph $H$, then $H^{+x-y}=H^{+y-x}$. This hypergraph corresponds to the hypergraph  obtained from $H$ as follows: $x$ and $y$ are removed, any edge containing both $x$ and $y$ is removed, and any edge $e$ containing exactly one of $x$ or $y$ is replaced by $e \setminus \{x,y\}$.

\begin{proposition}[Super Lemma \cite{oijid2024}]\label{prop:reduced}
	Let $H$ be a hypergraph, and let $x,y \in V(H)$ be indistinguishable. Then $o(H)=o(H^{+x-y})$.
\end{proposition}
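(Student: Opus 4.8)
The plan is to show that a winning strategy on one of the hypergraphs $H$ and $H^{+x-y}$ can be transferred to a winning strategy on the other, by having the relevant player treat the pair $\{x,y\}$ as if it behaves like a single "merged" move. Since $o$ has three possible values ($\A$, $\E$, $\SL$), and since by Theorem~\ref{theo:last} the outcome is entirely determined by who wins as last player, it suffices to prove two things for each of the two players: that if Avoider (resp. Enforcer) wins on $H$ as last player then she (resp. he) wins on $H^{+x-y}$ as last player, and conversely. Note that $H$ and $H^{+x-y}$ have different parities (two vertices removed), so "last player on $H$" and "last player on $H^{+x-y}$" correspond to the \emph{same} player in terms of move order (first or second); this will make the bookkeeping cleaner. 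By Observation~\ref{obs_dual} it would in fact suffice to handle one of the two players, since $x$ and $y$ are also indistinguishable in $H^T$ — but it may be just as quick to argue both sides directly.

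The key mechanism I would use is a pairing-type coupling restricted to $\{x,y\}$. Suppose a player $P$ (say Enforcer) has a winning strategy $\Sigma$ on $H^{+x-y}$ as last player, and we want a winning strategy on $H$ as last player. Enforcer runs a fictitious game on $H^{+x-y}$ alongside the real game on $H$. As long as neither $x$ nor $y$ has been picked, every real move outside $\{x,y\}$ is copied to the fictitious game and $\Sigma$'s answer is copied back. The first time one of $x,y$ is picked in the real game — say $x$ is picked by one of the players — Enforcer mentally "pairs it off": he will arrange that the very next move he makes (or, if it is Avoider's turn and Avoider is about to move, he waits one move) is to pick $y$, unless Avoider picks $y$ first. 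Concretely, whoever picks $x$, Enforcer commits to picking $y$ at his next opportunity if Avoider has not already done so; and in the fictitious game these two moves (the $x$-move and the $y$-move) are simply ignored, since $H^{+x-y}$ has no vertices $x,y$. The point is that indistinguishability guarantees that, in $H$, an edge $e$ survives being filled by Avoider if and only if its trace $e \setminus \{x,y\}$ survives in the fictitious game: an edge containing exactly one of $x,y$ is filled by Avoider in $H$ iff Avoider picked that vertex and all of $e \setminus \{x,y\}$, and since by the coupling the two players pick one of $x,y$ each (in whatever order), exactly the "right" outcome is mirrored in the fictitious game where $e \setminus \{x,y\}$ is an edge; an edge containing both $x$ and $y$ can never be filled by Avoider in $H$ since each player gets exactly one of the two. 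So Avoider fills an edge in the real game iff she fills the corresponding edge in the fictitious game, and $\Sigma$'s optimality transfers.

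For the converse direction — transferring a winning strategy from $H$ to $H^{+x-y}$ — the same coupling is run the other way: the player follows $\Sigma'$ on $H$, and maintains the invariant that in the fictitious game on $H$, the set of picked vertices is always the set of picked vertices in the real game on $H^{+x-y}$, \emph{plus} exactly one of $x,y$ for each player who has "used up" their turn on the pair; whenever $\Sigma'$ on $H$ tells the player to pick $x$ or $y$, the real move is instead an arbitrary unpicked "real" vertex, and conversely a forced confrontation with the pair is resolved by the pairing discipline. The main obstacle, and the thing to be careful about, is precisely the turn-order bookkeeping around the moment the pair $\{x,y\}$ gets resolved: one must check that the player controlling the strategy can always enforce "each of us takes exactly one of $x,y$", that this never forces an illegal or outcome-harming move, and that the parity offset between $H$ and $H^{+x-y}$ is consistent with the claim "last player maps to last player" throughout. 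Once the invariant is set up correctly, the edge-by-edge correspondence from indistinguishability does the rest, and the equality $o(H) = o(H^{+x-y})$ follows since last-player wins determine the outcome by Theorem~\ref{theo:last}.
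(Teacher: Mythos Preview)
Your core mechanism --- run a fictitious game on $H^{+x-y}$ and, whenever the opponent picks one of $x,y$ in the real game on $H$, answer by picking the other --- is exactly the paper's argument. Two corrections and one simplification are in order.

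First, a slip: removing two vertices preserves parity, so $H$ and $H^{+x-y}$ have the \emph{same} parity, not different parities. Your conclusion (that ``last player'' corresponds to the same first/second role in both games) is correct, but the stated reason is backwards.

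Second, and more importantly: you do not need the reverse direction at all, and your sketch of it (``whenever $\Sigma'$ on $H$ tells the player to pick $x$ or $y$, the real move is instead an arbitrary unpicked vertex'') does not obviously maintain the invariant you claim --- once you substitute an arbitrary vertex $v$ for $x$, the fictitious and real games desynchronise in a way your description does not repair. The paper sidesteps this entirely: for a fixed choice of last player, let $P$ be whichever player wins on $H'$, and show that $P$ also wins on $H$. Since exactly one player wins in each ordering, this already forces the winners to coincide, hence $o(H)=o(H')$. One direction suffices.

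Finally, the paper notes one edge case you gloss over: if $x$ and $y$ are the only two vertices left and it is $P$'s turn (so $P$ is second-to-last), $P$ simply picks one of them arbitrarily; the opponent is then forced to take the other, and the split is preserved.
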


\begin{proof} Let $H'=H^{+x-y}=H^{+y-x}$.
	Fix an order (i.e. choose the last player), and let P be the player who has a winning strategy $\Sigma$ on $H'$ for that order. We want to show that P also has a winning strategy on $H$ for the same order. Note that the first player is the same in both games since $H$ and $H'$ have the same parity. Therefore, P can play a fictitious game on $H'$ following the strategy $\Sigma$ and copy the moves in the real game played on $H$, except when the opponent picks $x$ or $y$ in which case P picks the other. If $x$ and $y$ are the only vertices remaining in the real game and P is second-to-last, then P picks one arbitrarily. In all cases, the vertices $x$ and $y$ end up being shared between both players in the real game: by definition of $H'$, this implies that Avoider fills an edge of $H$ in the real game if and only if Avoider fills an edge of $H'$ in the fictitious game. By optimality of $\Sigma$, this ensures that P wins.
\end{proof}

%\begin{proposition}
%	Let $H$ be a hypergraph, and suppose that there exist distinct $x,y \in V(H)$ such that each edge of $H$ that contains $x$ or $y$ also contains the other. Then deleting $x$ and $y$ does not modify the outcome i.e. $o(H)=o(H^{-x-y})$.
%\end{proposition}
%
%\begin{proof}
%	Fix an order (i.e. choose the last player), and let P be the player who has a winning strategy $\Sigma$ on $H^{-x-y}$ for that order. We want to show that P also has a winning strategy on $H$ for the same order. Note that the first player is the same in both games since $H$ and $H^{-x-y}$ have the same parity. Therefore, P can simply play on $H$ following the strategy $\Sigma$, except when the opponent picks $x$ or $y$ in which case P picks the other. If $x$ and $y$ are the only vertices remaining and P is second-to-last, then P picks one arbitrarily. In all cases, each player picks exactly one vertex in $\{x,y\}$. If P is Enforcer, then Avoider has filled an edge of $H^{-x-y}$ by optimality of $\Sigma$, which is also an edge of $H$ so Enforcer has won. If P is Avoider, then Avoider has not filled any edge of $H^{-x-y}$ by optimality of $\Sigma$, moreover she has not filled any edge in $E(H) \setminus E(H^{-x-y})$ either since those edges contain both $x$ and $y$ (one of which has been picked by Enforcer), so Avoider has won.
%\end{proof}

\noindent Thanks to Proposition \ref{prop:reduced}, hypergraphs can always be assumed to have no pairs of indistinguishable vertices, up to some polynomial-time preprocessing.

\section{Hypergraphs of rank~2}\label{sec:rank2}

\noindent In this section, we solve the game on hypergraphs of rank 2. Note that, by Proposition \ref{prop:1-edge}, the case of hypergraphs of rank 2 reduces to the case of 2-uniform hypergraphs i.e. graphs. Therefore, we will only consider graphs in this section.

\subsection{Some elementary graphs and their outcome}

\noindent We recall some vocabulary from the literature which will be useful, along with some notations. For any graph $G$, we denote by $2G$ the graph formed by two vertex-disjoint copies of $G$. We denote by $P_n$ and $C_n$ the {\em path} and the {\em cycle} on $n$ vertices respectively. If $x_1,\ldots,x_n$ are consecutive vertices along a path $P_n$ (resp. a cycle $C_n$), then that path (resp. cycle) may be denoted as $x_1 x_2 \cdots x_n$. The {\em bull} is the graph obtained by adding a pendent edge to two vertices of $C_3$ (see Figure \ref{fig:example_graphs}, left). The {\em 3-sunlet} is the graph obtained by adding a pendent edge to all vertices of $C_3$ (see Figure \ref{fig:example_graphs}, middle). A {\em pseudo-star} is any connected graph $G$ such that $|V(G)| \geq 3$ and containing a vertex $y$ such that all connected components of $G^{-y}$ are of size 1 or 2. An example pseudo-star is given on the right of Figure \ref{fig:example_graphs}. Note that $C_3$, $P_3$, $P_4$ and $P_5$ are pseudo-stars.

\begin{figure}[h]
	\centering
	\includegraphics[scale=.55]{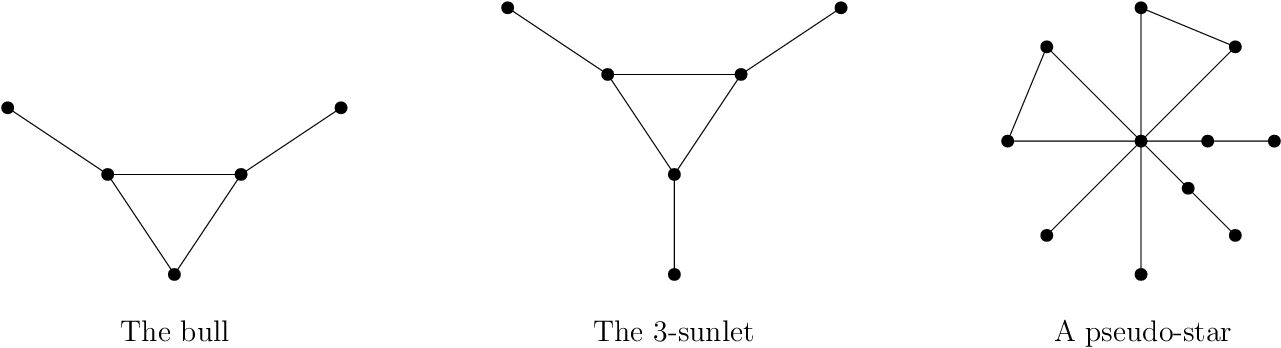}
	\caption{Some elementary graphs.}\label{fig:example_graphs}
\end{figure}

\begin{proposition}\label{prop:graphs}
    $P_1$ and $P_2$ have outcome $\A$. The bull, $C_5$ and pseudo-stars have outcome $\SL$. The 3-sunlet, $C_4$ and $2P_3$ have outcome $\E$.
\end{proposition}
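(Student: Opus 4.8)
The plan is to verify each of the nine outcome claims by exhibiting explicit strategies, using the general machinery from Section~\ref{sec:general} to cut down the case analysis wherever possible. I would organize the proof as three blocks, one per outcome value, and within each block treat the small graphs by hand.

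\medskip

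\textbf{Outcome $\A$.} The graphs $P_1$ and $P_2$ are immediate: $P_1$ is a single isolated vertex, so its outcome is $\A$ by Corollary~\ref{coro:isolated} (or directly, since there is no edge to fill); $P_2$ is a single $2$-edge, which admits the trivial pairing $\Pi = \{V(P_2)\}$, so $o(P_2) = \A$ by the Pairing Principle (Proposition~\ref{prop:pairing}).

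\medskip

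\textbf{Outcome $\SL$.} Here I must show two things for each graph $G$: that the second-to-last player wins, and that neither Avoider nor Enforcer wins in \emph{all} cases (equivalently, that $o(G) \neq \A$ and $o(G) \neq \E$). For the pseudo-stars, the idea is: let $y$ be the distinguished vertex, so that $G^{-y}$ is a disjoint union of $P_1$'s and $P_2$'s. Avoider as second-to-last player simply never picks $y$, forcing the last player (Enforcer) to take it; what remains is $G^{-y}$, which has outcome $\A$ (pairing on the $P_2$'s, ignore the $P_1$'s), and Avoider wins there — this is exactly Proposition~\ref{prop:lastmove} applied with the vertex $y$. For the other direction, Enforcer as second-to-last player can force Avoider to pick $y$ first (Proposition~\ref{prop:lastmove} with $x = y$, since $G^{+y}$ contains the empty edge whenever $\deg(y)\ge 1$... more carefully: $G^{+y}$ has the $2$-edges incident to $y$ turned into $1$-edges, and one checks Enforcer wins on $G^{+y}$ as last player since the remaining $1$-edges and $2$-edges form something Enforcer handles); alternatively just note that once Avoider is forced to own $y$, every edge through $y$ needs only one more Avoider vertex, and Enforcer can maneuver. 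Then $C_5$, $P_3$, $P_4$, $P_5$, and the bull need to be placed: $P_3, P_4, P_5$ are noted to be pseudo-stars so they are covered; $C_5$ and the bull must be done by direct (small) case analysis showing the second-to-last player wins and the game is not decided for one player — $C_5$ is odd (5 vertices) so ``second-to-last player wins'' means the first player wins, and one exhibits a first-player Avoider win and a first-player Enforcer win... wait, $\SL$ on an odd graph means the \emph{first} player wins, so I exhibit first-player winning strategies for both Avoider and Enforcer on $C_5$ and verify the second player loses in each. The bull has 5 vertices, same parity discussion.

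\medskip

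\textbf{Outcome $\E$.} For $2P_3$: by Theorem~\ref{theo:union}, $o(2P_3) = o(P_3 \cup P_3)$, and since $o(P_3) = \SL$ (just established), the table gives $o(P_3 \cup P_3) = \E$. This is the cleanest case. For $C_4$ and the $3$-sunlet, I would argue directly that Enforcer wins as last player (hence also as second-to-last, by Theorem~\ref{theo:last}). $C_4$ has $4$ vertices: Enforcer is the last (= second) player; whatever Avoider does, after her first move two adjacent vertices remain available forming part of an edge, and Enforcer steers Avoider into completing one of the two ``opposite'' edges — a short exhaustive check over Avoider's $4$ symmetric first moves suffices. The $3$-sunlet has $6$ vertices; Enforcer is the last (= second) player. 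I would look for a slick argument: the three pendant edges are vertex-disjoint $2$-edges, so by a pairing-type argument from Enforcer's perspective (he is ``avoiding a transversal''), Enforcer can guarantee Avoider takes a whole pendant edge unless Avoider devotes enough moves to the triangle, in which case Avoider fills the triangle — formalizing this via Observation~\ref{obs_dual} and the disjoint-union theorem applied to $(\text{3-sunlet})^T$ may actually be the quickest route, but a direct move-by-move analysis is also feasible.

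\medskip

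\textbf{Main obstacle.} The routine but genuinely fiddly part is the direct case analysis for $C_5$, the bull, $C_4$, and the $3$-sunlet — establishing the ``$\SL$'' verdicts requires checking \emph{both} that the second-to-last player wins \emph{and} that the other player has no winning strategy in the opposite order, which doubles the bookkeeping. I expect the pseudo-star argument (reducing via $G^{-y}$ to a pairing) to be the conceptual heart and the part worth writing carefully; the small ad hoc cases I would compress into a few lines each or defer to a figure, and I would lean on Theorem~\ref{theo:union} for $2P_3$ and possibly for the $3$-sunlet to minimize hand computation.
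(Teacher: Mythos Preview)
Your overall shape is right, but there is one concrete error and one missed simplification that together make the $\SL$ block harder than it needs to be.

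\textbf{Parity slip on $C_5$ and the bull.} On an odd hypergraph the \emph{last} player is the \emph{first} player, so the second-to-last player is the \emph{second} player. Thus $o(C_5)=\SL$ means the \emph{second} player wins in both orders, not the first. Concretely: when Avoider plays first on $C_5=xyztu$, she must pick three of the five vertices, and after her first pick (say $x$) both $\{y\}$ and $\{u\}$ become $1$-edges in the updated hypergraph; since she still has to pick two of the four remaining vertices she cannot avoid both, so she \emph{loses} as first player. Your plan to ``exhibit first-player winning strategies for both Avoider and Enforcer'' would therefore fail immediately. The same reversal applies to the bull.

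\textbf{Missed shortcut via Monotonicity.} The paper avoids half of your $\SL$ bookkeeping with one observation: every graph in the list $\{C_4,C_5,\text{bull},\text{3-sunlet},\text{pseudo-star}\}$ contains $P_3$ as a subgraph, and since $o(P_3)=\SL$ (checked in two lines), Proposition~\ref{prop:monotonicity} gives for free that Enforcer wins as second-to-last on all of them. After that, the only thing left is to decide, for each graph, who wins when \emph{Avoider} is second-to-last: if Avoider wins, the outcome is $\SL$; if Enforcer still wins, the outcome is $\E$. This is exactly one case per graph, not two, and it also means you never need the somewhat vague $G^{+y}$ argument for pseudo-stars on the Enforcer side. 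Your $G^{-y}$ pairing argument for the Avoider side of pseudo-stars is the right idea and matches the paper; your treatment of $2P_3$ via Theorem~\ref{theo:union} is also exactly what the paper does.
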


\begin{proof}
	Clearly, $o(P_1)=o(P_2)=\A$. It is also straightforward that $o(P_3)=\SL$: indeed, Avoider wins as second-to-last player because she only picks one vertex in total, and Enforcer wins as second-to-last player by picking a vertex of degree 1 whatever Avoider's first pick is. As a consequence, $o(2P_3)=\E$ by Theorem \ref{theo:union}. Since $C_4$, $C_5$, the bull, the 3-sunlet and pseudo-stars all contain $P_3$ as a subgraph, Enforcer wins on all these graphs as second-to-last player by Corollary \ref{prop:monotonicity} (Monotonicity Principle), so assume from now that Avoider is the second-to-last player.
	\begin{itemize}
	    \item[--] Let $xyzt$ be a $C_4$. Since $C_4$ is even, Avoider plays first. By symmetry, we can assume that Avoider starts by picking $x$. Enforcer then picks $z$, and Avoider loses since her next pick will fill the edge $xy$ or the edge $xt$.
	    \item[--] Let $xyztu$ be a $C_5$. Since $C_5$ is odd, Enforcer plays first. By symmetry, we can assume that Enforcer starts by picking $x$. Avoider then picks $y$, and wins by picking either $t$ or $u$ in the second round.
	    \item[--] Consider a bull $B$ formed by a $C_3$ $xyz$ plus edges $xu$ and $yv$. Since $B$ is odd, Enforcer plays first. After Enforcer's first pick, $x$ and $u$ are both free, or $y$ and $v$ are both free. By symmetry, assume the former. Avoider then picks $u$, and wins by not picking $x$ in the second round.
	    \item[--] Consider a 3-sunlet $S$ formed by a $C_3$ $xyz$ plus edges $xu$, $yv$ and $zw$. Since $S$ is even, Avoider plays first. If Avoider plays inside the cycle before Enforcer does, then she loses, since Enforcer will then force her to pick one of the other two vertices of the cycle eventually. Therefore, by symmetry, assume Avoider starts by picking $u$. Enforcer then picks $v$, and Avoider picks $w$. Enforcer finally picks $y$, and Avoider loses since her next pick will fill the edge $xu$ or the edge $zw$.
	    \item[--] Lastly, consider a pseudo-star $G$. By definition, there exists a vertex $y$ such that each component of $G^{-y}$ is a $P_1$ or a $P_2$. Therefore, the edges of $G^{-y}$ form a pairing of $G^{-y}$, so $o(G^{-y})=\A$ by Proposition \ref{prop:pairing} (Pairing Principle). Proposition \ref{prop:lastmove} thus ensures that Avoider wins on $G$ as second-to-last player. \qedhere
	\end{itemize}
\end{proof}

\subsection{Complete structural characterisation of the outcome}

\noindent Using the previous elementary examples, we are going to establish the following result, which solves the game for general graphs.

\begin{theorem}\label{theo:graphs}
    Let $G$ be a graph.
    \begin{itemize}[noitemsep,nolistsep]
		\item If $G$ does not contain $P_3$ as a subgraph, then $o(G)=\A$;
		\item If $G$ contains $2P_3$, $C_4$ or the 3-sunlet as a subgraph, then $o(G)=\E$;
		\item In all other cases, $o(G)=\SL$.
	\end{itemize}
\end{theorem}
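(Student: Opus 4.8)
The plan is to prove the three cases as a clean trichotomy, in increasing order of difficulty, leaning heavily on Theorem~\ref{theo:union} (outcome of disjoint unions), the Monotonicity Principle (Proposition~\ref{prop:monotonicity}), the Pairing Principle (Proposition~\ref{prop:pairing}), and the elementary computations of Proposition~\ref{prop:graphs}. The first bullet is immediate: if $G$ has no $P_3$, then every connected component is a single vertex or a single edge, so the edges form a pairing of $G$ and $o(G)=\A$ by Proposition~\ref{prop:pairing}. The second bullet is also quick: $2P_3$, $C_4$ and the $3$-sunlet all have outcome $\E$ by Proposition~\ref{prop:graphs}, so if $G$ contains one of them as a subgraph then $o(G)=\E$ by the Monotonicity Principle. (Here one should note $\E$ is the only outcome for which monotonicity gives ``$=$'': Proposition~\ref{prop:monotonicity} gives Enforcer a win both as last and as second-to-last player, hence $o(G)=\E$.) So the work is entirely in the third bullet.

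For the third bullet we must show: if $G$ contains $P_3$ but does \emph{not} contain $2P_3$, $C_4$ or the $3$-sunlet, then $o(G)=\SL$. First I would establish $o(G)\ne\A$: since $G$ contains $P_3$, which has outcome $\SL$ and in particular is a win for Enforcer as second-to-last player, the Monotonicity Principle gives that Enforcer wins on $G$ as second-to-last player, so $o(G)\in\{\SL,\E\}$. It therefore remains to prove $o(G)\ne\E$, i.e.\ that Avoider wins on $G$ as second-to-last player. The key structural step is to classify the graphs $G$ satisfying the hypothesis. Let $G_0$ be the (unique, by Theorem~\ref{theo:union} and Corollary~\ref{coro:isolated}) ``essential part'' obtained by discarding isolated vertices; the outcome is unchanged. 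Since $G$ has no $2P_3$, at most one component of $G_0$ contains a $P_3$, and every other component is a single edge (no $P_3$ there). A component that is just an edge has outcome $\A$, which is neutral for the disjoint union (Theorem~\ref{theo:union}), so $o(G)=o(G^*)$ where $G^*$ is the unique component containing a $P_3$. Now I must show that a \emph{connected} graph $G^*$ containing $P_3$ but avoiding $C_4$ and the $3$-sunlet has outcome $\SL$, equivalently is an Avoider win as second-to-last player. The claim I would aim to prove is that such a $G^*$ is necessarily a pseudo-star, $C_5$, or the bull — all of which have outcome $\SL$ by Proposition~\ref{prop:graphs}.

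The combinatorial heart of the argument is thus: \emph{a connected graph containing $P_3$, with no induced-or-not $C_4$ and no $3$-sunlet subgraph, is a pseudo-star, the bull, or $C_5$.} I would prove this by a short case analysis on the girth / cycle structure. If $G^*$ is a tree: pick a longest path; if it has $\ge 6$ vertices it contains $2P_3$ — wait, that's excluded separately, but in a connected graph a $P_6$ does not contain $2P_3$, so instead I argue directly that a tree which is not a pseudo-star must contain the $3$-sunlet's tree-analogue or a configuration forcing $C_4$-free/$3$-sunlet-free to fail; concretely, if $G^*$ is a tree that is not a pseudo-star then some vertex $y$ fails the pseudo-star condition for \emph{every} choice, and one extracts either a path $P_6$ with a pendant (a ``spider''/``caterpillar'' argument) — this is the place where I expect the bookkeeping to be fiddliest. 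If $G^*$ has a cycle $C_k$: $k=4$ is excluded; if $k=3$, a triangle with three pendant edges is exactly the $3$-sunlet (excluded), a triangle with two pendant edges at distinct vertices is the bull (allowed, outcome $\SL$), a triangle with at most one pendant edge or with a longer pendant path must be checked to be a pseudo-star or to create a $C_4$/$3$-sunlet; if $k=5$, $C_5$ itself is allowed, and adding anything to $C_5$ creates a $C_4$ (a chord) or a $3$-sunlet-like structure; $k\ge 6$ gives $2P_3$ inside the cycle, excluded. \textbf{The main obstacle} is this structural classification — in particular the tree case and the ``$C_3$ plus pendant paths'' case, where one must carefully enumerate the ways a connected graph can avoid all three forbidden configurations and check each resulting shape is a pseudo-star (or the bull, or $C_5$); the game-theoretic content, by contrast, is entirely absorbed into Propositions~\ref{prop:graphs}, \ref{prop:monotonicity}, \ref{prop:pairing} and Theorem~\ref{theo:union}.
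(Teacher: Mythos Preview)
Your overall architecture is exactly the paper's: the first two bullets are dispatched by pairing and monotonicity, and the third bullet is reduced (via Theorem~\ref{theo:union}) to a purely structural claim about the unique nontrivial component, namely that a connected graph containing $P_3$ but avoiding $2P_3$, $C_4$ and the $3$-sunlet must be a pseudo-star, the bull, or $C_5$. This is precisely Lemma~\ref{lem:graphs}, and once it is in hand, Proposition~\ref{prop:graphs} finishes.

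There is, however, a concrete slip in your structural sketch that is worth fixing because it actually \emph{unblocks} the tree case. You write ``in a connected graph a $P_6$ does not contain $2P_3$'': this is false. The path $x_1x_2x_3x_4x_5x_6$ contains the vertex-disjoint paths $x_1x_2x_3$ and $x_4x_5x_6$, so $P_6$ (and likewise any $C_k$ with $k\ge 6$, as you correctly note later) does contain $2P_3$ as a subgraph. Consequently, in the tree case the longest path has at most $5$ vertices, and a tree of diameter at most $4$ with a $P_3$ is immediately a pseudo-star (take $y$ to be the centre of a longest path). So the tree case, which you flagged as the fiddliest, becomes one line once the $P_6$ confusion is cleared up. For the non-tree case, the paper's split is slightly cleaner than yours: rather than branching on the girth, it branches on ``path / cycle / has a vertex of degree $\ge 3$'', and in the last case locates a $P_3$ in $G^{-x}$ and counts how many of the three chosen neighbours of $x$ it meets; this avoids enumerating pendant configurations on $C_3$ and $C_5$ separately.
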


\begin{corollary}\label{coro:graphs}
    The outcome of the Avoider-Enforcer game can be computed in polynomial time on hypergraphs of rank 2.
\end{corollary}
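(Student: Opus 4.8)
The plan is to derive the corollary directly from the structural characterisation of Theorem \ref{theo:graphs}, observing that every condition occurring in it can be tested in polynomial time, and to prepend a polynomial-time reduction from general rank-2 hypergraphs to genuine graphs. The reduction matters because ``rank $2$'' allows empty edges and $1$-edges, which Theorem \ref{theo:graphs} does not directly address.

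First I would dispose of the non-$2$-uniform edges. If $\varnothing \in E(H)$ then $o(H)=\E$ by convention, a linear-time check. Otherwise, if $H$ contains a $1$-edge $\{y\}$, I would invoke Proposition \ref{prop:1-edge}: it gives $o(H) \neq \A$, and tells us that Avoider wins on $H$ as second-to-last player if and only if Avoider wins on $H^{-y}$ as last player. By the three-outcome classification following Theorem \ref{theo:last}, ``Avoider wins on $K$ as last player'' is exactly the statement $o(K)=\A$, for any hypergraph $K$. Hence $o(H)=\SL$ when $o(H^{-y})=\A$ and $o(H)=\E$ otherwise, so it suffices to decide, in polynomial time, whether a rank-2 hypergraph has outcome $\A$. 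This is itself easy: by the convention and by the first item of Proposition \ref{prop:1-edge}, the presence of an empty edge or of any $1$-edge already forces outcome different from $\A$; and if the hypergraph has only $2$-edges then it is a graph, for which Theorem \ref{theo:graphs} gives outcome $\A$ precisely when it contains no $P_3$. All three tests run in polynomial time, so the $1$-edge reduction is polynomial and we are left with genuine graphs.

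For a graph $G$ (we may discard isolated vertices by Corollary \ref{coro:isolated} without affecting the outcome), Theorem \ref{theo:graphs} determines $o(G)$ from two membership tests: whether $G$ contains $P_3$ as a subgraph, and whether $G$ contains one of $2P_3$, $C_4$, or the $3$-sunlet as a subgraph. The key observation is that each target is a \emph{fixed} graph on at most six vertices. Detecting any fixed graph $F$ on $k$ vertices as a (not necessarily induced) subgraph of $G$ can therefore be done by brute force: enumerate all $\binom{|V(G)|}{k}=O(|V(G)|^k)$ vertex subsets and, for each, check in constant time whether the edges among them contain a copy of $F$. With $k\le 6$ this is $O(|V(G)|^6)$, hence polynomial. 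One may of course replace this crude bound by faster ad hoc tests ($P_3$ amounts to a vertex of degree at least $2$, and $C_4$ to a pair of vertices with at least two common neighbours), but the brute-force bound already suffices. Combining these tests with the reduction of the previous paragraph yields a polynomial-time procedure computing $o(H)$ for every hypergraph $H$ of rank $2$.

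I expect no genuine obstacle in this argument. The only two points requiring care are the reduction that handles $1$-edges, so that the corollary really covers all of rank $2$ rather than merely the $2$-uniform case, and the equivalence between ``winning as last player'' and having outcome $\A$; both are immediate consequences of Proposition \ref{prop:1-edge} and Theorem \ref{theo:last}. The subgraph-detection step is routine precisely because only finitely many forbidden graphs, all of bounded order, are involved, so polynomiality never becomes delicate.
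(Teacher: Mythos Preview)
Your proposal is correct and follows essentially the same approach as the paper: the paper's proof is a single sentence invoking Theorem \ref{theo:graphs} and the fact that fixed-subgraph detection is polynomial, with the rank-2-to-graph reduction via Proposition \ref{prop:1-edge} having been stated separately at the start of Section \ref{sec:rank2}. You simply spell out that reduction in more detail than the paper does, which is fine.
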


\noindent To prove Theorem \ref{theo:graphs}, we discuss depending on who is the last player. For both cases, we give two equivalent characterisations of the winner: one that describes the connected components (extensional) and one by forbidden subgraphs (intensional). Let us start with the case where Avoider is the last player, which is straightforward.

\begin{proposition}\label{prop:graphs_A}
	Let $G$ be a graph. The following three assertions are equivalent:
	\begin{enumerate}[noitemsep,nolistsep,label={\textup{(\arabic*)}}]
		\item Avoider wins on $G$ as last player.
		\item Every connected component of $G$ is $P_1$ or $P_2$.
		\item $G$ does not contain $P_3$ as a subgraph.
	\end{enumerate}
\end{proposition}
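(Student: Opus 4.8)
The plan is to prove the three-way equivalence by a cycle of implications $(2) \Rightarrow (1)$, $(1) \Rightarrow (3)$ (via its contrapositive), and $(3) \Rightarrow (2)$, using the general machinery already established.

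\textbf{Proof sketch.} First I would establish $(3) \Rightarrow (2)$, which is purely structural and elementary: if a connected graph $G$ has a vertex of degree at least $2$, then it contains a vertex $v$ with two distinct neighbours $u, w$, and $uvw$ is a copy of $P_3$; hence if $G$ contains no $P_3$ subgraph, every vertex has degree at most $1$, so every connected component has at most one edge and is therefore $P_1$ or $P_2$. Next, for $(2) \Rightarrow (1)$, I would invoke the disjoint-union machinery: by Proposition \ref{prop:graphs} (or directly), $o(P_1) = o(P_2) = \A$, and by Theorem \ref{theo:union} the outcome $\A$ is neutral (and absorbing among $\A$'s) for disjoint unions, so a graph all of whose components are $P_1$ or $P_2$ has outcome $\A$; in particular Avoider wins as last player. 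Alternatively one can note that the edges of such a $G$ directly form a pairing and apply Proposition \ref{prop:pairing} (Pairing Principle). Finally, for $(1) \Rightarrow (3)$ I would prove the contrapositive: if $G$ contains $P_3$ as a subgraph, then by the Monotonicity Principle (Proposition \ref{prop:monotonicity}) it suffices to observe that Enforcer wins on $P_3$ as last player — indeed, on $P_3 = xyz$, whoever plays last is forced, since after the first two moves two vertices of a single edge may already be occupied by Avoider, and in any case Enforcer (playing second-to-last and hence able to dictate the final forced vertex) can steer Avoider into filling one of the two edges; more cleanly, $o(P_3) = \SL$ was already recorded in Proposition \ref{prop:graphs}, so Enforcer wins on $P_3$ as last player, and monotonicity lifts this to $G$.

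I expect essentially no obstacle here: the statement is labelled "straightforward" in the text, and all three implications reduce to facts either proved earlier (Proposition \ref{prop:graphs}, Theorem \ref{theo:union}, the Monotonicity and Pairing Principles) or to a one-line graph-theoretic observation about degrees. The only point requiring a moment of care is making sure the direction "$(1) \Rightarrow (3)$" is stated for the \emph{last player} specifically, so that citing $o(P_3) = \SL$ together with Proposition \ref{prop:monotonicity} (which transfers an Enforcer win as last player from a subgraph to the whole graph) is exactly what is needed — there is no parity subtlety because we are comparing "last player on $P_3$" with "last player on $G$" and monotonicity is phrased precisely in those terms.
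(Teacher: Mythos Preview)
Your approach is essentially identical to the paper's: the same cycle $(3)\Rightarrow(2)\Rightarrow(1)\Rightarrow(3)$, with the pairing argument for $(2)\Rightarrow(1)$ and Monotonicity plus $o(P_3)=\SL$ for $(1)\Rightarrow(3)$.

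One terminological slip to fix: in the $(1)\Rightarrow(3)$ step you repeatedly write that ``Enforcer wins on $P_3$ as last player'' and that Monotonicity ``transfers an Enforcer win as last player''. This is not what you need (and is in fact false, since $o(P_3)=\SL$ means the \emph{last} player loses regardless of identity). The contrapositive of (1) is ``Avoider loses on $G$ as last player'', i.e.\ Enforcer wins on $G$ as \emph{second-to-last} player; so you must invoke the second-to-last case of Proposition~\ref{prop:monotonicity}, using that Enforcer wins on $P_3$ as second-to-last player. The paper phrases this simply as ``Avoider loses on $P_3$ as last player'', which avoids the confusion.
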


\begin{proof}
	It is clear that (3)$\implies$(2). Since Avoider loses on $P_3$ as last player by Proposition \ref{prop:graphs}, we also have (1)$\implies$(3) by Proposition \ref{prop:monotonicity} (Monotonicity Principle). Finally, if each connected component of $G$ is $P_1$ or $P_2$, then the edges of $G$ form a pairing of $G$ hence $o(G)=\A$ by Proposition \ref{prop:pairing} (Pairing Principle), so (2)$\implies$(1).
\end{proof}

\noindent The case where Enforcer is the last player is not difficult either, though slightly more involved. The key is the following lemma, which is purely a graph theory result.

\begin{lemma}\label{lem:graphs}
	The connected graphs that do not contain $2P_3$, $C_4$ or the 3-sunlet as a subgraph are exactly the following:
	\begin{enumerate}[noitemsep,nolistsep,label={\textup{\arabic*.}}]
		\item $P_1$;
		\item $P_2$;
		\item $C_5$;
		\item the bull;
		\item pseudo-stars.
	\end{enumerate}
\end{lemma}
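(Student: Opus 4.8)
The plan is to prove the two directions separately. The easy direction is that each of the five listed graph types indeed avoids $2P_3$, $C_4$ and the 3-sunlet as a subgraph: $P_1$ and $P_2$ have too few vertices/edges to contain any of them; $C_5$ has girth $5$ so cannot contain $C_4$ or a triangle (ruling out the 3-sunlet, which contains $C_3$) and has only $5$ vertices so cannot contain $2P_3$ (which needs $6$); the bull and a general pseudo-star each contain a cut vertex $y$ all of whose deletion-components are $P_1$ or $P_2$, and I would check directly that such a graph cannot contain $2P_3$ (any copy of $P_3$ must use $y$, so two vertex-disjoint copies are impossible), cannot contain $C_4$ (a $4$-cycle, having no cut vertex of its own, would have to lie inside one component of $G^{-y}$ together with $y$, but those components plus $y$ span at most $3$ vertices), and cannot contain the 3-sunlet (same reasoning — a 3-sunlet has no cut vertex whose removal leaves only $P_1$'s and $P_2$'s on the non-$y$ side; more concretely the triangle of a 3-sunlet would have to live in $G^{-y}+y$, forcing two triangle vertices into a single size-$\leq 2$ component of $G^{-y}$, impossible). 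For the bull one just notes it \emph{is} a pseudo-star (remove the apex of the triangle that has the two pendant edges... actually remove one of the degree-$2$ triangle vertices), so it suffices to handle pseudo-stars, but since the statement lists the bull separately I would simply remark it is a special pseudo-star or verify it by hand.

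The substantive direction is: if $G$ is connected and contains none of $2P_3$, $C_4$, the 3-sunlet, then $G$ is one of the five types. I would argue as follows. If $G$ has no $P_3$ at all, then being connected it is $P_1$ or $P_2$, and we are done; so assume $G$ contains a $P_3$, say on vertices $a b c$ with $b$ its center. Since $G$ has no $2P_3$, every $P_3$ of $G$ meets every other $P_3$; in particular I would aim to show there is a single vertex $y$ hit by \emph{all} copies of $P_3$ in $G$ (a "$P_3$-transversal of size $1$"). Granting such a $y$, every connected component of $G^{-y}$ is $P_3$-free, hence is a disjoint union of $P_1$'s and $P_2$'s — which, if $G$ is connected of order $\geq 3$, is exactly the definition of a pseudo-star, \emph{unless} $G^{-y}$ is a single $P_1$ or $P_2$ (then $G$ has at most $3$ vertices — handled — or we stay within pseudo-stars since $P_3$, $P_4$, $P_5$ and $C_3$ are pseudo-stars). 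The one genuinely exceptional case is when no such universal vertex $y$ exists: I expect this to force $G = C_5$. The reason is a Helly-type phenomenon for $P_3$'s in a $2P_3$-free graph combined with the absence of $C_4$ and the 3-sunlet; the classical way to see it is that a $2P_3$-free connected graph is either a "triangle-star" or a "near-path", and the cycle lengths are severely constrained, with $C_4$ forbidden and $C_6 \supseteq 2P_3$, so the only surviving cycle that is not killed is $C_5$, and any chord or pendant attachment to $C_5$ creates a $2P_3$ or a 3-sunlet or a $C_4$.

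Concretely, to nail down "universal $y$ or $C_5$", I would do a short case analysis. Fix a $P_3$ on $abc$. Every edge $e=uv$ disjoint from $\{a,b,c\}$ would, together with an edge from $\{a,b,c\}$ to $\{u,v\}$ (which exists if $G$ is connected and we take $e$ adjacent to the $abc$-component appropriately), or just on its own if $uv$ extends to a $P_3$ avoiding $abc$, produce a forbidden $2P_3$; so the complement of $N[\{a,b,c\}]$ is edgeless and in fact every vertex outside $\{a,b,c\}$ is pendant or attaches in a very restricted way. Then I would examine which of $a,b,c$ the outside vertices attach to, using that attaching two pendants to $a$ and to $c$ (with $b$ also possibly carrying structure) builds toward the 3-sunlet or a $2P_3$, and that creating a $4$-cycle is outright forbidden. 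This pins the structure down to a center vertex with $P_1$/$P_2$ components hanging off it (pseudo-star, including the bull), or a $5$-cycle with nothing attached ($C_5$). The main obstacle I anticipate is precisely this last case-chase: being careful that, when $G$ contains a triangle, the triangle together with pendants does not secretly form a 3-sunlet (three pendants on the three triangle vertices) or, with a path of length $\geq 2$ hanging off, a $2P_3$; and when $G$ is triangle-free, ruling out all cycles except $C_5$ and showing no chords or extra branches survive. Everything else is routine once that dichotomy is established.
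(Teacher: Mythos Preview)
Your plan has a concrete error that breaks the hard direction: the bull is \emph{not} a pseudo-star. With the bull written as the triangle $xyz$ together with pendants $xu$ and $yv$, deleting any single vertex leaves a component of size at least $3$: deleting $x$ (or symmetrically $y$) leaves the path $z\text{--}y\text{--}v$ (resp.\ $z\text{--}x\text{--}u$) of size $3$; deleting $z$ leaves the path $u\text{--}x\text{--}y\text{--}v$ of size $4$; deleting $u$ or $v$ leaves a connected graph on $4$ vertices. So no choice of center witnesses the pseudo-star definition. Equivalently, there is no single vertex meeting all copies of $P_3$ in the bull: the $P_3$ on $\{u,x,z\}$ misses $y$ and $v$, while the $P_3$ on $\{v,y,z\}$ misses $x$ and $u$, and the $P_3$ on $\{u,x,y\}$ misses $z$.

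This means your proposed dichotomy ``either there is a vertex $y$ hitting every $P_3$ (hence $G$ is a pseudo-star), or $G=C_5$'' is false as stated: the bull is a third exceptional outcome of the ``no universal $P_3$-transversal vertex'' branch. Your case analysis sketch at the end (``pseudo-star, including the bull'') inherits this mistake. The fix is not hard---you can keep the Helly-type idea but must allow the no-transversal branch to produce \emph{either} $C_5$ \emph{or} the bull, and then argue that any further edge attached to the bull creates one of the forbidden subgraphs---but as written the plan does not account for it.

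For comparison, the paper sidesteps this by a different decomposition: it splits on whether $G$ is a path, a cycle, or has a vertex $x$ of degree at least $3$. In the third case it asks whether $G^{-x}$ contains a $P_3$; if not, $x$ itself is the pseudo-star center, and if so, a short count on how many of three chosen neighbours of $x$ lie on that $P_3$ forces the bull, after which a direct check shows the bull admits no further incident edge. This decomposition isolates the bull naturally rather than trying to absorb it into the pseudo-star family.
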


\begin{proof}
	Let $G$ be a connected graph that does not contain $2P_3$, $C_4$ or the 3-sunlet as a subgraph. We distinguish between three cases:
	\begin{itemize}
		\item Firstly, suppose $G$ is a path $P_n$. Since $G$ contains no $2P_3$, we have $n \leq 5$. If $n=1$ (resp. $n=2$), then we are in Case 1 (resp. Case 2). If $n \in \{3,4,5\}$, then we are in Case 5.
		\item Secondly, suppose $G$ is a cycle $C_n$. Since $G$ contains no $2P_3$, we have $n \leq 5$. Moreover, since $G$ contains no $C_4$, we have $n \neq 4$. If $n=3$, then we are in Case 5. If $n=5$, then we are in Case 3.
		\item Lastly, suppose $G$ is neither a path nor a cycle. Since $G$ is connected, this means $G$ contains a vertex $x$ of degree at least 3. Let $y,z,t$ be three distinct neighbours of $x$. If $G^{-x}$ contains no $P_3$, then we are in Case 5. Therefore, we now assume that $G^{-x}$ contains a path $P$ on three vertices. Define $m = |\{y,z,t\} \cap V(P)|$.
			\begin{itemize}[noitemsep,nolistsep]
				\item[--] It is impossible that $m=3$. Indeed, if we had $P=yzt$ for instance, then $xyzt$ would be a $C_4$ in $G$.
				\item[--] It is impossible that $m \leq 1$. Indeed, if we had $y,z \not\in V(P)$ for instance, then $yxz \cup P$ would be a $2P_3$ in $G$.
				\item[--] In conclusion, we have $m=2$. Without loss of generality, assume that $V(P)=\{u,y,z\}$ for some $u \not\in\{x,y,z,t\}$. It is impossible that $P=yuz$, because $xyuz$ would be a $C_4$ in $G$. Therefore, we have $P=uyz$ or $P=yzu$: by symmetry, assume $P=uyz$. We get a bull $B$ in $G$ formed by the $C_3$ $xyz$ and the pendant edges $xt$ and $yu$. Finally, it is impossible that $G$ contains a sixth edge intersecting $V(B)$, because a $2P_3$, $C_4$ or 3-sunlet would appear in all cases: see Figure \ref{fig:cases_bull}. Since $G$ is connected, this means $G=B$, so we are in Case 4.

					\begin{figure}[h]
						\centering
						\includegraphics[scale=.5]{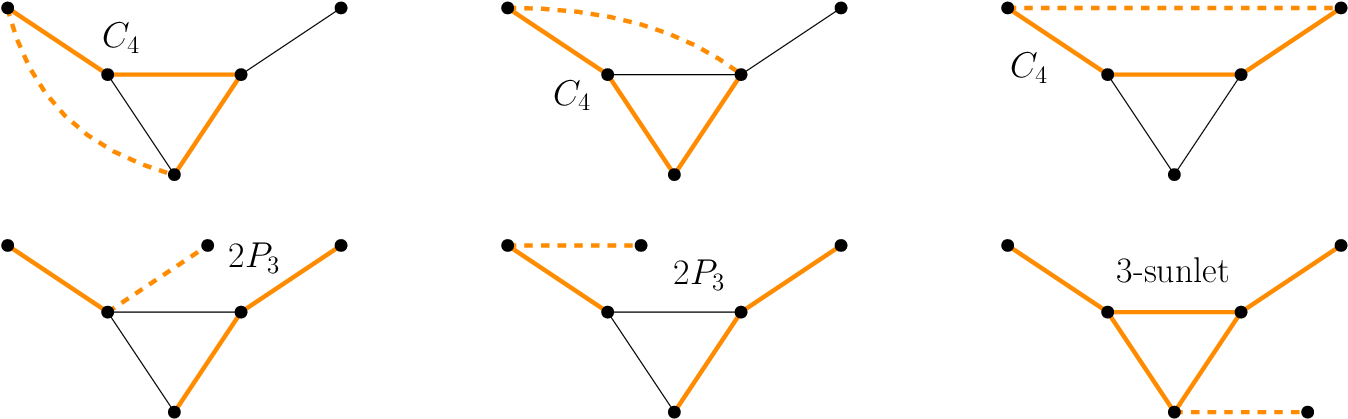}
						\caption{All possibilities of adding an edge (dashed line) connected to a bull, up to symmetries. The forbidden subgraph thus created is highlighted.}\label{fig:cases_bull}
					\end{figure}
					
			\end{itemize}
	\end{itemize}
	Conversely, it is clear that $P_1$, $P_2$, $C_5$, the bull and pseudo-stars are connected graphs which do not contain $2P_3$, $C_4$ or the 3-sunlet as a subgraph. This ends the proof.
\end{proof}

\begin{proposition}\label{prop:graphs_E}
	Let $G$ be a graph. The following three assertions are equivalent:
	\begin{enumerate}[noitemsep,nolistsep,label={\textup{(\arabic*)}}]
		\item Avoider wins on $G$ as second-to-last player.
		\item Every connected component of $G$ is $P_1$ or $P_2$, except for at most one which is either: $C_5$, the bull or a pseudo-star.
		\item $G$ does not contain $2P_3$, $C_4$ or the 3-sunlet as a subgraph.
	\end{enumerate}
\end{proposition}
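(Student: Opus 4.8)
The plan is to establish the cycle of implications $(3)\implies(2)\implies(1)\implies(3)$, leaning heavily on the earlier results. The implication $(3)\implies(2)$ is the purely combinatorial part and follows directly from Lemma \ref{lem:graphs}: if $G$ has no $2P_3$, $C_4$ or 3-sunlet subgraph, then in particular no connected component has such a subgraph, so by Lemma \ref{lem:graphs} each component is $P_1$, $P_2$, $C_5$, the bull, or a pseudo-star. It remains to rule out having two components of ``non-trivial'' type: any component among $\{C_5,\text{bull},\text{pseudo-star}\}$ contains $P_3$ (noted in the excerpt for $C_5$ and the bull, and by definition for pseudo-stars since $|V(G)|\ge 3$ forces a vertex of degree $\ge 1$ with a neighbour, and connectivity plus size $\ge 3$ yields a $P_3$), so two of them would give a $2P_3$ subgraph, contradiction. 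Hence at most one component is of non-trivial type, which is exactly (2).

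For $(2)\implies(1)$: assume every component of $G$ is $P_1$ or $P_2$ except possibly one component $K \in \{C_5,\text{bull},\text{pseudo-star}\}$. The union of the $P_1$/$P_2$ components, call it $G_0$, admits a pairing (its edges), so $o(G_0)=\A$ by Proposition \ref{prop:pairing}. If there is no exceptional component, then $o(G)=o(G_0)=\A$ and in particular Avoider wins as second-to-last player. Otherwise $o(K)=\SL$ by Proposition \ref{prop:graphs}, and then $o(G)=o(G_0 \cup K)=\SL$ by Theorem \ref{theo:union} (the entry $\A,\SL \mapsto \SL$), so again Avoider wins as second-to-last player, i.e.\ (1) holds.

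For $(1)\implies(3)$: this is the contrapositive of monotonicity. If $G$ contains $2P_3$, $C_4$ or the 3-sunlet as a subgraph, then since each of these graphs has outcome $\E$ by Proposition \ref{prop:graphs}, Enforcer wins on that subgraph as second-to-last player, hence Enforcer wins on $G$ as second-to-last player by Proposition \ref{prop:monotonicity} (Monotonicity Principle). Therefore Avoider does not win on $G$ as second-to-last player, establishing $\neg(3)\implies\neg(1)$.

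The steps are all short given the machinery already in place; the only point requiring a little care is the observation inside $(3)\implies(2)$ that each of $C_5$, the bull, and every pseudo-star contains $P_3$, so that Lemma \ref{lem:graphs} applied componentwise cannot leave two non-trivial components. I expect no genuine obstacle here — the real content was front-loaded into Lemma \ref{lem:graphs} and Proposition \ref{prop:graphs}, so this proposition is essentially an assembly of those pieces together with Theorem \ref{theo:union} and the Monotonicity Principle.
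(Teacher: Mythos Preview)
Your proof is correct and follows essentially the same approach as the paper: the same cycle of implications $(3)\Rightarrow(2)\Rightarrow(1)\Rightarrow(3)$, using Lemma~\ref{lem:graphs} componentwise together with the $2P_3$ observation for $(3)\Rightarrow(2)$, the outcomes from Proposition~\ref{prop:graphs} combined with Theorem~\ref{theo:union} for $(2)\Rightarrow(1)$, and the Monotonicity Principle for $(1)\Rightarrow(3)$.
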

 
\begin{proof}
	Since Avoider loses on $2P_3$, $C_4$ and the 3-sunlet as second-to-last player by Proposition \ref{prop:graphs}, we have (1)$\implies$(3) by Proposition \ref{prop:monotonicity} (Monotonicity Principle). If $G$ does not contain $2P_3$, $C_4$ or the 3-sunlet as a subgraph, then each connected component of $G$ is a $P_1$, $P_2$, $C_5$, bull or pseudo-star by Lemma \ref{lem:graphs}, moreover at most one is a $C_5$, bull or pseudo-star otherwise we would get a $2P_3$, so (3)$\implies$(2). Finally, recall that $o(P_1)=o(P_2)=\A$ and $o(C_5)=o(\text{bull})=o(\text{pseudo-star})=\SL$ by Proposition \ref{prop:graphs}. Therefore, using Theorem \ref{theo:union}, a disjoint union of copies of $P_1$ and $P_2$ has outcome $\A$, and adding one component which is a $C_5$, bull or pseudo-star makes the outcome $\SL$. Since $\A$ and $\SL$ are the two outcomes where Avoider wins as second-to-last player, we get (2)$\implies$(1).
\end{proof}
 
\begin{proof}[Proof of Theorem \ref{theo:graphs} and Corollary \ref{coro:graphs}]
    Theorem \ref{theo:graphs} clearly follows from Propositions \ref{prop:graphs_A} and \ref{prop:graphs_E}. Since testing whether a graph contains a fixed given subgraph can be done in polynomial time, Corollary \ref{coro:graphs} ensues.
\end{proof}

\section{Linear hypergraphs of rank 3}\label{section_rank3}

\noindent After settling hypergraphs of rank 2, a natural first step is to address linear hypergraphs of rank 3, similarly to what was done for the Maker-Breaker convention \cite{kutz}. In this section, we solve the game on linear hypergraphs of rank 3 when Avoider is the last player.

\subsection{Leaf-edges and reduced hypergraphs}

\noindent The most basic linear hypergraphs are linear trees. Let us start by observing some of their properties.

\begin{proposition}\label{prop:leafedge}
    Let $T$ be a linear tree.
    \begin{itemize}[noitemsep,nolistsep]
    \item If $T$ has no leaf-edge, then $T$ is an isolated vertex.
    \item If $T$ has exactly one leaf-edge, then $T$ has a single edge.
    \item If $T$ has exactly two leaf-edges $e$ and $e'$, then $T$ forms a linear simple walk $(e,\ldots,e')$.
   % \item Otherwise, $T$ has at least three leaf-edges.
    \end{itemize}
\end{proposition}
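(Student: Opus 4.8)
The plan is to proceed by induction on the number of edges of the linear tree $T$, using the recursive definition directly. Throughout, recall that a leaf-edge is an edge containing at most one vertex of degree greater than $1$. The base case is an isolated vertex, which has no edge and hence no leaf-edge, settling the first bullet for that instance. For the inductive step, write $T = T' \cup \{f\}$ where $T'$ is a linear tree and $f$ is an edge sharing exactly one vertex, call it $v$, with $T'$ (so $f$ contributes one or two new vertices of degree $1$, plus the vertex $v$ whose degree in $T$ is its degree in $T'$ plus one).

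First I would handle the structural bookkeeping. In $T$, the edge $f$ has exactly one vertex ($v$) that can possibly have degree $>1$, so $f$ is always a leaf-edge of $T$. Moreover, adding $f$ only raises the degree of $v$; it leaves the degree of every other vertex unchanged. Consequently an edge $e \in E(T')$ is a leaf-edge of $T$ if and only if either it was a leaf-edge of $T'$ and does not become ``spoiled'' by $v$ gaining degree, or $v$ is precisely the unique high-degree vertex that $e$ is allowed. The cleanest way to say this: $e \in E(T')$ fails to be a leaf-edge of $T$ exactly when $e$ contains two vertices of degree $\geq 2$ in $T$, which happens iff $e$ already had two such vertices in $T'$, or $e$ had exactly one high-degree vertex in $T'$ and also contains $v$ (where $v$ had degree $1$ in $T'$). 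With this in hand, the three bullets follow by tracking how the leaf-edge count changes.

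For the first bullet (no leaf-edge $\Rightarrow$ isolated vertex): if $T$ has an edge, then by the recursive construction it has at least one edge added last, which as noted above is automatically a leaf-edge — contradiction; so $T$ has no edges, i.e. it is an isolated vertex (by linearity a hyperedge-free connected linear tree is a single vertex). For the second bullet: suppose $T$ has a single edge $g$; I claim $g$ is the only leaf-edge, and conversely if $T$ has $\geq 2$ edges it has $\geq 2$ leaf-edges. The forward direction is trivial. For the converse, induct: in $T = T' \cup \{f\}$ with $T'$ having $\geq 1$ edge, $f$ is a leaf-edge of $T$; and among the leaf-edges of $T'$ (there is at least one, by the first bullet, since $T'$ is not an isolated vertex), at least one survives as a leaf-edge of $T$ — the only way a $T'$-leaf-edge $e$ gets spoiled is by containing $v$ and already having one high-degree vertex, and if every $T'$-leaf-edge contained $v$ one checks using linearity that $T'$ had only one edge through each... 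I would instead argue more robustly: a linear tree with $k \geq 1$ edges has at least $\min(k,2)$ leaf-edges, proved by noting that the ``edge graph'' (incidence structure) is itself tree-like, so there are at least two ``extremal'' edges. The third bullet is the delicate one and is where I expect the main obstacle: I must show that having exactly two leaf-edges $e, e'$ forces $T$ to be a simple linear walk from $e$ to $e'$. The idea is that the leaf-edges correspond to the ``ends'' of the structure, and exactly two ends in a tree-like object means it is a path; formally, I would argue that if any edge had two neighbours on the ``same side'' (i.e. if the branching structure were nontrivial, or if two edges met in a vertex of degree $\geq 3$), pruning would reveal a third leaf-edge, contradicting the count. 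Making ``pruning reveals a third leaf-edge'' precise — e.g. by repeatedly removing a leaf-edge and invoking the inductive hypothesis, or by a direct extremal argument on a longest simple walk — is the technical heart of the argument; linearity is used to ensure consecutive edges of the walk meet in exactly one vertex and non-consecutive edges are disjoint, which is exactly the definition of a simple walk.
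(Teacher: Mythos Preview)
Your approach (induction on the number of edges, tracking which leaf-edges survive when a new edge $f$ is attached at $v$) is exactly the paper's approach, and your bookkeeping is correct. But you are not extracting the one clean consequence of that bookkeeping, and as a result the third bullet stays vague.

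From your own case analysis: adding $f$ always creates the new leaf-edge $f$, and at most one old leaf-edge can be ``spoiled'' (this happens only when $v$ had degree $1$ in $T'$, i.e.\ $v$ lies in a unique edge of $T'$, and that edge already had one other high-degree vertex). Hence the number of leaf-edges is \emph{non-decreasing} along the recursive construction. This single observation is what the paper uses, and it dissolves the third bullet immediately: if $T=T_{m+1}$ has exactly two leaf-edges and $T_2$ has exactly two, then $T_m$ also has exactly two; by induction $T_m$ is a linear simple walk $(e_r,\ldots,e_s)$. Since $f=e_{m+1}$ is a leaf-edge of $T$ and $T$ has only two, one of $e_r,e_s$ is no longer a leaf-edge; by your spoiling criterion this forces $v$ to be a degree-$1$ endpoint of that end-edge, so $(e_r,\ldots,e_s,e_{m+1})$ is again a linear simple walk. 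No ``pruning'' argument, no longest-walk extremal argument, and no appeal to an auxiliary ``edge graph'' is needed.

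So the only real gap is that your plan for the third bullet gestures at harder alternatives instead of cashing in the monotonicity that your earlier paragraph already proved.
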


\begin{proof}
    %In the iterative process that defines trees, the last added edge is clearly a leaf-edge
    The first two assertions are straightforward. Let us prove the third one by induction on the number of edges. Clearly, if a linear tree has exactly two edges, then both are leaf-edges and they form a linear simple walk. Now, let $m \geq 2$ such that the property holds, and let $T$ be a linear tree with exactly two leaf-edges, obtained from an isolated vertex by adding edges $e_1,\ldots,e_{m+1}$ successively. Denote by $T_i$ the linear tree with edges $e_1,\ldots,e_i$. Since $T_2$ and $T_{m+1}=T$ have exactly two leaf-edges, and the number of leaf-edges never decreases during the process of adding edges, $T_m$ must also have exactly two leaf-edges $e_r$ and $e_s$ for some $1 \leq r,s \leq m$. Now, by the induction hypothesis, $T_m$ forms a linear simple walk $(e_r\ldots,e_s)$. As $e_{m+1}$ is a leaf-edge of $T_{m+1}$, one of $e_r$ and $e_s$ is not a leaf-edge of $T_{m+1}$. By symmetry, assume $e_s$ is not a leaf-edge of $T_{m+1}$: we have $e_{m+1} \cap e_s =\{x\}$ where $x$ is of degree 1 in $T_m$. Since $|e_{m+1} \cap V(T_m)|=1$, this implies that $e_s$ is the only edge of $T_m$ that intersects $e_{m+1}$, so $(e_r\ldots,e_s,e_{m+1})$ is a linear simple walk.
\end{proof}

\noindent Note that two vertices of degree 1 in the same leaf-edge of a hypergraph are indistinguishable, so Proposition \ref{prop:reduced} allows us to remove these two vertices and that leaf-edge without altering the outcome. Therefore, we introduce the following property, which we will freely assume throughout our study of the game on linear hypergraphs of rank 3.

\begin{definition}
	A hypergraph of rank 3 is {\em reduced} if it contains no leaf-edge of size 3.  % and no edge that is a superset of another edge.
	The {\em reduced hypergraph} of a hypergraph $H$ is obtained from $H$ as follows: remove every leaf-edge $e \in E(H)$ of size 3 as well as two vertices of degree 1 of $e$, then repeat the process on the remaining hypergraph until there is no leaf-edge anymore. % and edges that are supersets of other edges. 
	%\florian{plusieurs manières de le faire a priori, soit parce qu'on fait un ordre différent, soit parce qu'on ne choisit pas de garder le même sommet pour les 3-edges isolées}
	%\aline{L'ordre est pas hyper important non ? ca reste isomorphe à la fin ? il faut peut etre dire qu'on enleve que deux sommets quand c'est une 3 edge isolée}
    %no {\em leaf} i.e. no 3-edge with at least two vertices of degree 1, and no edge that is a superset of another edge.
\end{definition}

\noindent Note that the definition of a linear tree yields the following property.

\begin{observation}\label{obs:hypreduced}
Let $H$ be a 3-uniform hypergraph. The reduced hypergraph of $H$ is an isolated vertex if and only if $H$ is a linear tree.
%\florian{j'ai enlevé l'hypothèse "non-empty connected" qui me paraît inutile}\aline{ok}
\end{observation}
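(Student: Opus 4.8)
The plan is to prove both directions by induction on the number of edges, using the recursive definition of a linear tree. The key observation is that the reduction process (removing a 3-edge leaf-edge together with two of its degree-1 vertices) is, in the 3-uniform setting, exactly the inverse of the operation ``add an edge sharing exactly one vertex with the current hypergraph'' — up to the presence of isolated vertices, which Corollary \ref{coro:isolated} tells us are irrelevant but which we must still track here since we are reasoning about hypergraph structure, not just outcomes.

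First I would prove the ``if'' direction: if $H$ is a 3-uniform linear tree, then its reduced hypergraph is an isolated vertex. I would induct on $|E(H)|$. The base case is $H$ an isolated vertex, which is already reduced and is indeed an isolated vertex (vacuously, there are no leaf-edges). For the inductive step, write $H$ as obtained from a linear tree $T$ by adding an edge $e$ with $|e \cap V(T)| = 1$; since $H$ is 3-uniform, the two vertices of $e \setminus V(T)$ have degree 1 in $H$, so $e$ is a leaf-edge of size 3. Removing $e$ and those two vertices yields exactly $T$ (the single shared vertex remains, possibly now isolated if $T$ was a single vertex). By Proposition \ref{prop:leafedge}-style reasoning or directly by induction on $T$, the reduced hypergraph of $T$ is an isolated vertex; hence so is that of $H$. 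One subtlety: the reduction process as defined repeats ``until there is no leaf-edge anymore'', so I should note the process is confluent — removing leaf-edges in any order yields the same result — or simply observe that after removing $e$ we are left with the linear tree $T$ and apply the inductive hypothesis to whatever leaf-edge the process picks next.

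For the ``only if'' direction: if the reduced hypergraph of $H$ (a 3-uniform hypergraph) is an isolated vertex, then $H$ is a linear tree. I would run the reduction in reverse: let $H = H_0, H_1, \ldots, H_k$ be the sequence of hypergraphs produced by the reduction, with $H_k$ an isolated vertex. Each $H_{i+1}$ is obtained from $H_i$ by deleting a size-3 leaf-edge and two of its degree-1 vertices, so conversely $H_i$ is obtained from $H_{i+1}$ by adding an edge $e$ with $|e \cap V(H_{i+1})| = 1$ — that is, the ``add a pendant edge'' operation. Starting from the isolated vertex $H_k$, which is a linear tree by definition, and applying these operations in reverse order $H_k, H_{k-1}, \ldots, H_0$, each step preserves the property of being a linear tree. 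Hence $H = H_0$ is a linear tree. Here I must be slightly careful that the vertex re-added at each step is genuinely new and that the single intersection vertex lies in $V(H_{i+1})$ — both follow from the definition of the reduction step, since the two removed vertices had degree 1 in $H_i$ (so they belong to no other edge) while the third vertex of the removed leaf-edge may have had higher degree and thus survives in $H_{i+1}$, unless it too became isolated, in which case it is still in $V(H_{i+1})$.

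The main obstacle, such as it is, is bookkeeping rather than depth: ensuring that the reduction is well-defined independently of the order of leaf-edge removals (confluence), and handling the boundary case where the last reduction step leaves a single vertex that is the ``shared vertex'' of the final edge — so that an edge of $H$ may consist of two fresh vertices plus this one, and we must confirm it still counts as a pendant-edge addition to the isolated vertex. Both points are routine once spelled out, so the proof should be short.
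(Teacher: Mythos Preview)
Your plan is correct and considerably more detailed than what the paper offers: the paper states this as an observation with no proof beyond the sentence ``Note that the definition of a linear tree yields the following property.'' Your two inductions (forward along a construction sequence for the ``if'' direction, and reversing the reduction sequence for the ``only if'' direction) are exactly the natural way to unpack that sentence, and the bookkeeping points you flag (confluence of leaf-edge removal, and the boundary case where the surviving vertex of the last removed edge is the final isolated vertex) are the only places where care is needed.

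One small comment: in the ``if'' direction you write ``Removing $e$ and those two vertices yields exactly $T$'' and then invoke the inductive hypothesis on $T$, but as you note yourself, the reduction process need not remove $e$ first. The cleanest fix is not confluence per se but the stronger (and equally easy) fact that removing \emph{any} size-3 leaf-edge together with two of its degree-1 vertices from a 3-uniform linear tree yields a 3-uniform linear tree with one fewer edge; this follows by the same kind of leaf-reordering argument used for ordinary trees, and once you have it, a straightforward induction on $|E(H)|$ goes through regardless of which leaf-edge the process picks. Your reference to Corollary~\ref{coro:isolated} is indeed irrelevant here, as you correctly point out.
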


%\florian{preuve ?}\aline{trivial ? c'est la definition d'un arbre ?}

\subsection{Some elementary linear hypergraphs of rank 3}

\begin{definition}
	We define the following linear hypergraphs of rank 3:
	\begin{itemize}[noitemsep,nolistsep]
		\item For distinct vertices $a$ and $b$, an {\em $ab$-chain} is a hypergraph $P$ which forms a linear simple walk $(e_1,\ldots,e_{\ell})$ in which $|e_i|=3$ for all $1 \leq i \leq \ell$ and $e_1$ (resp. $e_{\ell}$) is the only edge containing $a$ (resp. $b$). Such a walk is said to {\em represent} $P$. We say $\ell=|E(P)|\geq 1$ is the {\em length} of $P$. A {\em chain} is an $ab$-chain for some vertices $a$ and $b$.
		\item A {\em nunchaku} \cite{MBrank3} is a hypergraph $N$ formed by the edges of a linear simple walk $(e_1,\ldots,e_{\ell})$ in which $|e_i|=3$ for all $2 \leq i \leq \ell-1$ and $|e_1|=|e_{\ell}|=2$. We say $\ell=|E(N)|\geq 2$ is the {\em length} of $N$.
		\item A {\em cycle} is a hypergraph $C$ obtained from an $ab$-chain $P$ of length at least 2 by adding a 3-edge $\{a,b,c\}$ where $c \not\in V(P)$. We say $\ell=|E(C)|\geq 3$ is the {\em length} of $C$.
		\item The {\em prism} is the hypergraph $H$ obtained from a cycle $C$ of length 3 by adding a 3-edge containing the vertices that are of degree 1 in $C$.
	\end{itemize}
	See Figure \ref{fig:examples_rank3} for some illustrations.
\end{definition}

\begin{figure}[h]
	\centering
	\includegraphics[scale=.5]{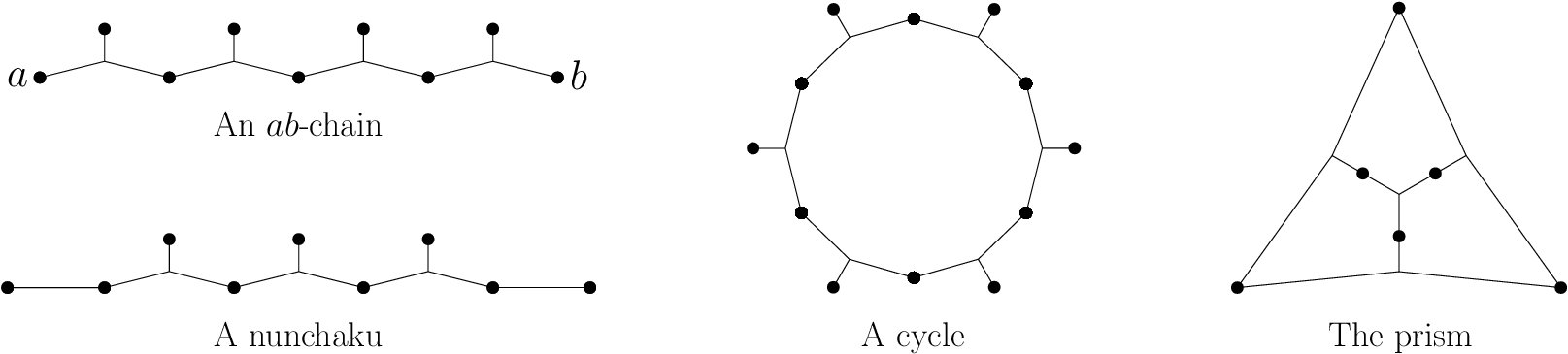}
	\caption{Some elementary linear hypergraphs of rank 3.}\label{fig:examples_rank3}
\end{figure}

\noindent Given a connected 3-uniform linear hypergraph $H$ and two distinct vertices $a,b \in V(H)$, we denote by $\dist_H(a,b)$ the length of a shortest $ab$-chain in $H$. The following lemma ensures in particular that $\dist_H$ is a metric on $V(H)$.

\begin{lemma}\label{lem:chains}
	Let $H$ be a linear hypergraph, and let $a,b,c \in V(H)$ be pairwise distinct. Let $P$ be an $ab$-chain and let $P'$ be a $bc$-chain. Then $P \cup P'$ contains an $ac$-chain.
\end{lemma}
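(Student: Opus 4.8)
The goal is to extract an $ac$-chain from the union $P \cup P'$ of an $ab$-chain and a $bc$-chain. The plan is to argue by induction on $|E(P)| + |E(P')|$, building a linear simple walk from $a$ to $c$ by concatenating a suitable prefix of $P$ with a suitable suffix of $P'$. First I would fix walks $(e_1,\dots,e_\ell)$ representing $P$ and $(f_1,\dots,f_m)$ representing $P'$, so that $e_1$ is the unique edge of $P$ containing $a$, $e_\ell$ the unique one containing $b$, $f_1$ the unique edge of $P'$ containing $b$, and $f_m$ the unique one containing $c$.

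The base case is when the two chains are ``essentially disjoint'': if no edge of $P$ other than possibly $e_\ell$ meets any edge of $P'$, and symmetrically, then I claim $(e_1,\dots,e_\ell,f_1,\dots,f_m)$ — or this sequence with one of $e_\ell,f_1$ deleted if they happen to coincide — is already a linear simple walk from $a$ to $c$; here I use linearity of $H$ to control the pairwise intersections (two distinct edges meet in at most one vertex) and the definition of chain to see that $a \notin e_i$ for $i \geq 2$ and $c \notin f_j$ for $j \leq m-1$, so that the endpoints behave correctly. For the inductive step, suppose instead that some edge $e_i$ of $P$ (say with $i$ minimal) coincides with or meets some edge $f_j$ of $P'$ in a way that creates a ``shortcut'': then I would replace $P$ by its prefix $(e_1,\dots,e_i)$ — which is an $ab'$-chain for an appropriate vertex $b' \in e_i \cap (e_i \cap f_j \text{ or } f_j)$ — and $P'$ by a suffix $(f_j,\dots,f_m)$ or $(f_{j+1},\dots,f_m)$, which is a $b'c$-chain (or $b''c$-chain) of strictly smaller total length, then apply the induction hypothesis and check that the resulting $ac$-chain still lives inside $P \cup P'$. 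Care is needed to make sure the prefix of $P$ really is a chain in the technical sense, i.e. that $e_i$ is the unique edge of the prefix containing the new endpoint; this follows because the prefix is itself a sub-walk of a linear simple walk.

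The main obstacle I anticipate is bookkeeping the interaction between the two walks cleanly: an edge of $P$ could meet an edge of $P'$ without the two being equal, and the shared vertex need not be an endpoint of either chain, so I must verify that truncating at the ``first contact'' genuinely yields shorter chains whose concatenation is a \emph{simple} walk (no two edges at distance $\geq 2$ in the new walk share a vertex). The key facts that make this go through are linearity (any two edges share at most one vertex, so contacts are single vertices and easy to track) and the simplicity of the original walks (so that within the prefix of $P$, and within the suffix of $P'$, the simple-walk condition is inherited for free); the only new pairs to check are those with one edge from each truncated piece, and minimality of the truncation point handles these. I would organise the case analysis by whether the ``first contact'' edge $e_i$ equals $f_j$ or merely intersects it, and in the latter case by whether the shared vertex lies in $e_{i}\cap e_{i+1}$ or $e_{i-1}\cap e_i$ versus $f_{j-1}\cap f_j$ or $f_j \cap f_{j+1}$; in each case choosing the right prefix/suffix split (keeping or dropping the boundary edge) to land on a legitimate chain.
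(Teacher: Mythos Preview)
Your plan is sound and rests on the same central idea as the paper---splice the two walks at their first point of contact---but the paper carries it out directly, with no induction and no case analysis. After fixing representing walks $(e_1,\ldots,e_\ell)$ for $P$ and $(e'_1,\ldots,e'_m)$ for $P'$, the paper first disposes of the cases $c \in V(P)$ and $a \in V(P')$ (a suitable prefix of $P$, respectively suffix of $P'$, is already an $ac$-chain). In the remaining case it takes $i$ minimal with $e_i \cap V(P') \neq \varnothing$ and then $j$ \emph{maximal} with $e_i \cap e'_j \neq \varnothing$, and verifies in one stroke that $(e_1,\ldots,e_i,e'_j,\ldots,e'_m)$ is the desired $ac$-chain: minimality of $i$ forces $e_1,\ldots,e_{i-1}$ to miss $V(P')$ entirely, maximality of $j$ forces $e_i$ to miss $e'_{j+1},\ldots,e'_m$, and the standing assumptions $a \notin V(P')$, $c \notin V(P)$ pin $a$ to $e_1$ and $c$ to $e'_m$. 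This single min/max choice makes your anticipated split---on whether $e_i$ equals $f_j$ and on which side of $e_i$ or $f_j$ the shared vertex lies---unnecessary, and the argument terminates after one splice rather than recursing. Your base case also quietly needs $a \notin V(P')$ and $c \notin V(P)$: you note $a \notin e_i$ for $i\geq 2$ but not $a \notin f_j$, and indeed if $\ell=1$ with $a$ lying on $f_1\cap f_2$ the concatenation fails to be an $ac$-chain. Handling those two special cases up front, as the paper does, is precisely what lets the main step be this clean.
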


\begin{proof}
	Let $(e_1,\ldots,e_{\ell})$ and $(e'_1,\ldots,e'_m)$ be linear simple walks representing $P$ and $P'$ respectively, with $a \in e_1$,  $b \in e_{\ell} \cap e'_1$ and $c \in e'_m$. If $c \in V(P)$, then we can define $1 \leq i \leq \ell$ as the minimum index such that $c \in e_i$, and the sequence $(e_1,\ldots,e_i)$ clearly is a linear simple walk which represents an $ac$-chain. A similar argument concludes if $a \in V(P')$. Therefore, we may assume that $a \not\in V(P')$ and $c \not\in V(P)$. Since $b \in V(P')$, we can define $1 \leq i \leq \ell$ as the minimum index such that $e_i \cap V(P') \neq \varnothing$, and $1 \leq j \leq m$ as the maximum index such that $e_i \cap e'_j \neq \varnothing$. The definitions of $i$ and $j$ ensure that the sequence $(e_1,\ldots,e_i,e'_j,\ldots,e'_m)$ is a simple walk, moreover it is linear since $H$ is. Finally, since $a \not\in V(P')$ and $c \not\in V(P)$, the only edge of this walk containing $a$ is $e_1$ and the only edge containing $c$ is $e'_m$, therefore it represents an $ac$-chain.
\end{proof}

%\begin{notation}
%	Given a connected 3-uniform linear hypergraph $H$ and two distinct vertices $a,b \in V(H)$, we denote by $\dist_H(a,b)$ the length of a shortest $ab$-chain in $H$. Lemma \ref{lem:chains} ensures that $\dist_H$ is a metric on $V(H)$.
%\end{notation}

\noindent We will see that nunchakus, prisms and cycles play a major role for the Avoider-Enforcer game on hypergraphs of rank 3. We now determine their outcome.

\begin{proposition}\label{prop:outcome_rank3}
Nunchakus have outcome $\SL$. The prism and cycles have outcome $\A$.
%We have $o(\text{nunchaku})=\SL$ and $o(\text{cycle})=o(\text{prism})=\A$.
\end{proposition}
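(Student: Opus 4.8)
The plan is to treat the three families separately and, in each, to reduce to the Pairing Principle (Proposition~\ref{prop:pairing}): directly for cycles, and only after the opponent's opening move for the prism and for nunchakus. For a cycle $C$ of length $\ell$, write it as an $ab$-chain $(f_1,\dots,f_{\ell-1})$ together with the extra edge $g=\{a,b,c\}$; setting $f_i\cap f_{i+1}=\{t_i\}$, $f_1=\{a,s_1,t_1\}$, $f_i=\{t_{i-1},s_i,t_i\}$ for $2\le i\le\ell-2$ and $f_{\ell-1}=\{t_{\ell-2},s_{\ell-1},b\}$, one gets $|V(C)|=2\ell$. I would then exhibit the pairing $\{\{a,c\}\}\cup\{\{s_i,t_i\}:1\le i\le\ell-2\}\cup\{\{s_{\ell-1},b\}\}$, whose parts are pairwise disjoint (and in fact exhaust $V(C)$) and which covers every edge since $\{a,c\}\subseteq g$, $\{s_i,t_i\}\subseteq f_i$ for $i\le\ell-2$, and $\{s_{\ell-1},b\}\subseteq f_{\ell-1}$. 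Hence $o(C)=\A$ by Proposition~\ref{prop:pairing}.

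The prism $R$ is $3$-uniform and $2$-regular on $6$ vertices, and any two of its four edges meet in exactly one vertex; consequently $R$ has no pairing (three disjoint pairs, each lying in at most one edge by linearity, cannot meet four edges), so the direct argument fails. Since $R$ is even, it suffices to show Avoider wins as the last (i.e.\ second) player. I would have Avoider answer Enforcer's opening move $z$ by treating the rest as a fresh game on the alive hypergraph $R^{-z}$: it has $5$ vertices and exactly the two edges $e,e'$ of $R$ avoiding $z$, which meet in a single vertex $w\ne z$, so $\{e\setminus\{w\},\,e'\setminus\{w\}\}$ is a pairing of $R^{-z}$ and $o(R^{-z})=\A$. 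As $R^{-z}$ is odd, Avoider is its last player, wins that continuation, and thereby avoids every alive edge; the edges through $z$ are dead and harmless. Therefore $o(R)=\A$.

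Let $N=N_\ell$ be a nunchaku of length $\ell\ge2$, with edges $e_1=\{u_0,v_1\}$, $e_i=\{v_{i-1},w_i,v_i\}$ for $2\le i\le\ell-1$ and $e_\ell=\{v_{\ell-1},u_\ell\}$, so $|V(N)|=2\ell-1$ is odd. I would establish $o(N)\ne\E$ and $o(N)\ne\A$, which forces $o(N)=\SL$. For $o(N)\ne\E$ I claim Avoider wins as the second (hence second-to-last) player; it is enough that $N^{-z}$ admits a pairing for every $z\in V(N)$, since then $o(N^{-z})=\A$ and Avoider wins the continuation after Enforcer's opening move $z$. Deleting $z$ kills the edge(s) through $z$ and splits $N$ into two (possibly empty) sub-chains; in each sub-chain I pair every surviving $3$-edge $\{v_{j-1},w_j,v_j\}$ with $\{w_j,v_j\}$ or with $\{v_{j-1},w_j\}$ — the choice being forced by keeping the pairs disjoint as one moves away from the cut — and I pair a surviving end $2$-edge $e_1$ or $e_\ell$ with $\{u_0,v_1\}$ or $\{v_{\ell-1},u_\ell\}$; a short case check over the position of $z$ ($z\in\{u_0,u_\ell\}$, $z=v_i$, $z=w_i$) confirms this is always a valid pairing. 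For $o(N)\ne\A$ I would induct on $\ell$, with base case $N_2=P_3$ and $o(P_3)=\SL$ by Proposition~\ref{prop:graphs}. Since $N$ is odd, its last player is its first player, and Avoider playing first wins iff for some opening move $x$ she wins the resulting Enforcer-first game on the even hypergraph $N^{+x}$, i.e.\ iff $o(N^{+x})=\A$ for some $x\in V(N)$; so it suffices to show $o(N^{+x})\ne\A$ for every $x$. If $x\in\{u_0,u_\ell,v_1,v_{\ell-1}\}$, then shrinking the end $2$-edge through $x$ leaves a $1$-edge in $N^{+x}$, so $o(N^{+x})\ne\A$ by Proposition~\ref{prop:1-edge}. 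Otherwise $x=w_i$ with $2\le i\le\ell-1$, or $x=v_i$ with $2\le i\le\ell-2$; then $e_i$ shrinks to a $2$-edge $e_i\setminus\{x\}$ and $(e_1,\dots,e_{i-1},e_i\setminus\{x\})$ is a sub-nunchaku of $N^{+x}$ of length $i<\ell$, whose outcome is $\SL$ by the induction hypothesis, so by the Monotonicity Principle (Proposition~\ref{prop:monotonicity}) Enforcer wins on $N^{+x}$ as second-to-last player and $o(N^{+x})\ne\A$. This completes the induction, and $o(N)=\SL$.

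The cycle and prism cases are straightforward once the right substructure is spotted. The bulk of the work, and the main obstacle, lies in the nunchaku case: verifying that $N^{-z}$ always admits a pairing requires a somewhat tedious case analysis with index bookkeeping, and throughout one must keep careful track of parities and of the ``last player'' conventions when passing between $N$ and its contractions $N^{+x}$ and $N^{-z}$ (these are exactly what make the reductions to shorter nunchakus and to the Pairing Principle go through).
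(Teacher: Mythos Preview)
Your proof is correct and follows essentially the same architecture as the paper's: a direct pairing for cycles, and an induction on length for nunchakus showing that any Avoider opening creates a shorter nunchaku as a subhypergraph, combined with a pairing after one deletion to rule out outcome~$\E$. Two minor differences are worth noting. First, for the $o(N)\neq\E$ direction you verify that $N^{-z}$ admits a pairing for \emph{every} $z$, whereas the paper invokes Proposition~\ref{prop:lastmove}, which only requires a single well-chosen $y$ (an endpoint of $N$); your route works but is more laborious than necessary. Second, for the prism you observe that any two edges of $R$ meet, so $R^{-z}$ always has exactly two intersecting edges and hence a pairing; the paper instead prescribes Avoider's reply explicitly (a neighbour of Enforcer's pick) and analyses the remaining two rounds directly. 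Both arguments are short, and yours fits more uniformly with the pairing theme. One small presentational point: in your induction you say the shorter nunchaku ``has outcome $\SL$ by the induction hypothesis'', but the hypothesis as you stated it is only $o\neq\A$; this suffices for the monotonicity step (Enforcer wins as second-to-last player on the sub-nunchaku, hence on $N^{+x}$), so the argument is fine, but the phrasing should be tightened.
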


\begin{proof}
	We first prove the result on nunchakus when Avoider is the last player (note that Avoider is also the first player since nunchakus are odd). We proceed by induction on length. A nunchaku of length 2 is simply a $P_3$ graph, on which we already know Avoider loses as last player. Now, let $N$ be a nunchaku of length $\ell \geq 3$, and assume that Avoider loses as last player on all nunchakus of length at most $\ell-1$. Let $a$ and $b$ be the vertices of degree 1 in the 2-edges of $N$. If Avoider's first pick is in one of the 2-edges, then she loses since this creates a 1-edge. Otherwise, Avoider's first pick $x$ creates two new nunchakus $N_1$ and $N_2$, both shorter than $N$, one of which does not contain $a$ and the other does not contain $b$ (see Figure \ref{fig:nunchaku_induction}). Enforcer can respond by picking $y \in \{a,b\}$, so that $N_i$ is a subhypergraph of the updated hypergraph $N^{+x-y}$ for some $i \in \{1,2\}$. The induction hypothesis ensures that Enforcer wins on $N_i$ as second-to-last player, so Enforcer wins on $N^{+x-y}$ as second-to-last player by Proposition \ref{prop:monotonicity} (Monotonicity Principle), therefore Enforcer wins on $N$ as second-to-last player.
	
	\begin{figure}[h]
		\centering
		\includegraphics[width=\textwidth]{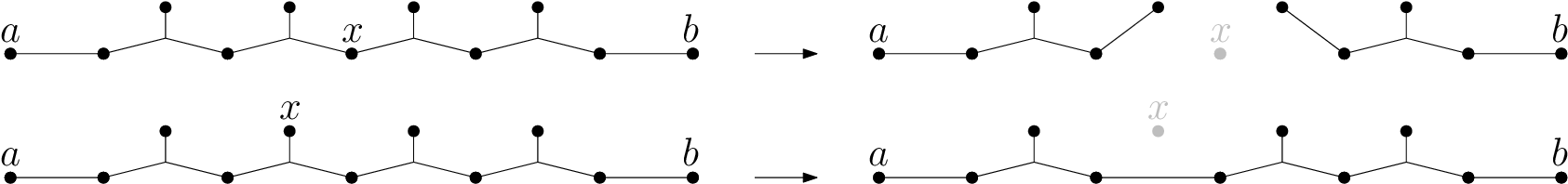}
		\caption{Top: if $x$ is of degree 2 in $N$ (left) then $N^{+x}$ contains two disjoint nunchakus (right). Bottom: if $x$ is of degree 1 in $N$ (left) then $N^{+x}$ contains two nunchakus sharing a 2-edge (right).}\label{fig:nunchaku_induction}
	\end{figure}

	\noindent To show that Avoider wins on any nunchaku $N$ as second-to-last player, it suffices to show that there exists $y\in V(N)$ such that Avoider wins on $N^{-y}$ as last player, according to Proposition \ref{prop:lastmove}. Let $y$ be a vertex of degree 1 in a 2-edge of $N$: since $N^{-y}$ admits a pairing, as illustrated in Figure \ref{fig:nunchaku_pairing}, we know $o(N^{-y})=\A$ by Proposition \ref{prop:pairing} (Pairing Principle).
	
	\begin{figure}[h]
		\centering
		\includegraphics[width=\textwidth]{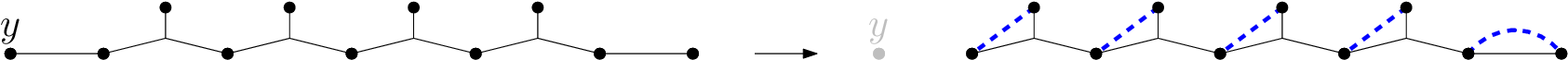}
		\caption{$N$ is on the left, $N^{-y}$ is on the right with a pairing being represented.}\label{fig:nunchaku_pairing}
	\end{figure}
	
	\noindent It is clear that a cycle admits a pairing, where each vertex of degree 1 is paired with a neighbour of degree 2, so Avoider likewise wins on any cycle as last player. Finally, let $H$ be a prism, and suppose Avoider is the last player: since $H$ is even, this means Enforcer is the first player. Enforcer picks some vertex $y$, and Avoider picks some $x \in N(y)$ (the set of neighbours of $y$). Since $x$ is of degree 2 in $H$ (as is any vertex in a prism), this choice ensures that the updated hypergraph $H^{-y+x}$ contains only one 2-edge $e$. There are only two rounds of play left, so Alice simply has to pick some $z \not\in e$ in the next round to win.
\end{proof}

\subsection{Structural characterisation of the outcome when Avoider is the last player}

\noindent We are going to establish the following result.

\begin{theorem}\label{theo:3unif_A}
	Let $H$ be a reduced linear hypergraph of rank 3. The following two assertions are equivalent:
	\begin{enumerate}[noitemsep,nolistsep,label={\textup{(\arabic*)}}]
		\item Avoider wins on $H$ as last player.
		\item Each connected component of $H$ is a cycle, a prism, an isolated 2-edge or an isolated vertex.
	\end{enumerate}
\end{theorem}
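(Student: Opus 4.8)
The plan is to prove both implications, leaning heavily on the machinery already developed. The direction $(2)\implies(1)$ is essentially immediate: by Theorem \ref{theo:union}, it suffices to check each type of component separately, and Proposition \ref{prop:outcome_rank3} tells us that cycles and prisms have outcome $\A$, while isolated $2$-edges and isolated vertices trivially have outcome $\A$. Since $\A$ is neutral for disjoint union, $o(H)=\A$, so in particular Avoider wins as last player.

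The substance is in $(1)\implies(2)$, for which I would argue the contrapositive: assuming some connected component $H_0$ of $H$ is \emph{not} a cycle, a prism, an isolated $2$-edge, or an isolated vertex, I want to exhibit a subhypergraph of $H$ on which Enforcer wins as last player, and then conclude by the Monotonicity Principle (Proposition \ref{prop:monotonicity}) together with Theorem \ref{theo:union} (a single component with outcome $\E$ forces $o(H)=\E$). First, if $H_0$ contains a $1$-edge, Avoider loses as last player by Proposition \ref{prop:1-edge}, so we may assume $H$ has rank exactly $2$ or $3$ with no $1$-edges; since $H$ is reduced, $H_0$ has no leaf-$3$-edge either. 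If $H_0$ has rank $2$ (i.e. is a graph component), then since it is connected and not $P_1$ or $P_2$ it contains $P_3$, and $o(P_3)=\SL$ gives, via monotonicity, that Enforcer wins as second-to-last; but to get Enforcer winning \emph{as last} I would locate a $2P_3$, $C_4$, or $3$-sunlet — which must exist because a single $P_3$-component together with the rest of $H$ (which is nonempty and not all isolated vertices once we've excluded the trivial global cases) yields one of these, invoking Theorem \ref{theo:graphs}. The genuinely new work is the case where $H_0$ has an edge of size $3$: here I would show that a connected reduced $3$-uniform-ish hypergraph that is neither a single cycle nor the prism must contain, as a subhypergraph, either a nunchaku together with another $P_3$-like structure, or two disjoint nunchakus/chains, or a ``theta''-type configuration — in each case something already known (or easily shown) to have outcome $\E$. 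The cleanest route is probably: pick a vertex $v$ of degree $\geq 2$ in $H_0$ (which exists since there is no leaf-edge and $H_0$ is connected with a $3$-edge and no $1$-edge), and analyze the chains emanating from $v$ using Lemma \ref{lem:chains} to either close up into a unique cycle/prism or produce a branching vertex, from which two disjoint odd sub-configurations of outcome $\SL$ or $\E$ can be extracted.

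The main obstacle is this last structural case analysis: classifying exactly which connected reduced linear rank-$3$ hypergraphs fail to contain an ``Enforcer-winning'' subhypergraph, and verifying that the list is precisely \{cycle, prism\}. This is the rank-$3$ analogue of Lemma \ref{lem:graphs}, and I expect it to require a careful induction on the number of edges (peeling off the removal of a vertex of degree $1$ from some $3$-edge, or contracting a chain), keeping track of how many ``free'' degree-$1$ vertices remain so that parity is controlled and Enforcer's forcing moves (à la the nunchaku induction in Proposition \ref{prop:outcome_rank3}) go through. The delicate point is that, unlike the graph case, a single ``bad'' component need not by itself have outcome $\E$; one sometimes needs to combine it with another component via Theorem \ref{theo:union}, so the argument must track the global outcome, not just $H_0$ in isolation — hence the proof should be organized around showing that if $(2)$ fails then $o(H)\in\{\SL,\E\}$ with the $\SL$ case only possible when $H$ has a very restricted shape that still lets Enforcer win as last player when it is not of the form in $(2)$.
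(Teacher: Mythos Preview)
Your $(2)\Rightarrow(1)$ direction is fine and matches the paper. For $(1)\Rightarrow(2)$, however, there is a genuine gap. First, a terminology slip: to refute ``Avoider wins as last player'' you need Enforcer to win \emph{as second-to-last player}, not as last player; this is what monotonicity from a subhypergraph of outcome $\SL$ or $\E$ would give you. More importantly, the ``find a bad static subhypergraph'' plan cannot work in the $3$-uniform case. In a $3$-uniform linear hypergraph there are no $P_3$'s, no nunchakus, and no $2P_3$/$C_4$/sunlet --- all of those require $2$-edges, which only appear \emph{after Avoider has moved}. Concretely, take two $3$-cycles glued at a single vertex: it is connected, reduced, linear, $3$-uniform, not a cycle or prism, yet every proper subhypergraph reduces to a disjoint union of cycles and linear trees, hence has outcome $\A$. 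So there is no simpler $\SL/\E$ subhypergraph to invoke monotonicity on; the figure-eight is itself minimal. Your ``theta'' and ``two disjoint nunchakus'' candidates do not exist inside such an $H$.

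The paper's proof is accordingly \emph{dynamic}, not structural-via-forbidden-subhypergraphs. It splits on the parity of the bad connected component $H_0$. If $H_0$ is odd (Avoider first), the key lemma is that a connected reduced $3$-uniform linear hypergraph has at least two non-cut vertices (proved by a furthest-vertex argument, Lemma~\ref{lem:NonCut2}); Enforcer's strategy is to answer Avoider's first pick $x$ with a non-cut vertex $y\neq x$, so that Avoider's second pick $z$ is joined to $x$ by a chain avoiding $y$, and \emph{now} a nunchaku exists in the updated hypergraph. If $H_0$ is even (Enforcer first), the key lemma (Lemma~\ref{lem:cycleprism}) says that unless $H_0$ is a cycle or a prism, Enforcer can pick some $y$ so that no component of $H_0^{-y}$ is a linear tree; the odd component of $H_0^{-y}$ then falls to the odd case. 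The $2$-edge cases are dispatched separately (Propositions~\ref{prop:3unif_A_odd} and~\ref{prop:3unif_A_even}). What you are missing is precisely this pair of structural lemmas about non-cut vertices and about the exceptional role of cycles and prisms, together with the realisation that the forcing configurations are created by play rather than present in $H$ from the start.
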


\begin{corollary}\label{coro:3unif_A}
    Deciding whether a linear hypergraph of rank 3 has outcome $\A$ can be done in polynomial time.
\end{corollary}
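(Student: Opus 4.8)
The plan is to package Theorem~\ref{theo:3unif_A} into a polynomial-time decision procedure, so almost all the combinatorial work is already done and the corollary reduces to three routine points. The first is to observe that, for this statement, the outcome value $\A$ is equivalent to the single condition ``Avoider wins as last player''. Indeed, if $o(H)=\A$ then Avoider wins in every case, in particular as last player. Conversely, if Avoider wins on $H$ as last player, then by Theorem~\ref{theo:last} she also wins as second-to-last player, hence she wins in both cases; the only outcome compatible with this is $\A$ (it cannot be $\SL$, in which the last player loses, nor $\E$). Thus deciding $o(H)=\A$ is exactly deciding whether Avoider wins on $H$ as last player, which is the quantity characterised by Theorem~\ref{theo:3unif_A}.

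The second point is the passage to the reduced hypergraph. Given a linear hypergraph $H$ of rank~$3$, I would compute its reduced hypergraph $H'$ by repeatedly deleting a size-$3$ leaf-edge $e$ together with two of its degree-$1$ vertices (a size-$3$ leaf-edge always has at least two such vertices by definition). Any two degree-$1$ vertices lying in a common size-$3$ leaf-edge are indistinguishable, since the indistinguishability conditions are vacuous (neither vertex belongs to any other edge); consequently each deletion is an instance of the $H^{+x-y}$ operation and leaves the outcome unchanged by Proposition~\ref{prop:reduced} (Super Lemma). Each step removes one edge and two vertices, so there are at most $O(|V(H)|)$ iterations, and locating a size-$3$ leaf-edge at each step is polynomial; hence $H'$ is computed in polynomial time, remains a linear hypergraph of rank~$3$, and satisfies $o(H')=o(H)$.

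The third point is to apply Theorem~\ref{theo:3unif_A} to the reduced hypergraph $H'$ and verify that its structural condition is polynomial-time testable. By that theorem, $o(H')=\A$ if and only if every connected component of $H'$ is a cycle, a prism, an isolated $2$-edge, or an isolated vertex. Computing connected components is polynomial, and for each component one tests membership in the four families: an isolated vertex and an isolated $2$-edge are identified immediately by their vertex and edge counts; the prism is a fixed $6$-vertex, $4$-edge hypergraph and is matched directly; and a cycle is recognised by checking that the component is $3$-uniform and exhibits the cyclic incidence pattern of its defining chain-plus-closing-edge, i.e.\ a polynomially checkable degree-and-intersection structure. All these tests are polynomial and there are at most $|V(H')|$ components, so the full decision runs in polynomial time.

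Overall the corollary follows with no genuinely new difficulty, the substance residing entirely in Theorem~\ref{theo:3unif_A}. The two points deserving explicit care are the equivalence between outcome $\A$ and winning as last player (supplied by Theorem~\ref{theo:last}) and the polynomial bound on the reduction to $H'$; the mildest remaining subtlety, which I expect to be the only real bookkeeping, is making the recognition of the cycle family fully explicit once connectivity and $3$-uniformity have been established.
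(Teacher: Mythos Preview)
Your proposal is correct and matches the paper's own argument: reduce to the reduced hypergraph in polynomial time (outcome preserved via Proposition~\ref{prop:reduced}), then invoke Theorem~\ref{theo:3unif_A} and check in polynomial time whether every connected component is a cycle, a prism, an isolated $2$-edge or an isolated vertex. You are slightly more explicit than the paper in justifying the equivalence $o(H)=\A \iff$ Avoider wins as last player (via Theorem~\ref{theo:last}), but this is a welcome clarification rather than a different approach.
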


\noindent First of all, notice that addressing the connected case is sufficient: indeed, by Theorem \ref{theo:union}, Avoider wins as last player if and only if Avoider wins on each connected component as last player. Also note that the result is trivial if there exists a 0-edge or a 1-edge, since Avoider then loses as last player.
\\ Enforcer's main idea is to rely on Avoider creating a nunchaku: since a nunchaku has outcome $\SL$ according to Proposition \ref{prop:outcome_rank3}, this would ensure Enforcer's win as second-to-last player. This happens more easily if Avoider is the first player. Therefore, we distinguish two cases depending on parity.

%\noindent First of all, notice that $H$ can be assumed to be connected: indeed, by Theorem \ref{theo:union}, Avoider wins on $H$ as last player if and only if Avoider wins on each connected component of $H$ as last player. Moreover, we can easily dismiss 1-edges and 2-edges, as we now show. If $H$ contains a 1-edge, then Avoider loses on $H$ as last player, so Theorem \ref{theo:3unif_A} holds in that case. If $H$ contains at least two 2-edges, then connectedness ensures that $H$ contains a nunchaku, which has outcome $\SL$, so Avoider loses as last player

\subsubsection{Odd case: Avoider is first and last}

\noindent We can first address the case where there exists a 2-edge.

\begin{proposition}\label{prop:3unif_A_odd}
	Avoider loses as last player on any odd connected linear hypergraph of rank 3 which contains a 2-edge.
\end{proposition}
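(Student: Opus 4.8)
The plan is to proceed by induction on the number of vertices of $H$, exploiting the fact that $H$ is odd so Avoider makes the first move, and showing that whatever vertex $x$ Avoider picks first, Enforcer can answer with some vertex $y$ so that the updated hypergraph $H^{+x-y}$ contains a nunchaku (or a $1$-edge) as a subhypergraph; then Proposition \ref{prop:monotonicity} together with $o(\text{nunchaku})=\SL$ from Proposition \ref{prop:outcome_rank3} finishes the job, since Enforcer is the second-to-last player on $H^{+x-y}$. The base cases are when $H$ itself is a single $2$-edge (impossible, since that hypergraph is even) or small configurations that are directly nunchakus or contain a $1$-edge. First I would fix the unique $2$-edge (if there were two of them in a connected linear rank-$3$ hypergraph we would already have a nunchaku, so I may as well assume exactly one, or handle the multi-$2$-edge case separately and even more easily). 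Write the $2$-edge as $\{a,b\}$.

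Next I would case on Avoider's first move $x$. If $x$ lies in a $2$-edge, picking $x$ creates a $1$-edge and Avoider loses immediately. So assume $x$ has degree $\geq 1$ and is in no $2$-edge. Removing $x$ (in the sense of ${}^{+x}$) shrinks the edges through $x$ to size $2$, creating one or two new short $2$-edges; since $H$ is connected and contains the $2$-edge $\{a,b\}$, the structure of a linear rank-$3$ hypergraph means that from one of these new $2$-edges there is a path of $3$-edges leading toward $\{a,b\}$ (formally, using $\dist_H$ and Lemma \ref{lem:chains} to extract a chain from a newly created degree-$1$ vertex to $a$ or to $b$). Enforcer then picks a vertex $y$ "on the far side" — concretely a vertex of degree $1$ in one of the other new short edges, or in $\{a,b\}$ — chosen so that killing all edges through $y$ leaves intact a linear simple walk whose two extreme edges have size $2$, i.e. a nunchaku, inside $H^{+x-y}$. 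The key structural observation to make precise is that in a connected linear rank-$3$ hypergraph with at least one $2$-edge, any vertex $x$ sits on a chain joining it to (a vertex of) that $2$-edge, and picking $x$ together with a suitable Enforcer response detaches a nunchaku along that chain.

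The main obstacle I expect is the bookkeeping in that last structural step: when $x$ has degree $2$, removing $x$ splits its two incident $3$-edges into two separate $2$-edges pointing in different directions, and one must argue that at least one of these directions still reaches a $2$-edge of $H$ (or can be terminated by an Enforcer pick) so as to realize a genuine nunchaku of length $\geq 2$ — in particular ruling out the degenerate case where the only thing produced is an isolated $2$-edge with no $3$-edges, which is not a nunchaku. I would handle this by choosing, among Avoider's possible first moves, to analyze the chain from $x$ to $\{a,b\}$: if that chain has length $\geq 1$ in $H$, then after Avoider picks $x$ and Enforcer picks the degree-$1$ vertex on the "wrong" branch of $x$, what remains along the chain from (the shrunk edge near) $x$ to $\{a,b\}$ is a linear simple walk with a $2$-edge at the $\{a,b\}$ end and a $2$-edge at the $x$ end, hence a nunchaku of length $\geq 2$; the length is at least $2$ because the chain uses at least one genuine $3$-edge, $H$ being reduced and connected with $|V(H)|$ large enough. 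The remaining small cases (where no such chain of positive length exists, i.e. $x$ itself is adjacent into $\{a,b\}$) are finite and checked directly: $H$ is then forced to be small, and one verifies Avoider loses by hand or by spotting a $1$-edge or a $P_3$.
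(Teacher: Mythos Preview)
Your core idea---after Avoider's first move $x$, exhibit a nunchaku and invoke Proposition~\ref{prop:monotonicity} with $o(\text{nunchaku})=\SL$---is exactly the paper's argument, and the case split on whether $x$ lies in a $2$-edge is the right one. What you are missing is that the nunchaku already sits inside $H^{+x}$, \emph{before} Enforcer replies. If $x$ lies in no $2$-edge, take a shortest walk $(f_1,\ldots,f_m)$ in $H$ with $x\in f_1$ and $f_m$ a $2$-edge; minimality forces $m\geq 2$, the walk to be simple, $f_1,\ldots,f_{m-1}$ to be $3$-edges, and $x\notin f_i$ for $i\geq 2$. Then $(f_1\setminus\{x\},f_2,\ldots,f_m)$ is a nunchaku in $H^{+x}$, and Proposition~\ref{prop:monotonicity} applied directly to $H^{+x}$ gives that Enforcer wins there as second-to-last player---which is precisely Enforcer's position after Avoider's opening move. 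No Enforcer reply $y$ needs to be specified at all.

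Consequently, all your bookkeeping about degree-$2$ branches, about ``detaching'' a nunchaku via a careful $y$, and about residual small cases is unnecessary; the induction on $|V(H)|$ you announce is never used; and the hypothesis that $H$ be reduced, which you invoke, is not part of the statement. Worse, your concrete suggestion $y\in\{a,b\}$ would kill the $2$-edge $\{a,b\}$ and destroy the very nunchaku you built, so that step as written does not go through. The paper's proof is essentially the two-line version of your argument with the superfluous Enforcer move removed.
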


\begin{proof}
	If Avoider plays her first move inside a 2-edge, then this creates a 1-edge. Otherwise, linearity and connectedness ensure that there exists a chain between the vertex that Avoider picks and a 2-edge, so Avoider's first move creates a nunchaku. In all cases, we get a subhypergraph that has outcome $\SL$, so Enforcer wins as second-to-last player by Proposition \ref{prop:monotonicity} (Monotonicity Principle).
\end{proof}

\noindent In the 3-uniform case, Avoider needs to play twice before a nunchaku can appear. After Avoider plays her first move, Enforcer wants to pick a non-cut vertex of the starting hypergraph, so that Avoider's second pick will necessarily be connected to the first by a chain. For this, the following lemma (which is not game-related in itself) is key.

\begin{lemma}\label{lem:NonCut1}
	Any connected reduced linear 3-uniform hypergraph which is not an isolated vertex has at least two non-cut vertices.
\end{lemma}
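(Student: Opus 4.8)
The plan is to argue by contradiction and exploit the tree-like structure that a linear 3-uniform hypergraph inherits. Suppose $H$ is a connected reduced linear 3-uniform hypergraph that is not an isolated vertex and has at most one non-cut vertex. Since $H$ is not an isolated vertex, it has at least one edge, and since $H$ is reduced it has no leaf-edge of size $3$; as $H$ is $3$-uniform this means every edge has exactly one vertex of degree $1$ or no vertex of degree $1$ — wait, more precisely, since there is no leaf-edge, every edge contains at least two vertices of degree greater than $1$. The first step is to build an auxiliary tree capturing the cut structure: consider a ``block-type'' decomposition of $H$ (a DFS / ear-type decomposition of the underlying structure), or more elementarily, look at a spanning structure.

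Here is a cleaner line I would actually pursue. First I would take a shortest-path / BFS argument. Pick any vertex $r$ and run the metric $\dist_H$ from Lemma~\ref{lem:chains}; let $v$ be a vertex maximising $\dist_H(r,v)$, and let $(e_1,\dots,e_\ell)$ be a shortest $rv$-chain, so $v \in e_\ell$ and $v$ has degree $1$ in $e_\ell$ or is an endpoint of the chain. I claim such an ``extremal'' $v$ is a non-cut vertex: removing $v$ cannot disconnect $H$, because any vertex $w \neq v$ reached through $e_\ell$ would have to lie in $e_\ell$, and one checks using linearity and maximality of $\dist_H(r,v)$ that the remaining two vertices of $e_\ell$ still connect to $r$ without passing through $v$ (here the reducedness hypothesis is used to guarantee $e_\ell$ is not a leaf-edge, so at least two of its vertices have degree $\geq 2$ and hence have alternative routes to $r$). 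This produces one non-cut vertex. To get a \emph{second} non-cut vertex, I would repeat the extremal argument ``from the other end'': among all vertices farthest from the first non-cut vertex $v$, pick a farthest one $v'$; the same reasoning shows $v'$ is a non-cut vertex, and $v' \neq v$ since $H$ has at least one edge (so $\dist$ is not identically $0$) — one must rule out the degenerate possibility that the diameter is achieved only at $v$ itself, which cannot happen once there is an edge.

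The main obstacle, and the step I expect to need the most care, is the local analysis at the extremal edge $e_\ell$: I must show precisely that deleting the chosen extremal vertex $y$ leaves $H^{-y}$ connected. The delicate point is that $y$ could a priori have degree $2$ (lie in two edges), in which case ``extremal'' must be interpreted correctly and one has to verify that both edges at $y$ still have their other vertices connected to the bulk of $H$ without using $y$; this is exactly where linearity (two edges meet in $\leq 1$ vertex) and the reduced hypothesis (no size-$3$ leaf-edge, so every edge has $\geq 2$ vertices of degree $\geq 2$) combine to force a detour. I would organise this as a short claim: \emph{if $y$ maximises $\dist_H(r,\cdot)$ then $y$ is a non-cut vertex}, prove it by taking any $w \in V(H)\setminus\{y\}$ and exhibiting a walk from $w$ to $r$ avoiding $y$ — splitting into the case $w \notin N(y)$ (a shortest $wr$-chain already avoids $y$ by the distance-maximality, else $w$ would be farther than $y$) and the case $w \in N(y)$ (then $w$ shares an edge $e$ with $y$; since $e$ is not a leaf-edge, $e$ contains another vertex $u \neq y$ of degree $\geq 2$, and $u$ lies on a shortest chain to $r$ not through $y$, giving $w \to u \to \cdots \to r$). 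Finally I would remark that the two non-cut vertices produced are distinct because they are chosen as mutually farthest and $H$ has an edge, completing the proof.
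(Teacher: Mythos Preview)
Your overall strategy---pick a root $r$, take a vertex at maximum distance, argue it is a non-cut vertex, then repeat from that vertex to get a second one---is exactly the paper's approach. However, your central claim that \emph{every} vertex maximising $\dist_H(r,\cdot)$ is a non-cut vertex is false, and your sketch for the case $w\in N(y)$ contains a concrete error.

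Take the length-$4$ cycle (in the paper's sense): vertices $a_1,b_1,a_2,b_2,a_3,b_3,a_4,b_4$ with edges $\{a_i,b_i,a_{i+1}\}$ for $i=1,2,3,4$ (indices mod~$4$). This hypergraph is connected, linear, $3$-uniform and reduced. From $r=a_1$ the farthest vertices, all at distance~$2$, are $a_3$, $b_2$ and $b_3$. But $a_3$ \emph{is} a cut vertex: deleting it isolates both $b_2$ and $b_3$. Your argument breaks precisely at the step ``$w\to u\to\cdots\to r$'' for $w\in N(y)$: with $y=a_3$ and $w=b_2$, the only edge through $w$ is $e=\{a_2,b_2,a_3\}$; your candidate $u=a_2$ is indeed connected to $r$ avoiding $y$, but the link ``$w\to u$'' you invoke uses $e$ itself, which contains $y$ and is therefore removed in $H^{-y}$. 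Since $b_2$ has degree~$1$, there is no detour. The reduced hypothesis only guarantees that $e$ has two vertices of degree $\geq 2$; it does not force a $y$-avoiding path from $w$.

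The paper repairs this with one extra condition: among the vertices at maximum distance from $r$, choose one of \emph{minimum degree}. If such a vertex $u$ were a cut vertex, one shows that every vertex $y$ separated from $r$ in $H^{-u}$ is also at maximum distance and satisfies $d(u)\geq 2d(y)-1$; combined with the tie-breaker $d(u)\leq d(y)$ this forces $d(u)=d(y)=1$, so the edge through $u$ and $y$ is a leaf-edge, contradicting reducedness. In the $4$-cycle this rule correctly selects $b_2$ or $b_3$ rather than $a_3$. Your plan is salvageable with exactly this refinement, but as written the key claim is wrong.
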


\noindent Lemma \ref{lem:NonCut1} is an immediate consequence of the following result, which details how to find a non-cut vertex.

\begin{lemma}\label{lem:NonCut2}
	Let $H$ be a connected reduced linear 3-uniform hypergraph which is not an isolated vertex, and let $x$ be an arbitrary vertex of $H$. Out of all vertices that maximise $\dist_H(x,\cdot\,)$, let $u$ be one with minimum degree. Then $u$ is a non-cut vertex of $H$.
\end{lemma}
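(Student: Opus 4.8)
The plan is to argue by contradiction: suppose $u$ is a cut vertex, so that $H^{-u}$ splits into at least two connected components. I would pick any edge $e$ containing $u$; since $H$ is $3$-uniform and linear, $e = \{u, p, q\}$ for some vertices $p, q$. The key structural observation is that $e$ is \emph{not} a leaf-edge (because $H$ is reduced), so at least two of $u, p, q$ have degree $\geq 2$ in $H$; in fact I want to understand how the two "sides" of the cut at $u$ interact with $e$. The first step is to show that the vertices $p$ and $q$ lie in the \emph{same} connected component of $H^{-u}$: indeed $e \setminus \{u\} = \{p,q\}$ is contained in the chain of length $1$ joining $p$ and $q$ once $u$ is removed — more precisely, any edge other than the ones through $u$ that witnesses connectivity does not use $u$, and $p, q$ are directly joined by the "leftover" of $e$. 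Wait — that leftover is the $2$-set $\{p,q\}$ which is not itself an edge. So instead I would observe: if some edge $e' \neq e$ contains $p$ (such an edge exists unless $p$ has degree $1$), then $e'$ does not contain $u$ by linearity (since $e \cap e' \supseteq \{p\}$ already, and a second shared vertex would violate linearity unless $e' = e$), hence $e'$ survives in $H^{-u}$ and attaches $p$ to the rest of its side. The upshot I am aiming for: every edge through $u$ has its other two vertices in a single component of $H^{-u}$, but \emph{different} edges through $u$ may go to different components — that is exactly what "$u$ is a cut vertex" means. So let $A$ be the component of $H^{-u}$ not containing the edge-remnants used by some fixed shortest $xu$-chain's last edge; equivalently, there is a component $A$ of $H^{-u}$ that is "behind" $u$ relative to $x$.

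The heart of the argument is a distance-comparison. Let $e_A = \{u, a_1, a_2\}$ be an edge through $u$ with $a_1, a_2 \in A$. I claim $\dist_H(x, a_i) \geq \dist_H(x, u)$ for $i = 1, 2$: any $x a_i$-chain must reach the component $A$, and since $A$ is separated from $x$'s side by the single vertex $u$, that chain must pass through an edge containing $u$; by linearity and the structure of chains, one can truncate it to obtain an $xu$-chain of length $\leq$ the original, so $\dist_H(x, a_i) \geq \dist_H(x, u)$. By maximality of $\dist_H(x, u)$ over all vertices, equality holds: $\dist_H(x, a_1) = \dist_H(x, a_2) = \dist_H(x, u)$. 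Now I invoke the minimum-degree choice of $u$: among all vertices at maximum distance from $x$, $u$ has minimum degree, so $\deg(a_1) \geq \deg(u)$ and $\deg(a_2) \geq \deg(u)$. Since $u$ is a cut vertex it lies in at least two edges, one going into $A$ ($= e_A$) and at least one going elsewhere, so $\deg(u) \geq 2$. I would then use Lemma \ref{lem:chains} (concatenation of chains) to build, from an $xu$-chain ending at $e_A$ together with... — actually the cleaner route is: consider the subhypergraph $H[A \cup \{u\}]$ induced on $A$ together with $u$; restricted to this, $u$ still has degree $\geq 1$, and I want to find inside $A$ a vertex strictly farther from $x$ than $u$, contradicting maximality. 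This is where the reduced hypothesis bites: because $e_A$ is not a leaf-edge, inside $A$ there is further structure past $\{a_1, a_2\}$ — either $a_1$ or $a_2$ has another edge, pushing into $A$ a vertex $a_3$ with $\dist_H(x, a_3) = \dist_H(x, u) + 1 > \dist_H(x, u)$, contradiction. Careful: $e_A$ could be a leaf-edge within $H$ only if at most one of $u, a_1, a_2$ has degree $\geq 2$; but $u$ has degree $\geq 2$, so if $e_A$ were a leaf-edge then $a_1, a_2$ both have degree $1$ and $A = \{a_1, a_2\}$ with no edges inside — that is a valid configuration, so I must rule it out separately: in that case $\{a_1,a_2\}$ are degree-$1$ vertices of $e_A$ making $e_A$ a leaf-edge of size $3$, contradicting that $H$ is reduced. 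This case analysis, disposing of the "small component" possibility via the reduced hypothesis, is the main obstacle and the place to be most careful.

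Assembling: the only way to avoid the contradiction is that $u$ is not a cut vertex, which is the claim. For Lemma \ref{lem:NonCut1}, I would apply Lemma \ref{lem:NonCut2} twice with two cleverly chosen basepoints: first take any $x$ and get a non-cut vertex $u$ at maximum distance from $x$; then take $x' = u$ (or better, a vertex realizing the diameter) and run the lemma again to get a second non-cut vertex $u' \neq u$ — one checks $u' \neq u$ because $u'$ is at maximum distance from $x'$ which, by a diameter argument, differs from $x'$-adjacent vertices, and in particular $\dist_H(u, u') > 0$. I expect the distance-monotonicity step (truncating an $xa_i$-chain through $u$ to an $xu$-chain without increasing length) to require a short but genuine argument using linearity and the simple-walk structure from Proposition \ref{prop:leafedge} and Lemma \ref{lem:chains}; the reduced-hypergraph case analysis ruling out degenerate two-vertex components is the other delicate point.
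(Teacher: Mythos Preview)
Your overall setup is sound---assume $u$ is a cut vertex, take a component $A$ of $H^{-u}$ not containing $x$, and observe that every vertex of $A$ is at distance exactly $\dist_H(x,u)$ from $x$ (your truncation argument gives $\geq$, maximality gives $\leq$). But the heart of your argument then breaks: you take an edge $e'=\{a_1,a_3,a_4\}$ through $a_1$ other than $e_A$, and assert $\dist_H(x,a_3)=\dist_H(x,u)+1$. This is not justified, and in fact it is \emph{false} under your running hypothesis. The vertex $a_3$ lies in $A$ (since $e'$ avoids $u$ by linearity), so by the very observation you just made, $\dist_H(x,a_3)=\dist_H(x,u)$---which forces $a_3$ to be a direct neighbour of $u$ through some other edge. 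You cannot ``push deeper'' into $A$ to find a strictly farther vertex, because every vertex of $A$ is already a neighbour of $u$. Iterating only walks around inside $A$ at the same distance. (Separately, your preliminary claim that the two non-$u$ vertices of any edge through $u$ lie in the same component of $H^{-u}$ is also false in general, but this is not where the proof really fails.)

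The paper exploits exactly the phenomenon that blocks your approach. Since every vertex of $A$ is a neighbour of $u$, fix any $y\in A$ with its edge $e_y\ni u,y$, and look at $N(y)\setminus e_y$: these are all in $A$, hence all neighbours of $u$, and a short linearity argument shows no two of them can share an edge with $u$. This yields $d(u)\geq |N(y)\setminus e_y|+1 = 2d(y)-1$. Combined with the minimum-degree choice $d(u)\leq d(y)$, this forces $d(u)=d(y)=1$, making $e_y$ a leaf-edge and contradicting ``reduced''. So the minimum-degree hypothesis is used not merely to get $\deg(a_i)\geq 2$, but in a genuine counting inequality; your proposal never really exploits it, which is a sign that the intended contradiction was not going to close.
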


 \begin{proof}

	Suppose for a contradiction that $u$ is a cut vertex of $H$. First notice that all vertices which are not in the same connected component as $x$ in $H^{-u}$ are neighbours of $u$. More specifically:
	
	\begin{claim}\label{claim_NonCut3}
	    Let $y$ be a vertex that is not in the same connected component as $x$ in $H^{-u}$. Then $\dist_H(x,y)=\dist_H(x,u)$, and there exists an edge $e_y \supseteq \{u,y\}$ such that, for all shortest $xy$-chain in $H$, $e_y$ is the only edge of the chain containing $u$.
	\end{claim}
	\begin{proofclaim}[Proof of Claim \ref{claim_NonCut3}]
		Let $y$ be a vertex that is not in the same connected component as $x$ in $H^{-u}$: in other words, all $xy$-chains in $H$ contain $u$.
		Let $P$ be a shortest $xy$-chain in $H$, and let $e_y$ be its edge containing $y$. By definition of $u$, we have $\dist_H(x,y)\leq \dist_H(x,u)$, so $u$ cannot appear in any edge of $P$ other than $e_y$, otherwise we would get an $xu$-chain shorter than $P$ (see Figure \ref{fig:lemma_new}). By linearity of $H$, that edge $e_y$ must belong to all shortest $xy$-chains in $H$.
	\end{proofclaim}
	
 	\begin{figure}[h]
 		\centering
 		\includegraphics[scale=.5]{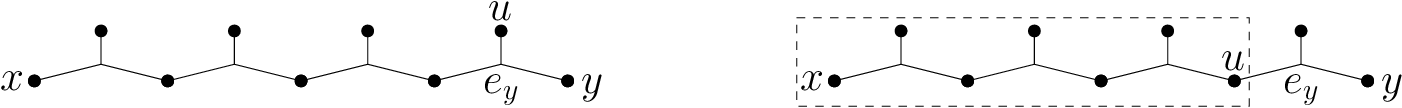}
 		\caption{Left: the actual position of $u$ inside $P$. Right: any other position for $u$ yields an $xu$-chain shorter than $P$.}\label{fig:lemma_new}
 	\end{figure}
	
	Let $y$ be a vertex that is not in the same connected component as $x$ in $H^{-u}$, which exists since $u$ is a cut vertex of $H$. Let $e_y$ be given by Claim~\ref{claim_NonCut3}. Note that, by linearity of $H$, all neighbours of $y$ not in $e_y$ are in the same connected component as $y$ in $H^{-u}$, so we can also apply Claim~\ref{claim_NonCut3} to them.
	
	\begin{claim}\label{claim_NonCut4}
		There cannot exist an edge $\{u,z_1,z_2\}$ where $z_1,z_2 \in N(y) \setminus e_y$.
	\end{claim}
	\begin{proofclaim}[Proof of Claim \ref{claim_NonCut4}]
	
		Suppose for a contradiction that such an edge $e=\{u,z_1,z_2\}$ exists, and let $P$ be a shortest $xz_1$-chain in $H$. We deduce from Claim~\ref{claim_NonCut3} that the only edge of $P$ containing $u$ is the one containing $z_1$. By linearity of $H$, that edge is necessarily $e$, so we get an $xz_2$-chain shorter than $P$ in $H$, contradicting the fact that $\dist_H(x,z_2)=\dist_H(x,u)=\dist_H(x,z_1)$ by Claim~\ref{claim_NonCut3}.
	\end{proofclaim}
	
	\noindent Putting Claims \ref{claim_NonCut3} and \ref{claim_NonCut4} together, we see that all elements of $N(y) \setminus e_y$ are neighbours of $u$, but no two of them can be in the same edge alongside $u$. Therefore:
	$$ d(u) \geq |N(y) \setminus e_{\ell}|+|\{e_{\ell}\}|=|N(y)|-1=2d(y)-1.$$
	However, since $\dist_H(x,y)=\dist_H(x,u)$, we must have $d(u) \leq d(y)$ according to our choice of $u$. The only possibility is that $d(u)=d(y)=1$, which means that $e_y$ is a leaf-edge, a contradiction to the fact that $H$ is reduced.
\end{proof}

\begin{proof}[Proof of Lemma \ref{lem:NonCut1}]
	It suffices to consider any vertex $x$, apply Lemma \ref{lem:NonCut2} on $x$ to get a non-cut vertex $u$, and then apply Lemma \ref{lem:NonCut2} again on $u$ to get a second one.
\end{proof}

\begin{corollary}\label{coro:3unif_A_odd}
	Let $H$ be an odd connected reduced linear 3-uniform hypergraph. Avoider wins on $H$ as last player if and only if $H$ is an isolated vertex. Without the ``reduced'' assumption, the same holds when replacing ``isolated vertex" by ``linear tree''.
\end{corollary}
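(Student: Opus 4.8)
The plan is to settle the reduced statement first and then bootstrap to the general ``linear tree'' version via the Super Lemma. One implication is immediate: an isolated vertex has no edge, so Avoider trivially wins as last player there. For the converse, assume $H$ is odd, connected, reduced, linear and $3$-uniform but not an isolated vertex. Since $H$ is odd, Avoider is at once the first and the last player, so it suffices to exhibit a winning strategy for Enforcer, who is then the second-to-last player.

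Enforcer's strategy goes as follows. By Lemma \ref{lem:NonCut1}, $H$ has at least two non-cut vertices. Avoider opens with some $x_1$; Enforcer answers with a non-cut vertex $u \neq x_1$; Avoider then picks some $x_2$, which is automatically distinct from $x_1$ and $u$. Since $u$ is a non-cut vertex, $H^{-u}$ is connected and still $3$-uniform, hence it contains a shortest $x_1 x_2$-chain $(g_1,\ldots,g_m)$. If $m=1$, then $g_1 \setminus \{x_1,x_2\}$ is a $1$-edge of the updated hypergraph $H^{+x_1+x_2-u}$, so Avoider loses there as last player by Proposition \ref{prop:1-edge}. If $m \geq 2$, then $(g_1 \setminus \{x_1\}, g_2, \ldots, g_{m-1}, g_m \setminus \{x_2\})$ is a nunchaku appearing as a subhypergraph of $H^{+x_1+x_2-u}$; since nunchakus have outcome $\SL$ by Proposition \ref{prop:outcome_rank3}, the Monotonicity Principle (Proposition \ref{prop:monotonicity}) shows Enforcer wins on $H^{+x_1+x_2-u}$ as second-to-last player. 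In both cases the remaining game is played on $H^{+x_1+x_2-u}$, which has $|V(H)|-3$ (an even number of) vertices with Enforcer to move, so Avoider is the last player there and loses it; hence Avoider loses on $H$. This proves that, for reduced $H$, Avoider wins as last player if and only if $H$ is an isolated vertex.

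For the ``linear tree'' version, drop the reduced hypothesis and let $H'$ be the reduced hypergraph of $H$. Each reduction step deletes a size-$3$ leaf-edge together with two of its degree-$1$ vertices, and these two vertices are indistinguishable in the current hypergraph; so by the Super Lemma (Proposition \ref{prop:reduced}) the outcome $o$ is preserved at each step, exactly two vertices are removed (so the parity is kept), and connectedness, linearity and $3$-uniformity persist. Hence $H'$ is an odd, connected, reduced, linear, $3$-uniform hypergraph with $o(H)=o(H')$. Since ``Avoider wins as last player'' means precisely ``$o=\A$'', the reduced case gives: Avoider wins on $H$ as last player $\iff o(H')=\A \iff H'$ is an isolated vertex $\iff H$ is a linear tree, the last equivalence being Observation \ref{obs:hypreduced}.

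The crux is the claim that, whatever $x_2$ Avoider picks, the position $H^{+x_1+x_2-u}$ really does contain a nunchaku or a $1$-edge. This is exactly why Enforcer's reply $u$ must be a non-cut vertex of $H$: it keeps $H^{-u}$ connected, hence guarantees a chain joining $x_1$ to $x_2$, and one then only has to check that deleting $x_1$ and $x_2$ from the two ends of that chain leaves a bona fide nunchaku (or collapses one edge to a $1$-edge).
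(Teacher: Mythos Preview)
Your proof is correct and follows essentially the same approach as the paper: let Avoider play $x_1$, have Enforcer reply with a non-cut vertex (via Lemma~\ref{lem:NonCut1}), then any second Avoider move $x_2$ is joined to $x_1$ by a chain in $H^{-u}$, yielding a nunchaku in the updated hypergraph and a win for Enforcer by Monotonicity. You are in fact slightly more careful than the paper in two places: you separate out the length-$1$ case (where the ``nunchaku'' collapses to a $1$-edge, handled via Proposition~\ref{prop:1-edge}), and you spell out explicitly why passing to the reduced hypergraph preserves the outcome (via the Super Lemma) as well as parity, connectedness, linearity and $3$-uniformity, before invoking Observation~\ref{obs:hypreduced}.
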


\begin{proof}
	Suppose $H$ is not an isolated vertex. Since $H$ is odd, Avoider plays first and picks some vertex $x$. Enforcer then picks a non-cut vertex $y \neq x$, which exists by Lemma \ref{lem:NonCut1}, and then Avoider picks some vertex $z$. Since $H^{-y}$ is connected, there exists an $xz$-chain in $H$ that does not contain $y$. In the updated hypergraph, that chain is now a nunchaku. Since a nunchaku has outcome $\SL$, Enforcer wins as second-to-last player by Proposition \ref{prop:monotonicity} (Monotonicity Principle). As for the last assertion of this corollary, it follows from Observation \ref{obs:hypreduced}.
\end{proof}

%\begin{remark}
%	Without the ``reduced" assumption, one would simply need to replace ``isolated vertex" with ``linear tree" (see Observation \ref{obs:hypreduced}).
%\end{remark}

\subsubsection{Even case: Avoider is second and last}

\noindent Again, we can first address the case where there exists a 2-edge:

\begin{proposition}\label{prop:3unif_A_even}
	Let $H$ be an even connected reduced linear hypergraph of rank 3 which contains a 2-edge. Avoider wins on $H$ as last player if and only if $H$ is an isolated 2-edge.
\end{proposition}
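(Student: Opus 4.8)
The plan is to prove each direction separately. The ``if'' direction is immediate: an isolated 2-edge is a $P_2$, which has outcome $\A$ by Proposition~\ref{prop:graphs}, so in particular Avoider wins on it as last player. The substance is the ``only if'' direction, which I would prove by contraposition: assuming $H$ is an even connected reduced linear hypergraph of rank~$3$ containing a 2-edge, but $H$ is not an isolated 2-edge, I will exhibit a winning strategy for Enforcer as second-to-last player. Since $H$ contains a 2-edge and is connected but has more than that single 2-edge, there is either another edge meeting the 2-edge, or the 2-edge is a leaf attached to the rest of $H$ via a chain. The key structural point, coming from linearity and connectedness, is that after one move by Avoider we can guarantee a nunchaku appears as a subhypergraph of the updated hypergraph, just as in the proof of Proposition~\ref{prop:3unif_A_odd}.

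The main difficulty compared with the odd case is the parity: here Avoider is the \emph{second} player, so Enforcer moves first and must ``commit'' a move before Avoider has revealed anything. The natural strategy is: Enforcer's first move is to pick a suitable vertex $y$ so that, regardless of Avoider's reply $x$, the updated hypergraph $H^{-y+x}$ contains a nunchaku. A good choice is to take $y$ to be a vertex of degree~$1$ inside a 2-edge of $H$ (if the 2-edge $e=\{a,b\}$ has a vertex of degree~$1$, say $b$; if both endpoints have degree~$\geq 2$ one argues slightly differently). After Enforcer picks $y=b$, the edge $e$ becomes a 1-edge $\{a\}$ in $H^{-b}$. Now whatever vertex $x$ Avoider picks, either $x=a$ — but then Avoider has filled the 1-edge and immediately loses — or $x\neq a$, and since $H^{-b}$ is still connected (as $b$ had degree~$1$, removing it cannot disconnect $H$ unless $H$ was just the 2-edge, which is excluded) there is a chain in $H^{-b}$ from $a$ to $x$; in $H^{-b+x}$ this chain, terminating in the 1-edge $\{a\}$ on one side and with the 2-edge $e'\ni x$ truncated on the other, is a nunchaku (or, in the degenerate length-$1$ case, already a 1-edge together with a 2-edge, i.e.\ Avoider is losing immediately). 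A nunchaku has outcome $\SL$ by Proposition~\ref{prop:outcome_rank3}, so by the Monotonicity Principle (Proposition~\ref{prop:monotonicity}) Enforcer wins on $H^{-b+x}$ as second-to-last player, hence on $H$ as second-to-last player.

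I expect the only genuinely delicate point to be handling the case where \emph{every} 2-edge of $H$ has both endpoints of degree $\geq 2$, so that picking a vertex of the 2-edge does not create a 1-edge directly; there one should instead pick a non-cut vertex (using Lemma~\ref{lem:NonCut1}, adapted to the non-uniform setting, or arguing directly since the 2-edge's neighbourhood structure is constrained by linearity) so that Avoider's reply is forced to be joined to the 2-edge by a chain, again producing a nunchaku after the updated 2-edge is taken into account. One must also double-check the short length-$1$ boundary case (the chain from $a$ to $x$ has length~$1$, meaning $x$ lies in an edge already meeting $e$), where the ``nunchaku'' degenerates but Avoider is still demonstrably losing. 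Once these boundary cases are dispatched, the argument is essentially the parity-shifted analogue of Proposition~\ref{prop:3unif_A_odd}, and the reduction to nunchakus via Proposition~\ref{prop:monotonicity} closes it.
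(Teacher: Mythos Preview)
Your argument contains a fundamental confusion between the operators $^{+x}$ and $^{-y}$. When \emph{Enforcer} picks a vertex $b$, the updated hypergraph is $H^{-b}$, whose edge set is $\{e\in E(H)\mid b\notin e\}$: every edge containing $b$ is simply deleted, not truncated. Hence after Enforcer picks $b\in e=\{a,b\}$, the 2-edge $e$ disappears entirely and no 1-edge $\{a\}$ is created. Your subsequent claim that a chain from $a$ to Avoider's reply $x$ yields a nunchaku in $H^{-b+x}$ therefore has no basis: one end of that chain is not a 2-edge (or 1-edge) at all. In fact, picking a vertex of the 2-edge is a priori \emph{helpful} to Avoider, since it kills one of her threats.

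Your overall line can be salvaged, but only by a different argument: if there is a degree-1 vertex $b$ in a 2-edge, then $H^{-b}$ is an odd connected linear 3-uniform hypergraph, and one can check (using that $H$ is reduced) that $H^{-b}$ is not a linear tree, so Corollary~\ref{coro:3unif_A_odd} gives Enforcer the win on $H^{-b}$. This is not the reasoning you wrote, however, and the case where both endpoints of every 2-edge have degree $\geq 2$ still requires work that you only sketch. The paper avoids all of this with a short reduction trick: observe first that $H$ has exactly one 2-edge $e=\{a,b\}$ (two would already give a nunchaku), then add an isolated vertex $v$ and replace $e$ by the 3-edge $\{a,b,v\}$. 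The new hypergraph $H_1$ is odd, connected, linear and 3-uniform, and Avoider still wins on it as last player; so by Corollary~\ref{coro:3unif_A_odd}, $H_1$ is a linear tree, forcing $H$ to be a linear tree and hence (since $H$ is reduced) just the single 2-edge.
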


\begin{proof}
	The ``if" direction is trivial, so let us show the ``only if" direction. Suppose Avoider wins on $H$ as last player. In particular, $H$ contains no 0-edge, no 1-edge, and only one 2-edge $e=\{a,b\}$, otherwise we would get a nunchaku in $H$. The idea is to transform into the odd 3-uniform case which we have already addressed. For this, define the hypergraph $H_0$ obtained from $H$ by adding a new isolated vertex $v$: by Corollary \ref{coro:isolated}, the outcome is unchanged. Then, define the hypergraph $H_1$ obtained from $H_0$ by replacing the 2-edge $e=\{a,b\}$ with the superset 3-edge $e_1=\{a,b,v\}$: Avoider obviously wins on $H_1$ as last player, with the same strategy as on $H_0$. Note that $H_1$ is an odd connected linear 3-uniform hypergraph. Therefore, by Corollary \ref{coro:3unif_A_odd}, $H_1$ is a linear tree, so $H$ is a linear tree as well. Since $H$ is reduced, $e$ is the only edge of $H$, which concludes.
	%Since $H_0$ is reduced, the only possible leaf-edge in $H_1$ is $e_1$: if so, delete its two vertices of degree 1 to get a reduced hypergraph $H_2$ with the same outcome. By Corollary \ref{coro:3unif_A_odd}, $H_2$ is an isolated vertex, meaning $e_1$ is the only edge of $H_1$ i.e. $e$ is the only edge of $H$.
\end{proof}

\noindent In the 3-uniform case, we are once again going to need a structural lemma on hypergraphs which is not game-related.

\begin{lemma}\label{lem:cycleprism}
	Let $H$ be a connected reduced linear 3-uniform hypergraph which is neither an isolated vertex, a cycle nor a prism. Then there exists a vertex $y \in V(H)$ such that no connected component of $H^{-y}$ is a linear tree.
    %\florian{si besoin, on peut remplacer ``not an isolated vertex" par ``$H$ is even" dans les hypothèses, et ``all CCs are not tree" par ``exists odd CC not tree" dans la conclusion, c'est moins fort et c'est tout ce dont on va se servir}
\end{lemma}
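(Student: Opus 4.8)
The goal is: given a connected reduced linear 3-uniform hypergraph $H$ that is not an isolated vertex, a cycle, or a prism, find a vertex $y$ whose removal leaves no component that is a linear tree. By Observation~\ref{obs:hypreduced}, a component of $H^{-y}$ is a linear tree if and only if its reduced hypergraph is an isolated vertex; equivalently, $H^{-y}$ has no linear-tree component iff every connected component of $H^{-y}$ still ``contains a cycle or some essential structure'' after reduction. So the plan is to pick $y$ to be a vertex lying on a cycle (in the sense of the paper's hypergraph cycles), chosen cleverly so that killing the edges through $y$ does not disconnect a tree-like piece off. First I would establish that $H$ must contain a cycle at all: a connected reduced 3-uniform linear hypergraph with no cycle would have to be a linear tree, contradicting reducedness (a nontrivial linear tree always has a leaf-edge of size 3, since all edges have size 3). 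So $H$ has at least one cycle $C$.

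**Key steps.** The strategy I would pursue: let $C$ be a cycle in $H$ of maximum length (or minimum — I'd experiment). Consider the vertices of $C$ of degree $2$ in $H$; call them ``private'' vertices of $C$. If some private vertex $y$ of $C$ works, we are done, so suppose every choice of $y$ fails, meaning for each such $y$ there is a component $T_y$ of $H^{-y}$ that is a linear tree. I would then analyze what such a component can look like: since $y$ has degree $2$ and lies on $C$, removing $y$ deletes exactly two edges of $C$; the remaining two ``arms'' of $C$ from $y$'s neighbours are still present in $H^{-y}$ and live in components that contain chain-structure — hence are not linear trees unless the cycle was very short. This is where the prism and small cycles get excluded as the only exceptions: if $C$ has length $\geq 4$, or length $3$ but $H$ has enough extra edges, the arms keep a component non-tree-like, and the only remaining component that could be a linear tree is some pendant piece hanging off $C$ at a vertex other than $y$. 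But then I would move $y$ to a vertex on that pendant piece closer to its far extremity and iterate, using a maximality/extremality argument (e.g. take $y$ maximising $\dist_H$ from a fixed basepoint, as in Lemma~\ref{lem:NonCut2}) to force a contradiction. The case $H=$ prism and $H=$ cycle are exactly where this breaks — every vertex removal leaves a linear tree (a chain), which is why they must be excluded in the hypothesis.

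**Main obstacle.** The delicate part is the bookkeeping when $C$ is a $3$-cycle: then $H^{-y}$ for $y$ on $C$ leaves only a short leftover of $C$ (possibly a single edge, i.e. a $P_3$-like chain — which \emph{is} a linear tree after reduction), so the naive choice fails and one really needs the extra edges guaranteed by ``$H$ is not the prism.'' I expect to handle this by a separate sub-argument: if every cycle in $H$ has length $3$ and no good $y$ exists, show $H$ must be the prism (or a cycle), leveraging linearity to bound how edges can attach to a $3$-cycle — essentially a finite case check analogous to Figure~\ref{fig:cases_bull} in the rank-2 analysis. The other tricky point is ensuring that when we ``move $y$ outward'' along a pendant tree, the cycle $C$ stays intact in the new $H^{-y}$ so that the arms remain non-tree components; this should follow because the pendant piece is vertex-disjoint from $C$ except at its attachment point, by linearity and the tree structure. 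So the overall shape of the proof is: (i) $H$ has a cycle; (ii) reduce to choosing $y$ on a maximal cycle or on an extremal pendant vertex; (iii) a short-cycle/prism case analysis; (iv) an extremality argument closing the general case.
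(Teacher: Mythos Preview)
Your approach diverges sharply from the paper's, and it contains a genuine gap.

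\textbf{The paper's argument.} The paper does not work with cycles at all. It simply takes $y$ to be a vertex of \emph{minimum degree} in $H$, assumes some component $H'$ of $H^{-y}$ is a linear tree, and derives a contradiction. The point is that every leaf-edge of $H'$ (which exists by Proposition~\ref{prop:leafedge}) is not a leaf-edge of $H$ (since $H$ is reduced), so each such leaf-edge must contain a vertex that was a neighbour of $y$ and had its degree drop when the edges through $y$ were deleted. This tightly bounds how many leaf-edges $H'$ can have in terms of $d(y)$. A short case split on $d(y)=1$ versus $d(y)\ge 2$ (the latter forcing minimum degree $\ge 2$ globally) then pins $H$ down to a cycle or a prism respectively. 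The minimum-degree choice is the whole idea; nothing about picking $y$ on a cycle is used.

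\textbf{The gap in your plan.} Your core step says: remove a degree-$2$ vertex $y$ on a cycle $C$; ``the remaining two arms of $C$ \ldots\ contain chain-structure --- hence are not linear trees unless the cycle was very short.'' This is backwards. Removing a degree-$2$ joint vertex from a cycle deletes two consecutive edges and leaves \emph{one} chain (not two arms), and a chain \emph{is} a linear tree regardless of its length. Worse, the two degree-$1$ ``middle'' vertices of the deleted edges become isolated vertices in $H^{-y}$, which are themselves linear trees. So for $H=C$ your choice always fails, which is fine since that case is excluded; but it also fails in genuinely non-excluded cases. Concretely, let $C$ be a $4$-cycle on joints $a_1,a_2,a_3,a_4$ with middle vertices $b_1,b_2,b_3,b_4$, and let $H$ be $C$ together with the extra edge $\{b_1,b_2,b_3\}$. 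Then $H$ is connected, reduced, linear, $3$-uniform, and neither a cycle nor a prism. Yet for \emph{every} vertex $y$ of degree $2$ in $H$, the hypergraph $H^{-y}$ has a linear-tree component: for $y\in\{a_1,a_4\}$ the vertex $b_4$ is isolated; for $y\in\{a_2,a_3,b_1,b_2,b_3\}$ all of $H^{-y}$ is a single length-$3$ chain. The only vertex that works is the unique degree-$1$ vertex $b_4$ --- precisely the minimum-degree vertex that the paper's argument selects. Your ``move $y$ outward along the pendant piece'' heuristic does not obviously reach $b_4$ here, since the pendant piece in the failing cases is either a bare isolated vertex or absent entirely, and your extremality criterion (maximising $\dist_H$) is left unspecified enough that one cannot check it recovers $b_4$.

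In short, the cycle-based plan does not control the linear-tree components the way you expect, and the fix is exactly the paper's insight: choose $y$ by degree, not by position on a cycle.
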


\begin{proof}
Let $H$ be a connected reduced linear 3-uniform hypergraph  which is neither an isolated vertex, a cycle nor a prism.
Let $y$ be a vertex of minimum degree in $H$. If no connected component of $H^{-y}$ is a linear tree, then we are done. Therefore, we assume from now that there exists a connected component $H'$ of $H^{-y}$ which is a linear tree.

First suppose that $y$ has degree 1, and let $e=\{y,z_1,z_2\}$ be the unique edge containing $y$. It is impossible that $H'$ is an isolated vertex $t$, otherwise $t\in \{z_1,z_2\}$ and $e$ would be a leaf-edge of $H$, contradicting the fact that $H$ is reduced.
Moreover, $H'$ cannot consist of a single edge $e'$, otherwise exactly one vertex of $e'$ would intersect $e$ and $e'$ would be a leaf-edge of $H$, once again contradicting the fact that $H$ is reduced.
Hence, by Proposition \ref{prop:leafedge}, there are at least two leaf-edges in $H'$. Since each leaf-edge of $H'$ is not a leaf-edge of $H$, $e$ must contain at least one vertex of degree 1 of each leaf-edge of $H'$. Since the vertices of degree 1 of the leaf-edges of $H'$ are pairwise distinct, this means $H'$ has exactly two leaf-edges, which contain $z_1$ and $z_2$ respectively as a vertex of degree 1.
Therefore, $H'$ is a $z_1z_2$-chain by Proposition \ref{prop:leafedge}. 
In particular, since $z_1$ and $z_2$ are in the same connected component of $H^{-y}$, we know that $H^{-y}$ is connected, so $H^{-y}=H'$.
All in all, since $H^{-y}$ is a $z_1z_2$-chain, $H$ is a cycle, which is a contradiction.

Now, suppose that $y$ has degree at least 2. This means that there is no vertex of degree 1 in $H$. In particular, $H'$ cannot be an isolated vertex, since that vertex would be of degree 1 in $H$. Furthermore, all the vertices of degree 1 in $H'$ are of degree 2 in $H$ and must be neighbours of $y$. Note that, by Proposition \ref{prop:leafedge}, $H'$ does have vertices of degree~1, so the minimality of $y$ ensures that $y$ is of degree exactly 2. Write the two edges containing $y$ as $\{y,v_1,v_2\}$ and $\{y,w_1,w_2\}$. Since $H$ is linear, two vertices of degree 1 in the same leaf-edge of $H'$ cannot share an edge containing $y$. This means that $H'$ cannot consist of a single edge, as its three vertices would be of degree 1. Therefore, by Proposition \ref{prop:leafedge}, $H'$ has at least two leaf-edges, which amounts to at least four vertices of degree~1. Since those vertices are neighbours of $y$, we conclude that $H'$ has exactly two leaf-edges (so $H'$ is a chain by Proposition \ref{prop:leafedge}) and exactly four vertices of degree 1 which are $v_1$, $v_2$, $w_1$ and $w_2$ (so the chain $H'$ is of length 2). Let $z$ be the fifth vertex of $H'$: up to exchanging the roles of $w_1$ and $w_2$, we can write the two edges of $H'$ as $\{z,v_1,w_1\}$ and $\{z,v_2,w_2\}$. Since all neighbours of $y$ are in the same connected component of $H^{-y}$, we know that $H^{-y}$ is connected, so $H^{-y}=H'$. All in all, we have $V(H)=\{y,z,v_1,v_2,w_1,w_2\}$ and $E(H)=\{\{y,v_1,v_2\},\{y,w_1,w_2\},\{z,v_1,w_1\},\{z,v_2,w_2\}\}$, so $H$ is a prism, which is a contradiction. 
\end{proof}

\begin{corollary}\label{coro:3unif_A_even}
	Let $H$ be an even connected reduced linear 3-uniform hypergraph. Avoider wins on $H$ as last player if and only if $H$ is either a cycle or a prism.
\end{corollary}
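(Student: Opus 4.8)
The plan is to prove both implications. The ``if'' direction is immediate: if $H$ is a cycle or the prism then $o(H)=\A$ by Proposition~\ref{prop:outcome_rank3}, so in particular Avoider wins on $H$ as last player.

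For the ``only if'' direction I would argue the contrapositive, exhibiting an explicit winning strategy for Enforcer as second-to-last player (equivalently, as first player, since $H$ is even) whenever $H$ is neither a cycle nor the prism. First note that $H$ is not an isolated vertex, since it is even, so Lemma~\ref{lem:cycleprism} applies and yields a vertex $y \in V(H)$ such that no connected component of $H^{-y}$ is a linear tree. Enforcer's strategy is to pick $y$ as his first move; the remaining game is then an Avoider--Enforcer game on $H^{-y}$ in which Avoider plays first, hence also last. It therefore suffices to show that Enforcer wins on $H^{-y}$ as second (that is, second-to-last) player, for which it is enough to prove $o(H^{-y})\in\{\SL,\E\}$.

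The heart of the argument is a parity observation. Since $H$ is even, $H^{-y}$ has $|V(H)|-1$ vertices, an odd number, so at least one connected component $K$ of $H^{-y}$ has an odd number of vertices. Let $K'$ be the reduced hypergraph of $K$: it is connected, reduced, linear and 3-uniform; it still has an odd number of vertices, because each reduction step deletes exactly two vertices; and it is not an isolated vertex, because $K$ is not a linear tree (Observation~\ref{obs:hypreduced}). By Corollary~\ref{coro:3unif_A_odd}, Avoider loses on $K'$ as last player, so $o(K')\neq\A$, and $o(K)=o(K')$ by Corollary~\ref{coro:isolated} and Proposition~\ref{prop:reduced}. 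Hence not every connected component of $H^{-y}$ has outcome $\A$, so $o(H^{-y})\neq\A$ by (iterated use of) Theorem~\ref{theo:union}, i.e.\ $o(H^{-y})\in\{\SL,\E\}$. This completes the strategy above, so Avoider does not win on $H$ as last player.

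The main thing to get right — and, I expect, the only genuinely delicate point — is recognising that no induction on $|V(H)|$ is needed and that one never has to understand which \emph{even} hypergraphs have outcome $\A$: the ``no linear-tree component'' conclusion of Lemma~\ref{lem:cycleprism} exists precisely so that Observation~\ref{obs:hypreduced} prevents the odd component $K$ from reducing to an isolated vertex, after which Corollary~\ref{coro:3unif_A_odd} settles everything, while the parity count of $|V(H^{-y})|$ forces the existence of such an odd component. The remaining care is purely book-keeping of who is the last player at each stage: on the even hypergraph $H$ Enforcer plays first (hence second-to-last), and after his move $y$ the odd hypergraph $H^{-y}$ has Avoider as its first and last player, so Enforcer is its second-to-last player, which is exactly what makes $o(H^{-y})\in\{\SL,\E\}$ enough to conclude.
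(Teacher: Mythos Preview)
Your proof is correct and follows essentially the same approach as the paper: pick the vertex $y$ from Lemma~\ref{lem:cycleprism}, observe that $H^{-y}$ is odd and hence has an odd component that is not a linear tree, and apply Corollary~\ref{coro:3unif_A_odd} to that component. The only cosmetic difference is in the final step: the paper concludes via the Monotonicity Principle (the odd component is a subhypergraph of $H^{-y}$, so Enforcer's win there transfers), whereas you pass through the reduced hypergraph $K'$ and invoke Theorem~\ref{theo:union} on the disjoint union of components---both are equally valid, and the paper's route is slightly shorter since it avoids the reduction detour.
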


\begin{proof}
	The ``if" direction is given by Proposition \ref{prop:outcome_rank3}. For the ``only if" direction, suppose $H$ is neither a cycle nor a prism. By Lemma \ref{lem:cycleprism}, there exists $x \in V(H)$ such that no connected component of $H^{-y}$ is a linear tree: Enforcer picks a such $y$. We must show that Enforcer wins as second-to-last player on $H^{-y}$. Since $H^{-y}$ is odd, it has an odd connected component $H'$, and we know $H'$ is not a linear tree. Corollary \ref{coro:3unif_A_odd} thus ensures that Enforcer wins on $H'$ as second-to-last player, so Enforcer wins on $H^{-y}$ as second-to-last player by Proposition \ref{prop:monotonicity} (Monotonicity Principle).
\end{proof}

\begin{proof}[Proof of Theorem \ref{theo:3unif_A} and Corollary \ref{coro:3unif_A}]
    All cases for the connected components are covered by Proposition \ref{prop:3unif_A_odd}, Corollary \ref{coro:3unif_A_odd}, Proposition \ref{prop:3unif_A_even} and Corollary \ref{coro:3unif_A_even}, which proves Theorem \ref{theo:3unif_A}. Since computing the reduced hypergraph and testing whether each connected component is a cycle, a prism, an isolated 2-edge or an isolated vertex can clearly be done in polynomial time, Corollary~\ref{coro:3unif_A} ensues.
\end{proof}

\section*{Conclusion and perspectives}\strut
\indent We have completely solved the Avoider-Enforcer game on hypergraphs of rank 2 in all cases, and on linear hypergraphs of rank 3 when Avoider is the last player.

A first line of research would be to solve the case where Avoider is the last player on general hypergraphs of rank 3, without the linearity constraint. The difference with the linear case is that a non-linear path whose endpoints are picked by Avoider is not always winning for Enforcer. In particular, if it has an even total number of vertices, Enforcer will be forced to play in it first, unlike nunchakus. Furthermore, in contrast to the Maker-Breaker convention where all is decided after a constant number of moves \cite{MBrank3}, Avoider and Enforcer may stall by picking ``useless” vertices before a losing structure is created by Avoider. This makes the analysis much harder. Another perspective, still with Avoider playing last, is to look at some linear hypergraphs of higher rank. In particular, the case of linear trees seems accessible using the leaf-edges reduction.

When Enforcer is the last player, it seems difficult to find general results, even for linear hypergraphs of rank 3. Indeed, as discussed in Section \ref{sec:lastmove}, one cannot deduce directly from Theorem \ref{theo:3unif_A} a complete characterisation when Enforcer plays last. Since there are only fews hypergraphs where Avoider wins as last player, Proposition \ref{prop:lastmove} can only be used in very specific circumstances, leaving most cases open. However, when the minimum degree is high enough, the situation often becomes very favorable to Enforcer. Indeed, a consequence of Theorem \ref{theo:3unif_A} is that Avoider only wins playing last if there are vertices of degree 1 or 2. We think that a similar result is true when Enforcer is the last player.

\begin{figure}[h]
    \centering
    \begin{tikzpicture}[scale = 0.7]
        \node[noeud] (a1) at (0,5){};
        \node[noeud] (b1) at (0,4){};
        \node[noeud] (c1) at (0,3){};
        \node[noeud] (d1) at (0,2){};
        \node[noeud] (e1) at (0,1){};
        \node[noeud] (f1) at (0,0){};
        
        \node[noeud] (y) at (2,2.5){};
        \node[left] at (y){$x$};
        
        \node[noeud] (a2) at (4,5){};
        \node[noeud] (b2) at (4,4){};
        \node[noeud] (c2) at (4,3){};
        \node[noeud] (d2) at (4,2){};
        \node[noeud] (e2) at (4,1){};
        \node[noeud] (f2) at (4,0){};

        \draw (-1,0) -- (d1);
        \draw (-1,0) -- (e1);
        \draw (-1,0) -- (f1);
        \draw (-1,1.66) -- (b1);
        \draw (-1,1.66) -- (c1);
        \draw (-1,1.66) -- (f1);
        \draw (-1,3.33) -- (a1);
        \draw (-1,3.33) -- (c1);
        \draw (-1,3.33) -- (e1);
        \draw (-1,5) -- (a1);
        \draw (-1,5) -- (b1);
        \draw (-1,5) -- (d1);

        \draw (5,0) -- (d2);
        \draw (5,0) -- (e2);
        \draw (5,0) -- (f2);
        \draw (5,1.66) -- (b2);
        \draw (5,1.66) -- (c2);
        \draw (5,1.66) -- (f2);
        \draw (5,3.33) -- (a2);
        \draw (5,3.33) -- (c2);
        \draw (5,3.33) -- (e2);
        \draw (5,5) -- (a2);
        \draw (5,5) -- (b2);
        \draw (5,5) -- (d2);

        \draw (a1) -- (2,5.5) -- (a2);
        \draw (2,5.5) to[out=300, in=60] (y);
        \draw (b1) -- (2,4.5) -- (b2);
        \draw (2,4.5) to[out=300, in=60] (y);       
        \draw (c1) -- (2,3.5) -- (c2);
        \draw (2,3.5) to[out=300, in=60] (y);
        \draw (d1) -- (2,1.5) -- (d2);
        \draw (2,1.5) to[out=120, in=240] (y);
        \draw (e1) -- (2,0.5) -- (e2);
        \draw (2,0.5) to[out=120, in=240] (y);       
        \draw (f1) -- (2,-0.5) -- (f2);
        \draw (2,-0.5) to[out=120, in=240] (y);
        
    \end{tikzpicture}
    \caption{This hypergraph has minimum degree 3 and a cut-vertex $x$, but Avoider wins as second-to-last player (note that, when removing $x$, both the left and the right subhypergraphs are prisms).}
    \label{fig:counterexampledegree3}
\end{figure}
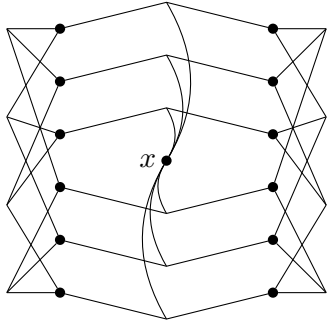

\begin{conjecture}
Let $H$ be a linear hypergraph of rank 3. There exists a constant $d$ such that, if $H$ has minimum degree $d$, then Enforcer wins on $H$ as last player.
\end{conjecture}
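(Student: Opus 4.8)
\medskip
\noindent\textbf{Proof proposal.}
The plan is to use the general machinery of Section~\ref{sec:general} to strip away everything but the core difficulty. By Theorem~\ref{theo:union} it suffices to treat a connected $H$, and by Theorem~\ref{theo:last} it is enough to exhibit a winning strategy for Enforcer \emph{as the last player} (this forces $o(H)=\E$). We may assume $H$ has no $0$- or $1$-edge, since otherwise $o(H)=\E$ trivially; then as soon as $d\ge 2$ the hypergraph has no leaf-edge of size~$3$ and is reduced, and as soon as $d\ge 3$ no connected component can be a cycle, a prism, an isolated $2$-edge or an isolated vertex, so Theorem~\ref{theo:3unif_A} gives $o(H)\in\{\E,\SL\}$. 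The whole task is thus to rule out $o(H)=\SL$, i.e.\ to show Enforcer wins when he plays the last move.

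There is a \emph{sparse regime} in which this is immediate. When Enforcer plays last, the game ends with Avoider having picked exactly $\lfloor |V(H)|/2\rfloor$ vertices, and Avoider wins only if this set contains no edge. Hence, writing $\alpha(H)$ for the largest size of a set of vertices containing no edge, if $\alpha(H)<\lfloor |V(H)|/2\rfloor$ then Enforcer wins by playing completely arbitrarily. So the substance of the conjecture lies entirely in the \emph{dense regime} $\alpha(H)\ge\lfloor |V(H)|/2\rfloor$, where Avoider's target is essentially a maximum independent set and we must show that a large minimum degree keeps her from reaching it.

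For the dense regime the line I would pursue is structural. Fix a maximum independent set $A$ and put $B=V(H)\setminus A$; every edge meets $B$, and by linearity the edges through a fixed vertex $b\in B$ pairwise intersect only in $b$, so the $3$-edges at $b$ whose two other endpoints lie in $A$ form a matching on $A$ — a vertex of $B$ of degree $\delta$ thus contributes $\approx\delta$ disjoint pairs of $A$-vertices. Enforcer's strategy should then be a \emph{spoiling} one: spend the first moves grabbing enough vertices of $A$ that Avoider's quota $\lfloor|V(H)|/2\rfloor$ no longer fits inside what remains of $A$, forcing her to pick some vertex $b\in B$; and then exploit the many edges of $H$ through $b$, whose other endpoints are spread over $A$ along a large matching, to trap her. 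Quantitatively, Avoider survives only if the (already large) subset of $A$ she ends up committed to avoids both endpoints of every pair arising from each of her $B$-vertices, and I would like to show a large enough minimum degree makes this impossible — morally, a roughly half-size independent subset of $A$ cannot be ``matching-independent'' for too many of these matchings at once. The constant $d$ would enter exactly through this counting, and I expect a value of a few units to suffice; when $\alpha(H)$ is much larger than $|V(H)|/2$ the set $B$ is itself large and this same matching density drives the argument, but the bookkeeping is more delicate and is where most of the effort would go.

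The main obstacle, and the reason this remains a conjecture, is twofold. First, there is no recursion to lean on: unlike Maker--Breaker games, one cannot reduce to smaller boards, since Proposition~\ref{prop:lastmove} only yields statements about the \emph{second-to-last} player, not the last, so an inductive proof would first require a new minimum-degree reduction lemma tailored to the last player. Second, the obvious weight-function substitute — an Erd\H{o}s--Selfridge-type potential $\sum_{e\,:\,e\text{ has no Enforcer vertex}}2^{-|e\cap U|}$, with $U$ the set of still-unpicked vertices \cite{AEgames} — is too weak on its own, because a single Enforcer move can destroy $\Omega(d)$ edges and shrink the potential by a constant factor each round; keeping it bounded away from zero to the end would already require exploiting linearity, essentially recovering the structural analysis above. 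I therefore expect the proof to combine a careful description of the hypergraphs with $\alpha(H)\ge\lfloor|V(H)|/2\rfloor$ with a hand-crafted Enforcer strategy, rather than to follow from a single clean inequality.
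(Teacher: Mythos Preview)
This statement is a \emph{conjecture} in the paper, not a theorem; the authors do not prove it, so there is no paper proof to compare against. Your preliminary reductions are sound: passing to connected components via Theorem~\ref{theo:union}, reducedness for $d\ge 2$, and the exclusion of cycles, prisms, isolated $2$-edges and isolated vertices for $d\ge 3$ via Theorem~\ref{theo:3unif_A} are all correct, as is the independence-number observation that Enforcer wins trivially whenever $\alpha(H)<\lfloor|V(H)|/2\rfloor$. (One small redundancy: the conjecture already asks only for Enforcer winning as last player, so Theorem~\ref{theo:last} is not needed to set up the problem; it only tells you that a proof would in fact yield $o(H)=\E$.)

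Beyond that, what you present is a research outline rather than a proof, and you say so yourself. The ``spoiling'' strategy and the matching-density heuristic in the dense regime are not made precise: you do not specify which $A$-vertices Enforcer grabs, you do not control \emph{which} vertex $b\in B$ Avoider is forced to take (she may choose one whose associated matching on $A$ is already largely broken by Enforcer's earlier picks), and you do not explain how the local structure around $b$ is converted into an actual win---in this paper the only known winning certificates for Enforcer as last player are nunchakus and disjoint unions of $\SL$-pieces, and your argument never produces one. The paper's own partial evidence is of a different and more concrete flavour: it shows $d=3$ is insufficient (the two-prisms-plus-hub hypergraph of Figure~\ref{fig:counterexampledegree3} has minimum degree~$3$ yet Avoider wins as second-to-last player), and it verifies the conjecture for $d=4$ \emph{in the presence of a cut vertex} by a one-line argument (by linearity each side of the cut still has minimum degree~$\ge 3$, hence outcome $\SL$ or $\E$ by Theorem~\ref{theo:3unif_A}, and two such pieces combine to $\E$ by Theorem~\ref{theo:union}, after which monotonicity finishes). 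The genuinely open case is therefore highly connected $H$ with minimum degree~$\ge 4$, and neither your outline nor the paper supplies a mechanism there.
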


This conjecture is true for $d=4$ when $H$ has a cut-vertex $x$. Indeed, under these assumptions, $H^{-x}$ has at least two connected components of minimum degree 3. Since Avoider loses as last player on each of these two components, Enforcer wins on their union as last player, so Enforcer wins on $H$ as last player by monotonicity. Note that the conjecture is not true for $d=3$. Indeed, consider the hypergraph $H$ from Figure \ref{fig:counterexampledegree3}, obtained by taking two disjoint prisms and adding six 3-edges that pairwise connect the vertices of both prisms with a common additional vertex $x$. It has minimum degree 3 and a cut vertex $x$. Since $H^{-x}$ is the disjoint union of two prisms, each of which has outcome $\A$ by Theorem \ref{theo:3unif_A}, Avoider wins on $H^{-x}$ as last player. By Proposition \ref{prop:lastmove}, Avoider thus wins on $H$ as second-to-last player.

Finally, we note that, for linear hypergraphs of rank 3 that are sufficiently connected, two disjoint nunchakus should appear after a small number of moves, thus guaranteeing that Enforcer wins as last player.

%\begin{figure}
%    \centering
%    \includegraphics[width=6cm]{contreexpemple.jpg}
%    \caption{This hypergraph has minimum degree 3, a cut-vertex (vertex $y$) but is winning for Avoider playing second-to-last.}
%    \label{fig:counterexampledegree3}
%\end{figure}

\bibliographystyle{abbrv}

\bibliography{biblio}

\begin{thebibliography}{10}

\bibitem{FastAE}
J.~Barát and M.~Stojaković.
\newblock {On winning fast in Avoider-Enforcer games}.
\newblock {\em The electronic journal of combinatorics}, 17:R56, 10 2009.

\bibitem{beck}
J.~Beck.
\newblock {\em Combinatorial Games: Tic-Tac-Toe Theory}.
\newblock Encyclopedia of Mathematics and its Applications. Cambridge
  University Press, 2008.

\bibitem{HgameAE}
M.~Bednarska-Bzdega, O.~Ben-Eliezer, L.~Gishboliner, and T.~Tran.
\newblock {On the separation conjecture in Avoider–Enforcer games}.
\newblock {\em Journal of Combinatorial Theory, Series B}, 138:41--77, 2019.

\bibitem{berge}
C.~Berge.
\newblock {\em Graphs and Hypergraphs}, volume~6 of {\em North-Holland
  Mathematical Library}.
\newblock Elsevier, 1973.

\bibitem{bookhypergraph}
A.~Bretto.
\newblock {\em Hypergraph Theory: an Introduction}.
\newblock Springer Publishing Company, Incorporated, 2013.

\bibitem{Chvatal}
V.~Chvátal and P.~Erdös.
\newblock Biased positional games.
\newblock In {\em Algorithmic Aspects of Combinatorics}, volume~2 of {\em
  Annals of Discrete Mathematics}, pages 221--229. Elsevier, 1978.

\bibitem{acyclicAE}
D.~Clemens, J.~Ehrenm{\"u}ller, Y.~Person, and T.~Tran.
\newblock {Keeping Avoider's graph almost acyclic}.
\newblock {\em Preprint at arXiv:1403.1482}, 2014.

\bibitem{erdos}
P.~Erdös and J.~Selfridge.
\newblock {On a combinatorial game}.
\newblock {\em Journal of Combinatorial Theory}, 14, 1973.

\bibitem{colpspace2}
S.~A. Fenner, D.~Grier, J.~Messner, L.~Schaeffer, and T.~Thierauf.
\newblock Game values and computational complexity: An analysis via black-white
  combinatorial games.
\newblock In {\em Algorithms and Computation}, pages 689--699, Berlin,
  Heidelberg, 2015. Springer Berlin Heidelberg.

\bibitem{AEdisjoint}
A.~Ferber, M.~Krivelevich, and A.~Naor.
\newblock {Avoider–Enforcer games played on edge disjoint hypergraphs}.
\newblock {\em Discrete Mathematics}, 313(24):2932--2941, 2013.

\bibitem{MBrank3}
F.~Galliot, S.~Gravier, and I.~Sivignon.
\newblock {Maker-Breaker is solved in polynomial time on hypergraphs of rank
  3}.
\newblock {\em Preprint at arXiv:2209.12819}, 2022.

\bibitem{gledeloijid}
V.~Gledel and N.~Oijid.
\newblock {Avoidance games are PSPACE-complete}.
\newblock In {\em 40th International Symposium on Theoretical Aspects of
  Computer Science (STACS 2023)}, volume 254 of {\em Leibniz International
  Proceedings in Informatics (LIPIcs)}, pages 34:1--34:19, Dagstuhl, Germany,
  2023. Schloss Dagstuhl -- Leibniz-Zentrum f{\"u}r Informatik.

\bibitem{starAE}
A.~Grzesik, M.~Mikala{\v c}ki, Z.~L. Nagy, A.~Naor, B.~Patk{\'o}s, and
  F.~Skerman.
\newblock {Avoider-Enforcer star games}.
\newblock {\em {Discrete Mathematics and Theoretical Computer Science}}, Vol.
  17 no. 1(1):145--160, Mar. 2015.
\newblock Combinatorics.

\bibitem{lineartree}
A.~Gyárfás, E.~Győri, and M.~Simonovits.
\newblock On $3$-uniform hypergraphs without linear cycles.
\newblock {\em Journal of Combinatorics}, 7, 12 2014.

\bibitem{Hales1963}
R.~Hales and A.~Jewett.
\newblock Regularity and positional games.
\newblock {\em Trans. Am. Math. Soc}, 106:222--229, 1963.

\bibitem{positionalgames}
D.~Hefetz, M.~Krivelevich, M.~Stojakovic, and T.~Szabó.
\newblock {\em {Positional Games}}, volume~44 of {\em Oberwolfach seminars}.
\newblock Springer, 2014.

\bibitem{Hefetz2007}
D.~Hefetz, M.~Krivelevich, M.~Stojaković, and T.~Szabó.
\newblock Fast winning strategies in positional games.
\newblock {\em Electronic Notes in Discrete Mathematics}, 29:213--217, 2007.
\newblock European Conference on Combinatorics, Graph Theory and Applications.

\bibitem{AEgames}
D.~Hefetz, M.~Krivelevich, and T.~Szabó.
\newblock {Avoider–Enforcer games}.
\newblock {\em Journal of Combinatorial Theory, Series A}, 114(5):840--853,
  2007.

\bibitem{MBrank5}
F.~O. Koepke.
\newblock {Solving Maker-Breaker Games on 5-uniform hypergraphs is
  PSPACE-complete}.
\newblock {\em Preprint at arXiv:2502.20271}, 2025.

\bibitem{kutz}
M.~Kutz.
\newblock Weak positional games on hypergraphs of rank three.
\newblock {\em Discrete Mathematics \& Theoretical Computer Science}, DMTCS
  Proceedings vol. AE, European Conference on Combinatorics, Graph Theory and
  Applications (EuroComb '05), Jan 2005.

\bibitem{Lu}
X.~Lu.
\newblock Hamiltonian games.
\newblock {\em Journal of Combinatorial Theory, Series B}, 55(1):18--32, 1992.

\bibitem{milos2}
T.~Miltzow and M.~Stojakovic.
\newblock {Avoider-Enforcer game is NP-hard}.
\newblock {\em Available at arxiv.org/abs/2208.06687}, 2022.

\bibitem{oijid2024}
N.~Oijid.
\newblock {\em Complexity of positional games on graphs}.
\newblock {PhD thesis}, {Universit\'e lyon 1}, 2024.
\newblock Available at \url{http://www.theses.fr/2024LYO10113}.

\bibitem{Rahman20206UniformMG}
M.~L. Rahman and T.~Watson.
\newblock {6-Uniform Maker-Breaker game Is PSPACE-complete}.
\newblock {\em Electron. Colloquium Comput. Complex.}, TR20, 2020.

\bibitem{schaefer}
T.~J. Schaefer.
\newblock {On the Complexity of Some Two-Person Perfect-Information Games}.
\newblock {\em Journal of computer and system Sciences}, 16:185--225, 1978.

\bibitem{milos1}
M.~Stojaković and J.~Stratijev.
\newblock On strong avoiding games.
\newblock {\em Discrete Mathematics}, 346(3):113270, 2023.

\end{thebibliography}

\end{document}